\theoremstyle{definition}
\newtheorem{defin}{Definition}[section]
\newtheorem{ex}[defin]{Example}
\theoremstyle{plain}
\newtheorem{theo}[defin]{Theorem}
\newtheorem{lemma}[defin]{Lemma}
\newtheorem{obs}[defin]{Remark}
\newtheorem{prop}[defin]{Proposition}
\newtheorem{cor}[defin]{Corollary}
\newtheorem{theorem}{Theorem}
\newtheorem*{theorem-no}{Theorem}
\numberwithin{equation}{section}
\renewenvironment{abstract}
{\par\noindent\textbf{\abstractname.}\ \ignorespaces}
{\par\medskip}
\title{Continuity of critical exponent of quasiconvex-cocompact groups under Gromov-Hausdorff convergence}
\author{Nicola Cavallucci}
\date{}
\begin{document}
	\maketitle
	\begin{abstract}
		\footnotesize
		We show continuity under equivariant Gromov-Hausdorff convergence of the critical exponent of discrete, non-elementary, torsion-free, quasiconvex-cocompact groups with uniformly bounded codiameter acting on uniformly Gromov-hyperbolic metric spaces.
	\end{abstract}

\tableofcontents

\section{Introduction}
\label{sec-introduction}
The \emph{critical exponent} of a discrete group of isometries $\Gamma$ of a proper metric space $X$ is defined as 
\begin{equation*}
	\label{intro-critical-exponent}
	h_\Gamma := \limsup_{T\to +\infty}\frac{1}{T}\log \Gamma x \cap \overline{B}(x,T),
\end{equation*}
where $x$ is any point of $X$. In \cite{Cav21bis} the author proved that if $X$ is a Gromov-hyperbolic space then the limit superior above is a true limit (see also Lemma \ref{entropy-critical}). \vspace{2mm}

\noindent A discrete group $\Gamma$ of isometries of a proper, $\delta$-hyperbolic metric space is said quasiconvex-cocompact if it acts cocompactly on the quasiconvex-hull of its limit set $\Lambda(\Gamma)$, namely QC-Hull$(\Lambda(\Gamma))$. In this case the codiameter is by definition the diameter of the quotient metric space $\Gamma\backslash$QC-Hull$(\Lambda(\Gamma))$.\\
In the sequel we denote by $\mathcal{M}(\delta, D)$ the class of triples $(X,x,\Gamma)$ where $X$ is a proper, $\delta$-hyperbolic metric space, $\Gamma$ is a discrete, torsion-free, non-elementary, quasiconvex-cocompact group of isometries of $X$ with codiameter $\leq D$ and $x$ is a point of QC-Hull$(\Lambda(\Gamma))$.
We refer to Section \ref{sec-Gromov} for the details of all these definitions. \\
We are interested in convergence of sequences of triples in $\mathcal{M}(\delta,D)$ in the equivariant pointed Gromov-Hausdorff sense, as defined by Fukaya in \cite{Fuk86}. It is a version of the classical pointed Gromov-Hausdorff convergence that consider also the groups acting on the spaces. Its precise definition is recalled in Section \ref{sec-convergence}. Our main result is:

\begin{theorem}
	\label{theorem-main}
	Let $\delta, D\geq 0$ and let $(X_n,x_n,\Gamma_n)_{n\in\mathbb{N}} \subseteq \mathcal{M}(\delta,D)$. If the sequence $(X_n,x_n,\Gamma_n)$ converges in the equivariant pointed Gromov-Hausdorff sense to $(X_\infty, x_\infty, \Gamma_\infty)$ then 
	\begin{itemize}
		\item[(i)] $(X_\infty, x_\infty, \Gamma_\infty) \in \mathcal{M}(\delta,D)$ and
		\item[(ii)] $h_{\Gamma_\infty} = \lim_{n\to +\infty}h_{\Gamma_n}$.
	\end{itemize}
\end{theorem}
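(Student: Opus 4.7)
The plan is to treat the two statements separately, establishing (i) first so that the systole-type estimates it produces can be used in (ii).

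\smallskip

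\emph{Part (i).} I would verify each axiom of $\mathcal{M}(\delta,D)$ in the limit. Hyperbolicity with the same constant $\delta$ is preserved because the four-point condition involves only finitely many points and can be tested on an $\varepsilon$-dense net; properness likewise follows from the equivariant $\varepsilon_n$-approximations. The codiameter bound transfers directly: any point of $\text{QC-Hull}(\Lambda(\Gamma_\infty))$ near $x_\infty$ lies within $D + \varepsilon_n$ of a $\Gamma_n$-translate of $x_n$, and in the limit within $D$ of a $\Gamma_\infty$-translate of $x_\infty$. The delicate step is showing that $\Gamma_\infty$ is discrete, torsion-free and non-elementary. For discreteness and torsion-freeness I expect to use a Margulis/systole estimate valid for torsion-free quasiconvex-cocompact actions on $\delta$-hyperbolic spaces with bounded codiameter, yielding a uniform lower bound $s = s(\delta,D) > 0$ on the displacement at $x_n$ of any non-trivial element of $\Gamma_n$; this bound is inherited in the limit and rules out the appearance of small or torsion elements. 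Non-elementarity is obtained by transporting two loxodromic elements of $\Gamma_n$ with disjoint axes (which exist because $\Gamma_n$ is non-elementary) to $\Gamma_\infty$ via the equivariant approximation: stability of the ping-pong configuration in hyperbolic geometry guarantees that the limit pair still generates a rank-2 free subgroup.

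\smallskip

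\emph{Part (ii).} Write $N_\Gamma(x,T) := \#\bigl(\Gamma x \cap \overline{B}(x,T)\bigr)$ and recall from Lemma \ref{entropy-critical} that $h_\Gamma$ is a true limit. The strategy is to sandwich
\[
N_{\Gamma_n}(x_n, T-\varepsilon_n) \;\leq\; N_{\Gamma_\infty}(x_\infty, T) \;\leq\; N_{\Gamma_n}(x_n, T+\varepsilon_n)
\]
for every $T$ up to a scale $R_n \to \infty$. The equivariant $(\varepsilon_n, R_n)$-approximation provides a partial map sending orbit points of $x_n$ inside $\overline{B}(x_n, R_n)$ to orbit points of $x_\infty$ in $\overline{B}(x_\infty, R_n + \varepsilon_n)$ and distorting distances by at most $\varepsilon_n$; injectivity of both directions is ensured once $\varepsilon_n < s/2$, where $s$ is the uniform systole from Part (i). Taking logarithms, dividing by $T$, and letting $T = T_n \to \infty$ slowly enough that $T_n + \varepsilon_n \leq R_n$, one obtains both $\limsup_{n} h_{\Gamma_n} \leq h_{\Gamma_\infty}$ and $\liminf_n h_{\Gamma_n} \geq h_{\Gamma_\infty}$.

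\smallskip

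\emph{Main obstacle.} The principal difficulty is that equivariant Gromov-Hausdorff convergence only controls the action inside balls of bounded radius, whereas the critical exponent sees the orbit at infinity. Converting the local comparison into the asymptotic sandwich above requires, on the one hand, the uniform systole provided by Part (i) (without which the partial correspondence between orbits need not be injective at scales that affect the exponential count) and, on the other hand, a quantitative diagonal argument matching the approximation scales $\varepsilon_n, R_n$ against the speed at which the orbit counting functions stabilise. I therefore expect the existence of the limit in Lemma \ref{entropy-critical} to be used in a uniform form across the sequence, possibly via an a priori upper bound on $h_{\Gamma_n}$ depending only on $\delta$ and $D$.
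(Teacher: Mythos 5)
Your outline has the right overall shape, but at several points it asserts exactly the statements whose proofs constitute the paper's actual content, and two of these assertions are not correct as you state them. In Part (i), the systole bound you invoke does not exist in the form $s=s(\delta,D)$: a free group of rank $k$ acting on the $2k$-regular tree rescaled by $1/m$ is a torsion-free, quasiconvex-cocompact, $0$-hyperbolic example with codiameter $\leq 1$ and systole $1/m$, so any lower bound on the systole must also involve an upper bound $H$ on the critical exponent (this is Proposition \ref{prop-bound-systole}). That upper bound is not ``a priori in terms of $\delta$ and $D$'' either (same example, with $m=1$ and $k\to\infty$); it has to be extracted from the convergence hypothesis itself, via Gromov precompactness giving uniform packing of the hulls and hence $\sup_n h_{\Gamma_n}<+\infty$ (Lemma \ref{lemma-uniform-packing-convergence}, Corollary \ref{cor-entropy-bound-convergence}). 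Separately, your one-line treatment of the codiameter assumes that $\textup{QC-Hull}(\Lambda(\Gamma_\infty))$ is the limit of the sets $\textup{QC-Hull}(\Lambda(\Gamma_n))$; identifying $\Lambda(\Gamma_\infty)$ with the limit of the $\Lambda(\Gamma_n)$ is one of the two difficulties the paper singles out, and it is handled there by constructing an embedding of the limit of the boundaries into $\partial X_\infty$ (Proposition \ref{boundary-convergence}) and invoking minimality of the limit set (Proposition \ref{prop-minimality-limit-set}). Your ping-pong argument for non-elementarity has the same uniformity problem: the loxodromic pairs in $\Gamma_n$ must be chosen with uniformly controlled displacement and uniformly separated endpoints, which is not automatic; the paper instead argues by contradiction, counting short elements and using that $\Sigma_{2D+72\delta}(x)$ generates (Lemma \ref{lemma-generating-set}) to force $\Gamma_n$ to be cyclic for large $n$.

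The more serious gap is in Part (ii). The sandwich $N_{\Gamma_n}(x_n,T-\varepsilon_n)\leq N_{\Gamma_\infty}(x_\infty,T)\leq N_{\Gamma_n}(x_n,T+\varepsilon_n)$ at a finite scale $T$ does not by itself yield convergence of the exponents: to pass from a comparison of counting functions at scale $T_n\to\infty$ to a comparison of growth rates you must know that $\frac{1}{T}\log N_{\Gamma_n}(x_n,T)$ approaches $h_{\Gamma_n}$ at a rate that is uniform in $n$, and Lemma \ref{entropy-critical} gives no such uniformity. Your diagonal argument with $T_n\to\infty$ ``slowly'' only shows $\frac{1}{T_n}\log N_{\Gamma_n}(x_n,T_n)\to h_{\Gamma_\infty}$, which says nothing about $h_{\Gamma_n}$ unless the rate of stabilisation is controlled. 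You correctly name this as the obstacle but do not supply the ingredient that overcomes it. In the paper that ingredient is Theorem \ref{theo-uniform-distribution}: a two-sided bound $\frac{1}{K}e^{Th_{\Gamma_n}}\leq N_{\Gamma_n}(x_n,T)\leq Ke^{Th_{\Gamma_n}}$ with $K$ independent of $n$ and of $T$, proved by quantifying Coornaert's equidistribution through Patterson--Sullivan measures, a uniform Ahlfors regularity of the limit sets (Theorem \ref{cocompactness}) and shadow estimates. With that in hand a single well-chosen scale $T\geq\max\{T_\varepsilon,\log(Ke)/\varepsilon\}$ suffices to get $|h_{\Gamma_n}-h_{\Gamma_\infty}|\leq 2\varepsilon$. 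Without it, or some equivalent uniform counting estimate, your argument for (ii) does not close.
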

The first difficulty in the proof of (i) is to show that the limit group $\Gamma_\infty$ is discrete. The proof is based on a result of \cite{BCGS2}: if $(X,x,\Gamma)\in \mathcal{M}(\delta,D)$ satisfies $h_\Gamma \leq H < +\infty$, then the global systole of $\Gamma$ is bigger than some positive constant depending only on $\delta,D$ and $H$ (cp. Proposition \ref{prop-bound-systole}).
This is a powerful tool when used together with Corollary \ref{cor-entropy-bound-convergence}: under the assumptions of Theorem \ref{theorem-main} the critical exponents of the groups $\Gamma_n$ are uniformly bounded above by some $H<+\infty$.
All the assumptions on the class $\mathcal{M}(\delta,D)$ are necessary in order to get the discreteness of the limit group, see Section \ref{subsec-counterexamples}. The second difficulty is to show that $\Gamma_\infty$ is quasiconvex-cocompact. In order to do that we will show that the limit of the Gromov boundaries $\partial X_n$ can be seen as a canonical subset of $\partial X_\infty$, Proposition \ref{boundary-convergence}. Under this identification the limit of the sets $\Lambda(\Gamma_n)$ coincides with $\Lambda(\Gamma_\infty)$.\vspace{2mm}

\noindent The proof of the continuity statement, Theorem \ref{theorem-main}.(ii), is based on the following uniform equidistribution of the orbits. It is a quantified version of a result of Coornaert, \cite{Coo93}.
\begin{theorem}
	\label{theo-uniform-distribution}
	Under the assumptions of Theorem \ref{theorem-main} there exists $K > 0$ such that for every $n$ and for every $T\geq 0$ it holds
	$$\frac{1}{K}\cdot e^{T\cdot h_{\Gamma_n}} \leq \Gamma_n x_n \cap \overline{B}(x_n,T) \leq K\cdot e^{T\cdot h_{\Gamma_n}}.$$
\end{theorem}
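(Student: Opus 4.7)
The plan is to prove the following stronger, non-sequential statement: for every $\delta, D, H \geq 0$ there exists $K = K(\delta, D, H) > 0$ such that for every triple $(X, x, \Gamma) \in \mathcal{M}(\delta, D)$ with $h_\Gamma \leq H$ and every $T \geq 0$,
\[
\frac{1}{K}\, e^{T h_\Gamma} \;\leq\; \#\bigl(\Gamma x \cap \overline{B}(x, T)\bigr) \;\leq\; K\, e^{T h_\Gamma}.
\]
Theorem \ref{theo-uniform-distribution} follows at once, because Corollary \ref{cor-entropy-bound-convergence} supplies a uniform upper bound $H$ on the $h_{\Gamma_n}$ along the sequence. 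What must be done is essentially Coornaert's theorem \cite{Coo93}, with explicit control of its constants in terms of $\delta, D, H$.

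For each triple I would first construct, via the Poincar\'e-series renormalization at the critical exponent, a Patterson--Sullivan measure $\mu$ of dimension $h_\Gamma$ supported on $\Lambda(\Gamma)$, normalized to have unit total mass and transforming under $\Gamma$ by the Busemann cocycle. The central technical step is then a uniform shadow lemma: there exist $R = R(\delta)$ and $C = C(\delta, D, H)$ such that
\[
\frac{1}{C}\, e^{-h_\Gamma d(x, \gamma x)} \;\leq\; \mu\bigl(\mathcal{O}_x(\gamma x, R)\bigr) \;\leq\; C\, e^{-h_\Gamma d(x, \gamma x)} \qquad \forall\, \gamma \in \Gamma,
\]
where $\mathcal{O}_x(y, R)$ is the shadow seen from $x$. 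The upper bound is essentially formal from hyperbolicity and the control of the cocycle; the lower bound is the delicate one and says that from any basepoint in the hull one sees a portion of $\Lambda(\Gamma)$ of definite $\mu$-mass, which is precisely where the codiameter assumption enters.

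Granted the uniform shadow lemma, the counting is a standard packing-covering argument. The shadows of the orbit points with $d(x, \gamma x) \in [T-1, T]$ have overlap multiplicity bounded in terms of $\delta$ and $R$ only, and by cocompactness with codiameter $\leq D$ they cover $\Lambda(\Gamma)$. Comparing their total $\mu$-mass to $\mu(\Lambda(\Gamma)) = 1$, and using both inequalities of the shadow lemma, one obtains
\[
\frac{1}{C'}\, e^{T h_\Gamma} \;\leq\; \#\{\gamma \in \Gamma : d(x, \gamma x) \in [T-1, T]\} \;\leq\; C'\, e^{T h_\Gamma},
\]
with $C' = C'(\delta, D, H)$. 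A geometric summation yields the desired two-sided bound for $T$ larger than some $T_0(\delta, D, H)$, and the finitely many smaller values of $T$ can be absorbed into $K$ using the trivial lower bound $\#(\Gamma x \cap \overline{B}(x, T)) \geq 1$.

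The main obstacle is the lower bound in the uniform shadow lemma, that is, ruling out that $\mu$ concentrates on regions of the boundary which are shadow-far from the basepoint, with an estimate depending only on $\delta, D, H$. Qualitatively the uniform codiameter $D$ prevents this, since every point of the hull is within distance $D$ of the orbit and therefore sees a full-sized piece of $\Lambda(\Gamma)$, but turning this into a quantitative estimate independent of the triple is where the real work lies. A secondary technical point is that the shadow parameter $R$ must be fixed once and for all in terms of $\delta$, which is possible because every $X$ in $\mathcal{M}(\delta, D)$ has the same hyperbolicity constant.
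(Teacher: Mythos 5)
Your architecture is exactly the paper's: a Patterson--Sullivan measure, a shadow lemma with constants controlled by $(\delta,D,H)$, a bounded-multiplicity covering of $\Lambda(\Gamma)$ by shadows of orbit points in an annulus, and a geometric summation. The paper packages the two sides of your shadow lemma as the uniform visual Ahlfors regularity of $\mu_{\mathrm{PS}}$ (Theorem \ref{cocompactness}): its Step~2 is your lower shadow bound and is used for the \emph{upper} orbit count, while the \emph{lower} orbit count is extracted from the upper ball-mass bound of its Step~1 via a packing of $\Lambda(\Gamma_n)$ by generalized visual balls (Lemma \ref{packing*}) rather than from the upper shadow bound directly --- an equivalent reformulation.

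The genuine gap is that the step you yourself flag as ``where the real work lies,'' the uniform lower shadow bound, is asserted rather than proved, and your stated heuristic for it (every hull point is within $D$ of the orbit, hence sees a definite piece of $\Lambda(\Gamma)$) is not the mechanism that closes it; that heuristic is actually what drives the \emph{upper} bound $\mu(B(z,\rho))\leq e^{h_\Gamma(55\delta+3D)}\rho^{h_\Gamma}$, where one re-centers the Poincar\'e series at an orbit point near $\xi_{x,z}(\log\frac1\rho)$. The paper's actual argument for the lower shadow bound combines two further ingredients you do not mention: (a) the upper ball bound forces every generalized visual ball of radius $\rho_0(\delta,D,H)$ to have mass $\leq\frac12$, and since $\partial X\setminus g\bigl(\mathrm{Shad}_x(g^{-1}x,R)\bigr)$ is contained in such a ball for $R\geq R_0$, the pulled-back shadow has mass $\geq\frac12$; (b) the $\Gamma$-quasiconformality of $\mu_{\mathrm{PS}}$ with a constant $Q=Q(\delta,H)$ (Proposition \ref{patterson-sullivan}) together with the $1$-Lipschitz Busemann function then converts this into $\mu(\mathrm{Shad}_x(gx,R))\geq\frac{1}{2Q}e^{-h_\Gamma d(x,gx)}$. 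Without (b) in particular your plan does not close. A secondary caveat: your claimed triple-wise statement with $K=K(\delta,D,H)$ also needs the overlap multiplicity of shadows (equivalently, $\#\Gamma x\cap\overline{B}(y,4R_0+1)$) bounded purely in terms of $(\delta,D,H)$; the paper instead bounds this constant $N$ along the convergent sequence by Gromov precompactness (Lemma \ref{lemma-uniform-packing-convergence}), which suffices for Theorem \ref{theo-uniform-distribution} but means your stronger version requires an extra input such as the Bishop--Gromov estimate of \cite{BCGS} already used for Proposition \ref{prop-bound-systole}.
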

In literature the behaviour of the critical exponent under another kind of convergence, the algebraic convergence, was previously studied by Bishop and Jones in case of hyperbolic manifolds (\cite{BJ97}) and in a more general setting by Paulin in \cite{Pau97}. The definition of algebraic convergence, as well as the notations below, are recalled in Section \ref{subsec-algebraic}. They proved:
\begin{theorem-no}[\cite{BJ97} and \cite{Pau97}, BJP-Theorem]
	\label{theorem-BJP}
	Let $X$ be a geodesic, $\delta$-hyperbolic metric space such that for each $x,y\in X$ there is a geodesic ray issuing from $x$ passing at distance $\leq \delta$ from $y$ and let $G$ be a finitely generated group. Let $\varphi_n,\varphi_\infty\colon G \to \textup{Isom}(X)$ be homomorphisms. If $\varphi_n(G)$ converges algebraically to $\varphi_\infty(G)$, if $\varphi_n(G)$, $\varphi_\infty(G)$ are discrete and if $\varphi_\infty(G)$ has no global fixed point at infinity, then $h_{\varphi_\infty(G)} \leq \liminf_{n\to+\infty}h_{\varphi_n(G)}$.
\end{theorem-no}
\noindent We point out here the main differences and analogies between this statement and Theorem \ref{theorem-main}.(ii): 
\begin{itemize}
	\item[-] in BJP-Theorem the isomorphism type of the group $G$ is fixed. A posteriori this is not restrictive: under the assumptions of Theorem \ref{theorem-main} the isomorphism type of the groups $\Gamma_n$ is eventually constant (Corollary \ref{cor-isomorphic-constant}). The proof of Theorem \ref{theorem-main}.(i) does not use this property.
	\item[-] The algebraic limit is a priori different to the equivariant pointed Gromov-Hausdorff limit (see Example \ref{ex-algebraic-GH}). However if the spaces $X_n$ are all isometric and satisfy the assumptions of Theorem \ref{theorem-main} then the two limits coincide (Theorem \ref{theo-ultralimit-to-algebraic}). 
	\item[-] In Theorem \ref{theorem-main} the spaces $X_n$ can be pairwise non-isometric. In that case the notion of algebraic convergence cannot be defined. This is a posteriori the main difference between algebraic convergence and equivariant pointed Gromov-Hausdorff convergence on the class $\mathcal{M}(\delta,D)$.
\end{itemize}

\section{Preliminaries on metric spaces}
\label{sec-preliminaries}
Throughout the paper $X$ will denote a metric space and $d$ will denote the metric on $X$. The open (resp.closed) ball of radius $r$ and center $x$ is denoted by $B(x,r)$ (resp. $\overline{B}(x,r)$). 
A geodesic segment is an isometry $\gamma\colon I \to X$ where $I=[a,b]$ is a a bounded interval of $\mathbb{R}$. The points $\gamma(a), \gamma(b)$ are called the endpoints of $\gamma$. A metric space $X$ is said geodesic if for all couple of points $x,y\in X$ there exists a geodesic segment whose endpoints are $x$ and $y$. We will denote any geodesic segment between two points $x$ and $y$, with an abuse of notation, as $[x,y]$. A geodesic ray is an isometry $\gamma\colon[0,+\infty)\to X$ while a geodesic line is an isometry $\gamma\colon \mathbb{R}\to X$. \vspace{2mm}

\noindent The group of isometries of a proper (i.e. closed balls are compact) metric space $X$ is denoted by Isom$(X)$ and it is endowed with the topology of uniform convergence on compact subsets of $X$. \\
If $\Gamma$ is a subgroup of Isom$(X)$ we define $\Sigma_R(\Gamma, x) := \lbrace g\in \Gamma \text{ s.t. } d(x,gx)\leq R\rbrace$ and $\Gamma_R(x) := \langle \Sigma_R(\Gamma, x) \rangle$, for every $x\in X$ and $R\geq 0$. When the context is clear we simply write $\Sigma_R(x)$. A subgroup $\Gamma$ is said to be discrete if equivalently:
\begin{itemize}
	\item[(i)] it is a discrete subspace of Isom$(X)$;
	\item[(ii)] $\#\Sigma_R(x) < +\infty$ for all $x\in X$ and all $R\geq 0$. 
\end{itemize}
The \emph{systole} of $\Gamma$ at $x\in X$ is the quantity 
$$\text{sys}(\Gamma,x) := \inf\lbrace d(x,gx) \text{ s.t. } g\in \Gamma\setminus\lbrace \text{id}\rbrace\rbrace,$$
while the \emph{global systole} of $\Gamma$ is $\text{sys}(\Gamma,X) := \inf_{x\in X}\text{sys}(\Gamma,x).$
If any non-trivial isometry of $\Gamma$ has no fixed points then the systole at a point is always strictly positive by discreteness.
%\vspace{2mm}
%
%\noindent We state now the Margulis Lemma as proved in \cite{BGT11}.
%\begin{prop}[Corollary 11.17 and Corollary 11.2 of \cite{BGT11}]
%	\label{prop-margulis-lemma}
%	For every $C>0$ there exist $\varepsilon_0, I_0, S_0 > 0$ such that for every proper metric space $X$ with $\textup{Cov}_X(4,1) \leq C$ for every $x\in X$ and for every $\varepsilon < \varepsilon_0$ the following is true: there exist normal subgroups $\Lambda, \Delta \vartriangleleft \Gamma_\varepsilon(x)$ such that:
%	\begin{itemize}
%		\item[(i)] $\Lambda$ is a finite normal subgroup of $\Delta$;
%		\item[(ii)] $[\Gamma : \Delta] \leq I_0$;
%		\item[(iii)] $\Delta / \Lambda$ is nilpotent of step $\leq  S_0$. 
%	\end{itemize}
%\end{prop}
%We end this preliminary section recalling a property of nilpotent groups.
%\begin{lemma}
%	\label{lemma-finite-order-nilpotent}
%	Let $G$ be a finitely generated nilpotent group whose elements have all finite order. Then $G$ is finite.
%\end{lemma}
%\begin{proof}
%	We proceed by induction on the nilpotent step $S$.
%	If $S=0$ then $G$ is abelian and the result is trivial. In the general case we consider the group $[G,G]$ which is nilpotent of step $S-1$, finitely generated and with all elements of finite order. Therefore by induction $[G,G]$ is finite. Moreover the group $G/[G,G]$ is abelian and made by finite order elements, so it is finite. It is then clear that $G$ itself is finite.
%\end{proof}
%
%
%
%

\section{Convergence of group actions}
\label{sec-convergence}
First of all we recall the definition and the properties of the equivariant pointed Gromov-Hausdorff convergence. After that we compare this notion with ultralimit convergence. 
\subsection{Equivariant pointed Gromov-Hausdorff convergence}
We consider triples $(X,x,\Gamma)$ where $(X,x)$ is a pointed, proper metric space and $\Gamma< \text{Isom}(X)$. The following definitions are due to Fukaya, \cite{Fuk86}.
\begin{defin}
	Let $(X,x,\Gamma), (Y,y,\Lambda)$ be two triples as above and $\varepsilon > 0$. An equivariant $\varepsilon$-approximation between them is a triple $(f,\varphi,\psi)$ where 
	\begin{itemize}
		\item $f \colon B(x, \frac{1}{\varepsilon}) \to B(y,\frac{1}{\varepsilon})$ is a map such that
		%which is a $\varepsilon$-quasi-surjective $(1,\varepsilon)$-quasi-isometry, i.e.
		\begin{itemize}
			\item[-] $f(x)=y$;
			\item[-] $\vert d(f(x_1), f(x_2)) - d(x_1, x_2)\vert <\varepsilon$ for every $x_1,x_2\in B(x,\frac{1}{\varepsilon})$;
			\item[-] for every $y_1\in B(y, \frac{1}{\varepsilon})$ there exists $x_1\in B(x,\frac{1}{\varepsilon})$ such that \linebreak $d(f(x_1),y_1) < \varepsilon$;
		\end{itemize}
		\item $\varphi\colon \Sigma_{\frac{1}{\varepsilon}}(\Gamma, x) \to \Sigma_{\frac{1}{\varepsilon}}(\Lambda, y)$ is a map satisfying $d(f(gx_1), \varphi(g) f(x_1)) <\varepsilon$ for every $g\in \Sigma_{\frac{1}{\varepsilon}}(\Gamma, x)$ and every $x_1\in B(x, \frac{1}{\varepsilon})$ such that also $gx_1 \in B(x, \frac{1}{\varepsilon})$;
		\item $\psi\colon \Sigma_{\frac{1}{\varepsilon}}(\Lambda, y) \to \Sigma_{\frac{1}{\varepsilon}}(\Gamma, x)$ is a map satisfying $d(f(\psi(g)x_1), g f(x_1)) <\varepsilon$ for every $g\in \Sigma_{\frac{1}{\varepsilon}}(\Lambda, y)$ and every $x_1\in B(x, \frac{1}{\varepsilon})$ such that $\psi(g)x_1 \in B(x, \frac{1}{\varepsilon})$.
	\end{itemize}
\end{defin} 
\begin{defin}
	A sequence of triples $(X_n,x_n,\Gamma_n)$ is said to converge in the equivariant pointed Gromov-Hausdorff sense to a triple $(X,x,\Gamma)$ if for every $\varepsilon > 0$ there exists $n_\varepsilon \geq 0$ such that if $n\geq n_\varepsilon$ then there exists an equivariant $\varepsilon$-approximation between $(X_n,x_n,\Gamma_n)$ and $(X,x,\Gamma)$.	One of these equivariant $\varepsilon$-approximations will be denoted by $(f_n,\varphi_n,\psi_n)$. \\
	In this case we will write $(X_n,x_n,\Gamma_n) \underset{\text{eq-pGH}}{\longrightarrow} (X,x,\Gamma)$.
\end{defin}
\begin{obs} A few observations:
	\begin{itemize}
		\item If $(X_n,x_n,\Gamma_n) \underset{\textup{eq-pGH}}{\longrightarrow} (X,x,\Gamma)$ then $(X_n,x_n)$ converges in the classical pointed Gromov-Hausdorff sense to $(X,x)$. We denote this convergence by $(X_n,x_n) \underset{\textup{pGH}}{\longrightarrow} (X,x)$.
		\item In the definition the limit space $X$ is assumed to be proper. This is not restrictive as we see in a moment. If $(X_n,x_n,\Gamma_n) \underset{\textup{eq-pGH}}{\longrightarrow} (X,x,\Gamma)$ we denote by $\hat{X}$ the completion of $X$. Any isometry of $X$ defines uniquely an isometry of $\hat{X}$, so there is a well defined group of isometries $\hat{\Gamma}$ of $\hat{X}$ associated to $\Gamma$. It follows from the definition that $(X_n,x_n,\Gamma_n) \underset{\textup{eq-pGH}}{\longrightarrow} (\hat{X},\hat{x},\hat{\Gamma})$ too.		
		Moreover if a sequence of proper metric spaces converges in the pointed Gromov-Hausdorff sense to a complete metric space then the limit is proper by Corollary 3.10 of \cite{Her16}. 
	\end{itemize}
\end{obs}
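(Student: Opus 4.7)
The plan is to verify the two bullet points by unwinding the definitions; neither claim is deep, but each requires a little bookkeeping.

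For the first bullet, I would simply observe that if $(f_n,\varphi_n,\psi_n)$ is an equivariant $\varepsilon$-approximation between $(X_n,x_n,\Gamma_n)$ and $(X,x,\Gamma)$, then the first coordinate $f_n$ already satisfies the three conditions in the classical definition of a pointed Gromov-Hausdorff $\varepsilon$-approximation: it sends basepoint to basepoint, is an $\varepsilon$-almost-isometry on $B(x_n,1/\varepsilon)$, and has image $\varepsilon$-dense in $B(x,1/\varepsilon)$. The maps $\varphi_n,\psi_n$ are irrelevant for this statement, so nothing further is needed.

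For the second bullet I would proceed in three steps. First, extend isometries: every $g\in\Gamma$ is a distance-preserving bijection of $X$, hence uniformly continuous, so it extends uniquely to a distance-preserving map $\hat{g}\colon\hat{X}\to\hat{X}$ by the standard density-and-completeness argument; applying the same to $g^{-1}$ shows $\hat{g}$ is bijective and $\widehat{g^{-1}}=\hat{g}^{-1}$, so $\hat{g}\in\textup{Isom}(\hat{X})$. Moreover $\widehat{gh}$ and $\hat{g}\hat{h}$ agree on the dense subset $X$, hence everywhere, so $g\mapsto\hat{g}$ is a group homomorphism, and injectivity is immediate since $\hat{g}|_{X}=g$; define $\hat{\Gamma}$ to be the image. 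Second, upgrade the convergence: since $X$ is dense in $\hat{X}$ with $\hat{x}=x$, the ball $B(\hat{x},1/\varepsilon)\subseteq\hat{X}$ lies in the $\eta$-neighbourhood of $B(x,1/\varepsilon-\eta)\subseteq X$ for every $\eta>0$, and the containment $\Sigma_{1/\varepsilon}(\hat{\Gamma},\hat{x})=\Sigma_{1/\varepsilon}(\Gamma,x)$ is a natural equality. Given an equivariant $(\varepsilon/3)$-approximation between $(X_n,x_n,\Gamma_n)$ and $(X,x,\Gamma)$, I would compose it with the inclusion $X\hookrightarrow\hat{X}$, extend it arbitrarily to a slightly larger ball by density, and check that all three displayed $\varepsilon$-estimates in the definition still hold when the target is $(\hat{X},\hat{x},\hat{\Gamma})$, losing at most an additional $2\varepsilon/3$ in each. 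Third, invoke Corollary 3.10 of \cite{Her16}: once we know $\hat{X}$ is a complete pGH limit of the proper spaces $X_n$, properness of $\hat{X}$ is automatic.

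The only mildly delicate point is the second step, where one must be sure that balls and short-isometry sets transport cleanly between $X$ and its completion without contaminating the approximation constants; this is a routine triangle-inequality calculation, and I would expect it to be the main, though minor, obstacle. No new ideas beyond the definitions and the cited properness statement are required.
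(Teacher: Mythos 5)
Your proposal is correct and follows essentially the same route as the paper, which treats both bullets as direct consequences of the definitions (extending isometries to the completion, noting the approximation maps carry over unchanged) and cites Corollary 3.10 of \cite{Her16} for properness exactly as you do. The radius bookkeeping you flag in the second step is indeed the only point requiring care, and your handling of it is fine.
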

We recall that Isom$(X)$ is endowed with the topology of uniform convergence on compact subsets of $X$, when $X$ is a proper space. It is classically known (see also the remark above) that the pointed Gromov-Hausdorff limit of a sequence of metric spaces is unique up to pointed isometry when we restrict to the class of complete (and therefore proper) spaces. In order to obtain uniqueness of the equivariant pointed Gromov-Hausdorff limit we need to restrict to groups that are closed in the isometry group of the limit space.
\begin{prop}[Proposition 1.5 of \cite{Fuk86}]
	Suppose $(X_n,x_n,\Gamma_n) \underset{\textup{eq-pGH}}{\longrightarrow} (X,x,\Gamma)$ and $(X_n,x_n,\Gamma_n) \underset{\textup{eq-pGH}}{\longrightarrow} (Y,y,\Lambda)$, where $X,Y$ are proper and $\Gamma,\Lambda$ are closed subgroups of \textup{Isom}$(X)$, \textup{Isom}$(Y)$ respectively.
	Then there exists an isometry $F\colon X \to Y$ such that
	\begin{itemize}
		\item[-] $F(x)=y$;
		\item[-] $F_*\colon \textup{Isom}(X) \to \textup{Isom}(Y)$ defined by $F_*(g) = F\circ g \circ F^{-1}$ is an isomorphism between $\Gamma$ and $\Lambda$.
	\end{itemize}
\end{prop}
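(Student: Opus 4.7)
The plan is to first extract the underlying pointed isometry $F\colon X\to Y$ from the standard pointed Gromov--Hausdorff uniqueness theorem, then use both families of equivariant approximations to transport each $g\in\Gamma$ to a candidate element of $\Lambda$; closedness of $\Gamma$ and $\Lambda$ will force this candidate to equal $F\circ g\circ F^{-1}$ and to land in the right subgroup.

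Let $(f_n,\varphi_n,\psi_n)$ and $(f_n',\varphi_n',\psi_n')$ be equivariant $\varepsilon_n$-approximations realizing the two convergences, with $\varepsilon_n\to 0$. Using quasi-surjectivity of $f_n$, I pick approximate inverses $g_n$ and set $F_n:=f_n'\circ g_n$. Each $F_n$ is a pointed $O(\varepsilon_n)$-approximation between $(X,x)$ and $(Y,y)$, so a diagonal Arzel\`a--Ascoli argument on an exhaustion of $X$ by balls of increasing radius extracts, after passing to a subsequence, a pointed surjective isometry $F\colon X\to Y$ which is the uniform-on-compact-sets limit of $F_n$. This is the classical pGH uniqueness step.

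Next, I define $\Phi\colon \Gamma\to\mathrm{Isom}(Y)$ by $h_n(g):=\varphi_n'(\psi_n(g))$ and $\Phi(g):=\lim_n h_n(g)$. For fixed $g\in\Gamma$ the element $h_n(g)$ is eventually defined, and chaining the defining inequalities of $\psi_n$ and $\varphi_n'$ at $x_1=x_n$ gives $d(y,h_n(g)y)=d(x,gx)+o(1)$, so the displacements are bounded. To identify any subsequential limit, fix $z\in X$ and pick $z_n\in X_n$ with $f_n(z_n)\to z$; then
\begin{equation*}
d\bigl(f_n(\psi_n(g)z_n),\,g\,f_n(z_n)\bigr)<\varepsilon_n,\qquad d\bigl(f_n'(\psi_n(g)z_n),\,h_n(g)\,f_n'(z_n)\bigr)<\varepsilon_n'.
\end{equation*}
The first inequality says that $\psi_n(g)z_n$ is an approximate $f_n$-preimage of $gz$, whence $f_n'(\psi_n(g)z_n)\to F(gz)$; combined with the second inequality and $f_n'(z_n)\to F(z)$, every cluster value of $\{h_n(g)\}$ sends $F(z)$ to $F(gz)$. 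Since $z$ is arbitrary and $F$ is surjective, that cluster value is $F\circ g\circ F^{-1}$, so the whole sequence $\{h_n(g)\}$ converges to this isometry. Because each $h_n(g)\in\Lambda$ and $\Lambda$ is closed in $\mathrm{Isom}(Y)$, we conclude $F\circ g\circ F^{-1}\in\Lambda$, so $\Phi$ is a well-defined homomorphism into $\Lambda$.

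Injectivity of $\Phi$ is immediate from $F$ being a bijection, and the homomorphism property is automatic. For surjectivity I run the symmetric construction with $\psi_n'$ and $\varphi_n$ in the opposite direction: it produces a homomorphism $\Lambda\to\mathrm{Isom}(X)$ which by the same argument equals $h\mapsto F^{-1}\circ h\circ F$ and lands in the closed subgroup $\Gamma$. This provides a two-sided inverse to $\Phi$, yielding $F_*\Gamma=\Lambda$. The main obstacle throughout is that the four families of approximation maps are neither homomorphisms nor canonical, and compositions like $\varphi_n'\circ\psi_n$ are only partially defined; the remedy is that every subsequential limit of the candidate image is forced to equal the explicit conjugate $F\circ g\circ F^{-1}$, which simultaneously yields convergence of the full sequence, independence of all choices, and membership in the closed target subgroup.
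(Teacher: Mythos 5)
Your argument is correct, and it is worth noting at the outset that the paper itself offers no proof of this statement: it is quoted verbatim as Proposition~1.5 of Fukaya's paper. Your route is the classical direct one (approximate inverses, diagonal Arzel\`a--Ascoli, identification of all cluster values of $\varphi_n'\circ\psi_n(g)$ with the conjugate $F\circ g\circ F^{-1}$, closedness of $\Lambda$ to land in the right subgroup), and all the essential points are in place: the bounded-displacement estimate that makes $\{h_n(g)\}$ precompact in $\mathrm{Isom}(Y)$, the use of surjectivity of $F$ to upgrade ``$h(F(z))=F(gz)$ for all $z$'' to $h=F\circ g\circ F^{-1}$, and the symmetric inclusion for surjectivity. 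The paper's own machinery would give the same conclusion by a different packaging: Proposition~\ref{GH---ultralimit} shows each of the two limits is equivariantly isometric to the ultralimit triple $(X_\omega,x_\omega,\Gamma_\omega)$ for a fixed non-principal ultrafilter, and transitivity of equivariant isometry finishes; there the closedness hypothesis enters through Lemma~\ref{lemma-ultralimit-constant} (via Ascoli--Arzel\`a) exactly where you invoke closedness of $\Gamma$ and $\Lambda$ directly, so the two proofs are morally the same with ultrafilters replacing your diagonal subsequence. The one step you should make fully explicit is the surjectivity clause: the ``symmetric construction'' a priori produces some isometry $G\colon Y\to X$ and the inclusion $G\Lambda G^{-1}\subseteq\Gamma$, which does not combine with $F\Gamma F^{-1}\subseteq\Lambda$ to give equality unless $G=F^{-1}$; this is fixed either by choosing the approximate inverses compatibly (so that $f_n\circ g_n'$ converges to $F^{-1}$ along the same subsequence) or, as your phrasing suggests, by identifying the cluster values of $\varphi_n(\psi_n'(h))$ directly against the already-fixed $F$, which does work but deserves a sentence.
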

\begin{defin}
	Two triples $(X,x,\Gamma)$ and $(Y,y,\Lambda)$ are said equivariantly isometric if there exists an isometry $F\colon X \to Y$ satisfying the thesis of the previous proposition. In this case $F$ is called an equivariant isometry and we write $(X,x,\Gamma)\cong(Y,y,\Lambda)$.
\end{defin}
\noindent \emph{From now on we will consider the equivariant pointed Gromov-Hausdorff convergence only restricted to triples $(X,x,\Gamma)$ where $(X,x)$ is a pointed proper metric space and $\Gamma$ is a closed subgroup of \textup{Isom}$(X)$.}\vspace{2mm}

\noindent This condition is not restrictive.
\begin{lemma}
	If $(X_n,x_n,\Gamma_n) \underset{\textup{eq-pGH}}{\longrightarrow} (X,x,\Gamma)$ then $(X_n,x_n,\Gamma_n) \underset{\textup{eq-pGH}}{\longrightarrow} (X,x,\bar\Gamma)$, where $\bar\Gamma$ is the closure of $\Gamma$ in \textup{Isom}$(X)$.
\end{lemma}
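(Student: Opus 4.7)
The plan is, for each $\varepsilon > 0$ and all large $n$, to upgrade a given equivariant $\varepsilon_0$-approximation $(f_n, \varphi_n, \psi_n)$ between $(X_n, x_n, \Gamma_n)$ and $(X, x, \Gamma)$, available from the hypothesis for an auxiliary $\varepsilon_0 \ll \varepsilon$, into an equivariant $\varepsilon$-approximation $(f_n', \varphi_n', \psi_n')$ between $(X_n, x_n, \Gamma_n)$ and $(X, x, \bar\Gamma)$. The crucial input is that, by definition of closure in the topology of uniform convergence on compacta, every element of $\bar\Gamma$ is approximable by elements of $\Gamma$ uniformly on any compact subset of $X$; since $X$ is proper, $\overline{B}(x, 1/\varepsilon_0)$ is compact, so this approximability is available at the relevant scale.

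I would take $f_n' := f_n$, with domain restricted to $B(x_n, 1/\varepsilon)$, and for $\varphi_n'$ simply set $\varphi_n'(g) := \varphi_n(g) \in \Gamma \subseteq \bar\Gamma$. The construction of $\psi_n'$ is the heart of the argument: for each $\bar g \in \Sigma_{1/\varepsilon}(\bar\Gamma, x)$, using $\bar g \in \bar\Gamma$ together with compactness of $\overline{B}(x, 1/\varepsilon_0)$ I pick $g_{\bar g} \in \Gamma$ with
\[
\sup_{y \in \overline{B}(x, 1/\varepsilon_0)} d\bigl(g_{\bar g}\, y,\, \bar g\, y\bigr) < \varepsilon_0.
\]
Choosing $\varepsilon_0$ small enough that $1/\varepsilon + \varepsilon_0 \leq 1/\varepsilon_0$, the element $g_{\bar g}$ lies in $\Sigma_{1/\varepsilon_0}(\Gamma, x)$, so $\psi_n(g_{\bar g})$ is defined and I set $\psi_n'(\bar g) := \psi_n(g_{\bar g})$.

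The main obstacle is verifying the $\psi_n'$-compatibility, where both the original and the closure approximations must be invoked. Given $\bar g \in \Sigma_{1/\varepsilon}(\bar\Gamma, x)$ and an admissible $x_1 \in B(x_n, 1/\varepsilon)$, the triangle inequality gives
\[
d\bigl(f_n(\psi_n'(\bar g) x_1),\, \bar g\, f_n(x_1)\bigr) \leq d\bigl(f_n(\psi_n(g_{\bar g}) x_1),\, g_{\bar g} f_n(x_1)\bigr) + d\bigl(g_{\bar g} f_n(x_1),\, \bar g\, f_n(x_1)\bigr),
\]
with the first summand $< \varepsilon_0$ by the $\psi_n$-property of the original approximation (using $B(x_n, 1/\varepsilon) \subseteq B(x_n, 1/\varepsilon_0)$) and the second $< \varepsilon_0$ by the defining inequality for $g_{\bar g}$, since $f_n(x_1) \in \overline{B}(x, 1/\varepsilon_0)$. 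The total is $< 2\varepsilon_0 \leq \varepsilon$ for $\varepsilon_0 \leq \varepsilon/2$. The $f_n'$- and $\varphi_n'$-compatibilities are inherited directly from $(f_n, \varphi_n)$, and the minor numerical slippage in the codomain conditions (e.g.\ $d(x, \varphi_n(g) x) \leq 1/\varepsilon + 2\varepsilon_0$ rather than $\leq 1/\varepsilon$) is absorbed by a standard adjustment of parameters — producing the approximation at a scale $\varepsilon - \mu$ for some small $\mu > 0$ while retaining $\varepsilon_0$ even smaller.
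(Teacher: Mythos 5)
Your proposal is correct and follows essentially the same route as the paper: both keep $f_n$ and $\varphi_n$ unchanged, approximate each $\bar g$ in the closure by some $g\in\Gamma$ uniformly on a compact ball large enough that $g$ lands in the domain of the original $\psi_n$, define the new $\psi$ by composing with $\psi_n$, and close the estimate with the same two-term triangle inequality. The only difference is bookkeeping (you aim for an $\varepsilon$-approximation from an $\varepsilon_0$-approximation, the paper settles for a $2\varepsilon$-approximation from an $\varepsilon/2$-one), which is immaterial.
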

\begin{proof}
	By definition for every $\varepsilon > 0$ there is $n_\varepsilon \geq 0$ such that for every $n\geq n_\varepsilon$ there is an equivariant $\frac{\varepsilon}{2}$-approximation $(f_n,\varphi_n,\psi_n)$ between $(X_n,x_n,\Gamma_n)$ and $(X,x,\Gamma)$. We want to define an equivariant $2\varepsilon$-approximation $(f_n,\varphi_n,\bar\psi_n)$ between $(X_n,x_n,\Gamma_n)$ and $(X,x,\bar\Gamma)$.\\ For every $g\in \Sigma_\frac{1}{\varepsilon}(\bar\Gamma,x)$ there is a sequence of isometries $g_k\in \Gamma$ such that $g_k \to g$ uniformly on compact subsets of $X$. In particular for every $\delta > 0$ there exists $k_\delta \geq 0$ such that if $k\geq k_\delta$ then $d(g_k(y), g(y)) \leq \delta$ for every $y\in \overline{B}(x, \frac{2}{\varepsilon})$.
	Choosing $\delta$ small enough we have $g_k \in \Sigma_{\frac{2}{\varepsilon}}(\Gamma, x)$ for $k \geq k_\delta$. We define $\bar\psi_n(g) := \psi_n(g_{k_\delta})$. Observe that for every $y_n\in B(x_n,\frac{1}{\varepsilon})$ we have
	\begin{equation*}
		\begin{aligned}
			d(f_n(\bar\psi(g)y_n), gf_n(y_n)) &= d(f_n(\psi_n(g_{k_\delta})), gf_n(y_n)) \\
			&\leq d(f_n(\psi_n(g_{k_\delta})), g_{k_\delta}f_n(y_n)) + d( g_{k_\delta}f_n(y_n), gf_n(y_n)) \\
			&\leq \varepsilon + \delta,
		\end{aligned}
	\end{equation*}
	where the last inequality follows since $f_n(y_n)\in \overline{B}(x, \frac{2}{\varepsilon})$. Taking $\delta \leq \varepsilon$ we conclude that $(f_n,\varphi_n, \bar\psi_n)$ is the desired equivariant $2\varepsilon$-approximation and it is defined for all $n\geq n_\varepsilon$.
\end{proof}

\subsection{Ultralimit of groups}
For more detailed notions on ultralimits we refer to \cite{DK18} and \cite{CavS20}. A non-principal ultrafilter $\omega$ is a finitely additive measure on $\mathbb{N}$ such that $\omega(A) \in \lbrace 0,1 \rbrace$ for every $A\subseteq \mathbb{N}$ and $\omega(A)=0$ for every finite subset of $\mathbb{N}$. Accordingly we write $\omega$-a.s. and for $\omega$-a.e.$(n)$ in the usual measure theoretic sense. \\
Given a bounded sequence $(a_n)$ of real numbers and a non-principal ultrafilter $\omega$ there exists a unique $a\in \mathbb{R}$ such that for every $\varepsilon > 0$ the set $\lbrace n \in \mathbb{N} \text{ s.t. } \vert a_n - a \vert < \varepsilon\rbrace$ has $\omega$-measure $1$, see for instance \cite{DK18}, Lemma 10.25. The real number $a$ is called the ultralimit of the sequence $a_n$ and it is denoted by $\omega$-$\lim a_n$.\\
Given a sequence of pointed metric spaces $(X_n, x_n)$ we denote by $(X_\omega, x_\omega)$ the ultralimit pointed metric space. It is the set of sequences $(y_n)$, where $y_n\in X_n$ for every $n$, for which there exists $M\in\mathbb{R}$ such that $d(x_n,y_n)\leq M$ for $\omega$-a.e.$(n)$, modulo the relation $(y_n)\sim (y_n')$ if and only if $\omega$-$\lim d(y_n,y_n') = 0$. The point of $X_\omega$ defined by the class of the sequence $(y_n)$ is denoted by $y_\omega = \omega$-$\lim y_n$.
The formula $d(\omega$-$\lim y_n, \omega$-$\lim y_n') = \omega$-$\lim d(y_n,y_n')$ defines a metric on $X_\omega$ which is called the ultralimit distance on $X_\omega$.\\
A sequence of isometries $g_n \in \text{Isom}(X_n)$ is admissible if there exists $M\geq 0$ such that $d(g_n x_n, x_n) \leq M$ $\omega$-a.s. Any such sequence defines an isometry $g_\omega = \omega$-$\lim g_n$ of $X_\omega$ by the formula $g_\omega y_\omega = \omega$-$\lim g_ny_n$, Lemma 10.48 of \cite{DK18}.
Given a sequence of groups of isometries $\Gamma_n$ of $X_n$ we set
$$\Gamma_\omega = \lbrace \omega\text{-}\lim g_n \text{ s.t. } g_n \in \Gamma_n \text{ for } \omega\text{-a.e.}(n)\rbrace.$$
In particular the elements of $\Gamma_\omega$ are ultralimits of admissible sequences. 
\begin{lemma}
	\label{composition}
	The composition of admissible sequences of isometries is an admissible sequence of isometries and the limit of the composition is the composition of the limits.
\end{lemma}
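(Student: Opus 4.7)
The proof naturally splits into two assertions, and both follow from straightforward unwinding of the definitions together with two applications of the triangle inequality.

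My plan for admissibility of the composition is as follows. Let $(g_n), (h_n)$ be admissible, witnessed by constants $M_g, M_h$ so that $d(g_n x_n, x_n) \leq M_g$ and $d(h_n x_n, x_n) \leq M_h$ on sets $A_g, A_h \subseteq \mathbb{N}$ of $\omega$-measure $1$. On $A_g \cap A_h$, which still has $\omega$-measure $1$ by finite additivity, I would bound
\[
d(g_n h_n x_n, x_n) \leq d(g_n h_n x_n, g_n x_n) + d(g_n x_n, x_n) = d(h_n x_n, x_n) + d(g_n x_n, x_n) \leq M_g + M_h,
\]
using that $g_n$ is an isometry. This gives admissibility of $(g_n \circ h_n)$ with constant $M_g + M_h$.

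For the identity $\omega\text{-}\lim (g_n h_n) = (\omega\text{-}\lim g_n) \circ (\omega\text{-}\lim h_n)$ I would fix an arbitrary $y_\omega = \omega\text{-}\lim y_n \in X_\omega$ and check that both sides agree on it. By the formula from Lemma 10.48 of \cite{DK18} (recalled just before the statement), $(\omega\text{-}\lim (g_n h_n))(y_\omega) = \omega\text{-}\lim (g_n h_n y_n)$. On the other hand $(\omega\text{-}\lim h_n)(y_\omega) = \omega\text{-}\lim h_n y_n$, and this is a well-defined element of $X_\omega$ because
\[
d(h_n y_n, x_n) \leq d(h_n y_n, h_n x_n) + d(h_n x_n, x_n) \leq d(y_n, x_n) + M_h,
\]
and $d(y_n, x_n)$ is $\omega$-a.s.\ bounded by definition of $X_\omega$. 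Applying the ultralimit isometry formula a second time gives $(\omega\text{-}\lim g_n)(\omega\text{-}\lim h_n y_n) = \omega\text{-}\lim g_n(h_n y_n)$, which is literally the same sequence as in the first computation. Since the two isometries of $X_\omega$ agree on every point, they coincide.

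There is no real obstacle here; the only subtlety I would be careful about is keeping track of which properties hold only $\omega$-almost surely and verifying that all the intermediate sequences that appear define bona fide points of $X_\omega$ (i.e.\ have $\omega$-a.s.\ bounded distance from the basepoints). Both issues are handled by taking intersections of finitely many $\omega$-measure-$1$ sets and by the triangle-inequality bound above.
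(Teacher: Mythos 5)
Your proof is correct and follows essentially the same route as the paper: the paper's (very brief) argument is exactly the triangle inequality $d(g_n h_n x_n, x_n) \leq d(g_n h_n x_n, g_n x_n) + d(g_n x_n, x_n)$ with $d(g_n h_n x_n, g_n x_n) = d(h_n x_n, x_n)$ since $g_n$ is an isometry. Your additional pointwise verification that $\omega$-$\lim(g_n h_n)$ agrees with $(\omega$-$\lim g_n)\circ(\omega$-$\lim h_n)$ on every $y_\omega$ is a welcome piece of bookkeeping that the paper leaves implicit via the defining formula $g_\omega y_\omega = \omega$-$\lim g_n y_n$.
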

\noindent  (Indeed, if  $g_\omega = \omega$-$\lim g_n$, $h_\omega = \omega$-$\lim h_n$ belong to $\Gamma_\omega$ then  their composition belong to $\Gamma_\omega$, as $\omega$-$\lim d(g_n h_n \cdot x_n, x_n)\leq \omega$-$\lim d(g_n h_n \cdot x_n, g_n \cdot x_n) + \omega$-$\lim d(g_n \cdot x_n, x_n) < + \infty$).

\noindent Analogously one proves that $(\text{id}_n)$ belongs to $\Gamma_\omega$ and defines the identity map of $X_\omega$, and that  if $g_\omega = \omega$-$\lim g_n$ belongs to $\Gamma_\omega$ then also the sequence $(g_n^{-1})$ defines an element of $\Gamma_\omega$, which is the inverse of $g_\omega$.

\noindent So we have a well defined composition law on $\Gamma_\omega$, that is for $ g_\omega   = \omega$-$\lim g_n$ and  $ h_\omega = \omega$-$\lim h_n$ we set $g_\omega \, \circ\, h_\omega = \omega\text{-}\lim(g_n \circ h_n).$
With this operation $\Gamma_\omega$ is a group of isometries of $X_\omega$ and we call it {\em the ultralimit group} of the sequence of groups $\Gamma_n$. \\
%
%
%
%The set $\Gamma_\omega$ is closed under composition (\cite{CavS20bis}, Lemma 6.8) and therefore it is a subgroup of isometries of $X_\omega$, called the ultralimit group.\\
The ultralimit space $X_\omega$ may be not proper in general, even if $X_n$ is proper for every $n$. When $X_\omega$ is proper then $\Gamma_\omega$ is closed with respect to the uniform convergence on compact subsets.
\begin{prop}
	\label{ultralimit-closed}
	Let $(X_n,x_n,\Gamma_n)$ be a sequence of proper metric spaces and $\omega$ be a non-principal ultrafilter. If $X_\omega$ is proper then $\Gamma_\omega$ is a closed subgroup of \textup{Isom}$(X_\omega)$.
\end{prop}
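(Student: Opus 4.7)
The plan is a diagonal argument. Suppose $g_\omega^{(k)} \in \Gamma_\omega$ converges uniformly on compact subsets of $X_\omega$ to some $g \in \mathrm{Isom}(X_\omega)$; I must find an admissible sequence $(h_n)$ with $h_n \in \Gamma_n$ and $\omega$-$\lim h_n = g$. For each $k$, write $g_\omega^{(k)} = \omega$-$\lim g_n^{(k)}$ with $g_n^{(k)} \in \Gamma_n$, and set $h_n := g_n^{(k(n))}$ for a suitable index $k(n)$ that tends to $\infty$ in the $\omega$-sense.

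Properness of $X_\omega$ enters twice. It lets me fix a countable dense subset $\{y_\omega^{(j)}\}_{j \geq 1}$ of $X_\omega$ with representatives $y_n^{(j)}$, and it ensures that the moduli
\[
\varepsilon_k(R) := \sup_{y_\omega \in \overline{B}(x_\omega, R)} d(g_\omega^{(k)} y_\omega,\, g y_\omega)
\]
are finite and, by the uniform-on-compacts hypothesis, satisfy $\varepsilon_k(R) \to 0$ as $k\to\infty$ for each $R$. Set $R_j := d(x_\omega, y_\omega^{(j)}) + 1$, fix representatives $z_n^{(j)}$ of $g y_\omega^{(j)}$, and pick a uniform bound $M := \sup_k d(g_\omega^{(k)} x_\omega, x_\omega)<+\infty$ (which exists since $g_\omega^{(k)} x_\omega \to g x_\omega$). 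By the very definition of ultralimit, each of the sets
\[
B_k := \bigl\{ n : d(g_n^{(k)} x_n, x_n) \leq M{+}1 \text{ and } d(g_n^{(k)} y_n^{(j)}, z_n^{(j)}) \leq \varepsilon_k(R_j) + 1/k \text{ for every } j \leq k \bigr\}
\]
has $\omega$-measure $1$. Intersecting, $C_k := B_1 \cap \cdots \cap B_k$ still has full measure, so taking $k(n)$ to be the largest index $k$ with $n \in C_k$ (set arbitrarily on the negligible set where this fails) yields $\omega$-$\lim k(n) = +\infty$, since $\{n : k(n) \geq K\} \supseteq C_K$.

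Finally I define $h_n := g_n^{(k(n))} \in \Gamma_n$. The admissibility bound $d(h_n x_n, x_n) \leq M+1$ holds $\omega$-a.s., so $h_\omega := \omega$-$\lim h_n$ is a well-defined element of $\Gamma_\omega$, and for every fixed $j$ the estimate
\[
d(h_n y_n^{(j)}, z_n^{(j)}) \leq \varepsilon_{k(n)}(R_j) + 1/k(n)
\]
holds $\omega$-a.s., forcing $h_\omega y_\omega^{(j)} = g y_\omega^{(j)}$. Density of $\{y_\omega^{(j)}\}$ together with the fact that both $h_\omega$ and $g$ are isometries then forces $h_\omega = g$, so $g \in \Gamma_\omega$.

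The main point that needs care is bookkeeping rather than a deep obstacle: one has to stitch together three layers of approximation—of $g$ by $g_\omega^{(k)}$ on balls of growing radius, of each $g_\omega^{(k)}$ by the sequence $g_n^{(k)}$ in the ultralimit sense, and of $g$ itself by its values on the countable dense set—in such a way that the diagonal sequence $(h_n)$ still satisfies a uniform admissibility bound. Properness is used essentially: without it neither the countable dense set nor the finiteness of $\varepsilon_k(R)$ would be automatic, and the argument would break at the first step.
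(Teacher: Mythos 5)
Your argument is the same diagonal construction the paper uses, organized a little more cleanly: where the paper compares consecutive terms $g_\omega^k,g_\omega^{k+1}$ via a Cauchy-type condition on finite nets of the balls $B(x_\omega,k)$, you compare each $g_\omega^{(k)}$ directly to the limit $g$ through the moduli $\varepsilon_k(R)$ and a countable dense set, which avoids the telescoping sums. All the individual estimates check out. There is, however, one step that as written would fail, and it is precisely the point where the paper splits into two cases. The set of $n$ for which ``the largest $k$ with $n\in C_k$'' does not exist is, up to the null set $\mathbb{N}\setminus C_1$, the intersection $\bigcap_{k}C_k$. This is a \emph{countable} intersection of full-measure sets under a merely finitely additive ultrafilter: it need not be negligible, and it can perfectly well have $\omega$-measure $1$ (for a constant sequence of triples every $C_k$ can be all of $\mathbb{N}$). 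If on that set you set $k(n)$ ``arbitrarily'' --- say to a bounded value --- then the inclusion $\{n:k(n)\geq K\}\supseteq C_K$ is false, $\omega$-$\lim k(n)=+\infty$ fails, and the conclusion $h_\omega y_\omega^{(j)}=gy_\omega^{(j)}$ is lost.

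The repair is one line and is exactly the paper's Case 1: on $\bigcap_k C_k$ define $k(n):=n$ (equivalently, take $k(n):=\max\{k\leq n:\ n\in C_k\}$ throughout), so that $\{n:k(n)\geq K\}\supseteq C_K\cap[K,+\infty)$ still has $\omega$-measure $1$ and the rest of your estimates go through verbatim. With that modification your proof is complete and is essentially the proof in the paper; the paper's Case 2, with its index $j(n)$ determined by $n\in A_{j(n)}\setminus A_{j(n)+1}$, is your ``largest $k$'' in the situation where that maximum is finite $\omega$-almost surely, and its Case 1 is the diagonal choice you were missing.
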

We remark that the proof is analogous to Corollary 10.64 of \cite{DK18}.
\begin{proof}
	Let $(g_\omega^k)^{k\in \mathbb{N}}$ be a sequence of isometries of $\Gamma_\omega$ converging to an isometry $g^\infty$ of $X_\omega$ with respect to the uniform convergence on compact subsets.
	We want to show that $g^\infty$ coincides with the ultralimit of some sequence of admissible isometries $g_n^\infty$ of $X_n$.\\
	First of all we can extract a subsequence, denoted again by $(g_\omega^k)$, satisfying 
	$d(g_\omega^k y_\omega, g^{k+1}_\omega y_\omega) \leq \frac{1}{2^k}$ for all $y_\omega\in \overline{B}(x_\omega, k)$, for all $k\in \mathbb{N}$.
	Now, for every fixed $k\in \mathbb{N}$ let $S^k = \lbrace y_\omega^1,\ldots,y_\omega^{N_k}\rbrace$ be a $\frac{1}{2^{k+2}}$-dense subset of $B(x_\omega, k)$. It is finite since $X_\omega$ is proper. If $y_\omega^i = \omega$-$\lim y_n^i$ for $i=1,\ldots,N_k$, it is clear that the set $S^k_n = \lbrace y_n^1,\ldots, y_n^{N_k}\rbrace$ is a $\frac{1}{2^{k+1}}$-dense subset of $B(x_n, k)$ for $\omega$-a.e.$(n)$.
	For every $k\in \mathbb{N}$ we define the set
	$$A_k = \left\lbrace n \in \mathbb{N} \text{ s.t. } d(g_n^jy_n^i, g_n^{j+1}y_n^i) \leq \frac{1}{2^j} \text{ for all } 1\leq j \leq k,\,\,\, i = 1,\ldots, N_j\right\rbrace.$$
	By definition $\omega(A_k)=1$ and $A_{k+1}\subseteq A_{k}$ for every $k\in \mathbb{N}$. We set $B=\bigcap_{k\in \mathbb{N}} A_k$.
	There are two cases:\\
	\textbf{Case 1: $\omega(B)=1$.} In this case
	$d(g_n^ky_n^i, g_n^{k+1}y_n^i) \leq \frac{1}{2^k}$
	for all $i=1,\ldots, N_k$ and all $k\in \mathbb{N}$, $\omega$-a.s. Then $\omega$-a.s. we have $d(g_n^ky_n, g_n^{k+1}y_n) \leq \frac{1}{2^{k-1}}$ for every $k\in \mathbb{N}$ and every $y_n\in B(x_n,k)$. We set $g_n^\infty := g_n^n \in \Gamma_n$. This sequence of isometries is admissible and we denote its ultralimit by $g_\omega^\infty \in \Gamma_\omega$. Now we fix $y_\omega = \omega$-$\lim y_n \in X_\omega$. By definition $d(x_n,y_n)\leq M$ for $\omega$-a.e.$(n)$. We have
	\begin{equation*}
		\begin{aligned}
			d(g^\infty y_\omega, g^\infty_\omega y_\omega) \leq d(g^k_\omega y_\omega, g^\infty_\omega y_\omega) + \frac{1}{2^{k}} &= \omega\text{-}\lim d(g_n^ky_n, g_n^n y_n) + \frac{1}{2^k} \\
			&\leq \sum_{j=0}^{n-k-1}d(g_n^{k+j}y_n, g_n^{k+j+1}y_n) + \frac{1}{2^k} \\
			&\leq \sum_{j=k-1}^\infty \frac{1}{2^j} + \frac{1}{2^k} \leq \frac{1}{2^{k-2}} + \frac{1}{2^k} \leq \frac{1}{2^{k-3}}
		\end{aligned}
	\end{equation*}
	for all fixed $k\geq M$. We conclude that $g^\infty y_\omega = g^\infty_\omega y_\omega$. By the arbitrariness of $y_\omega$ we get $g^\infty = g^\infty_\omega$ that belongs to $\Gamma_\omega$.\\
	\textbf{Case 2: $\omega(B)=0$.} Since $A_1 = \bigsqcup_{j=1}^\infty (A_j\setminus A_{j+1})  \sqcup B$ and since $\omega(A_1) = 1$ then $\omega(\bigsqcup_{j=1}^\infty (A_j\setminus A_{j+1})) = 1$. We set $C = \bigsqcup_{j=1}^\infty (A_j\setminus A_{j+1})$. For all $n \in C$ we set $g_n^\infty := g_n^{j(n)}$, where $j(n)$ is the unique $j\geq 1$ such that $n\in A_j\setminus A_{j+1}$. We claim that the corresponding ultralimit isometry $g_\omega^\infty \in \Gamma_\omega$ equals $g^\infty$. Indeed let $y_\omega= \omega$-$\lim y_n \in X_\omega$. For every fixed $k\in \mathbb{N}$ consider the set $C_k = \bigsqcup_{j=k}^\infty (A_j\setminus A_{j+1})$. Each of the sets $A_j\setminus A_{j+1}$ has $\omega$-measure $0$, so $\omega(C_k) = 1$ for every fixed $k$. Let $n\in C_k \subset C$. By definition $j(n)\geq k$ since $n\in C_k$. Since $A_{j(n)}\subseteq A_{j(n)-1} \subseteq \ldots \subseteq A_k$ we have
	\begin{equation*}
		\begin{aligned}
			d(g_n^{j(n)}y_n, g_n^k y_n) \leq 2\cdot\frac{1}{2^{k+2}} + \sum_{m = k}^{j(n)} \frac{1}{2^m} \leq \frac{1}{2^{k+1}}\cdot \frac{1}{2^{k-1}} \leq \frac{1}{2^{k-2}}
		\end{aligned}
	\end{equation*}
	as soon as $k> d(y_\omega, x_\omega)$ and $n\in C_k$.
	The set $C_k$ has $\omega$-measure $1$, so we conclude that $d(g_\omega^\infty y_\omega, g^\infty y_\omega) \leq \frac{1}{2^{k-2}}$. By the arbitrariness of $k$ and $y_\omega$ we finally get $g^\infty = g_\omega^\infty \in \Gamma_\omega$.	
\end{proof}

%Moreover when the ultralimit group is discrete then the we have convergence of the quotients.
%\begin{prop}
%	Let $(X_n,x_n,\Gamma_n)$ with $\Gamma_n$ discrete with codiameter $\leq D$. Let $\omega$ be a non-principal ultrafilter and suppose $X_\omega$ proper and $\Gamma_\omega$ discrete. Then $(\Gamma_n\backslash X_n)_\omega = \Gamma_\omega \backslash X_\omega$.
%\end{prop}
%\begin{proof}
%	This is the analogous of Proposition METTERE REFERENZA of \cite{CavS20bis}.
%\end{proof}

\begin{lemma}
	\label{lemma-ultralimit-constant}
	Let $(X,x)$ be a proper metric space and $\Gamma\subseteq \textup{Isom}(X)$ be a closed subgroup. Then the ultralimit of the constant sequence $(X,x,\Gamma)$ is naturally equivariantly isometric to $(X,x,\Gamma)$ for every non-principal ultrafilter.
\end{lemma}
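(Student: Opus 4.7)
The plan is to build the equivariant isometry directly via the diagonal embedding. Define $F\colon X\to X_\omega$ by $F(y) = \omega\text{-}\lim y$, i.e.\ the class of the constant sequence $(y,y,y,\ldots)$. Since $d(F(y),F(y')) = \omega\text{-}\lim d(y,y') = d(y,y')$, the map $F$ is an isometric embedding, and it clearly sends $x$ to $x_\omega$. The nontrivial step is surjectivity: given $y_\omega = \omega\text{-}\lim y_n$, the sequence $(y_n)$ lies in some closed ball $\overline{B}(x,M)$, which is compact by properness of $X$. A standard fact about non-principal ultrafilters on $\mathbb{N}$ is that every sequence in a compact metric space $\omega$-converges to a unique point; applying this, we get $y\in X$ with $\omega\text{-}\lim d(y_n,y)=0$, hence $y_\omega = F(y)$.

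Next I would verify that $F_*(\Gamma) = \Gamma_\omega$, where $F_*(g) = F\circ g\circ F^{-1}$. The inclusion $F_*(\Gamma)\subseteq \Gamma_\omega$ is easy: for $g\in\Gamma$, the constant sequence $(g)$ is admissible, its ultralimit $\omega\text{-}\lim g$ lies in $\Gamma_\omega$, and a direct check shows $F_*(g) = \omega\text{-}\lim g$. For the reverse inclusion $\Gamma_\omega\subseteq F_*(\Gamma)$, take $g_\omega = \omega\text{-}\lim g_n$ with admissible $(g_n)\subseteq\Gamma$. For each $y\in X$ the sequence $(g_ny)$ is bounded (since $d(g_ny,x)\le d(y,x)+d(g_nx,x)$ is uniformly bounded), so by the same compact-ultrafilter argument it $\omega$-converges to a unique point $g(y)\in X$. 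The map $g\colon X\to X$ is distance-preserving; applying the analogous construction to $(g_n^{-1})$, which is also admissible by Lemma \ref{composition}, produces a distance-preserving inverse, so $g\in \text{Isom}(X)$. By construction $F_*(g) = g_\omega$.

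The main remaining obstacle is showing $g\in \Gamma$, where we must exploit the assumption that $\Gamma$ is closed. I would argue that $g$ is the uniform-on-compacta limit of a subsequence $(g_{n_k})\subseteq \Gamma$. For every compact $K\subseteq X$ and every $\varepsilon>0$, choose a finite $\varepsilon/3$-net $\{y_1,\ldots,y_m\}$ of $K$; by the isometry property of $g$ and $g_n$,
\[
\sup_{y\in K} d(g_ny, gy) \le \tfrac{2\varepsilon}{3} + \max_{j} d(g_ny_j, gy_j).
\]
Since $\omega\text{-}\lim d(g_ny_j, gy_j)=0$ for each $j$, the set of $n$ for which the right-hand side is $<\varepsilon$ has $\omega$-measure $1$, hence is non-empty. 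A standard diagonal extraction produces $n_k$ with $\sup_{y\in \overline{B}(x,k)}d(g_{n_k}y,gy)<1/k$, so $g_{n_k}\to g$ uniformly on compact subsets. Closedness of $\Gamma$ gives $g\in \Gamma$, completing the proof. Finally, the identification $F_*|_\Gamma\colon \Gamma\to \Gamma_\omega$ is then a bijective homomorphism of isometry groups, so $F$ is an equivariant isometry between $(X,x,\Gamma)$ and $(X_\omega, x_\omega, \Gamma_\omega)$.
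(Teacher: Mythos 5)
Your proof is correct and follows the same overall skeleton as the paper's: diagonal embedding $\iota$, surjectivity from properness plus the fact that ultrafilters converge in compact spaces, the easy inclusion $\iota_*\Gamma\subseteq\Gamma_\omega$, and then the reverse inclusion as the only real content. Where you genuinely diverge is in how the closedness of $\Gamma$ is exploited for that last step. The paper works at the level of the isometry group: since $\Gamma$ is closed and $X$ is proper, $\Sigma_M(\Gamma,x)$ is compact in $\mathrm{Isom}(X)$ by Ascoli--Arzel\`a, so the admissible sequence $(g_n)$ has an $\omega$-limit $g$ \emph{already inside} $\Sigma_M(\Gamma,x)\subseteq\Gamma$, and one only checks that the constant sequence $g$ represents $g_\omega$. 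You instead work at the level of points: you build $g$ pointwise as $g(y)=\omega$-$\lim g_ny$, verify by hand that it is a surjective isometry representing $g_\omega$, and only then show $g\in\Gamma$ via an $\varepsilon/3$-net estimate and a diagonal extraction of a subsequence converging to $g$ uniformly on compacta (using that $\omega$-full sets are infinite). Your route is more elementary and self-contained --- it never needs compactness of $\Sigma_M(\Gamma,x)$ as a subset of $\mathrm{Isom}(X)$, only compactness of balls in $X$ --- at the cost of the extra bookkeeping of the subsequence argument; the paper's route is shorter because Ascoli--Arzel\`a packages exactly that bookkeeping. Both are valid, and both use closedness of $\Gamma$ in an essential and correctly placed way.
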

\begin{proof}
	Let $(X_n,x_n,\Gamma_n) = (X,x,\Gamma)$ for every $n$ and let $\omega$ be a non-principal ultrafilter. By \cite{CavS20}, Proposition A.3 the map $\iota\colon (X,x) \to (X_\omega,x_\omega)$ that sends each point $y$ to the ultralimit point corresponding to the constant sequence $y_n=y$ is an isometry sending $x$ to $x_\omega$. This means that each point $y_\omega$ of $X_\omega$ can be written as $y_\omega = \omega$-$\lim y$ for some $y\in X$. We need to show that $\iota_*\Gamma = \Gamma_\omega$. We have $\iota_*g(\omega$-$\lim y) = \omega$-$\lim gy$
	for every $g\in\Gamma$ and $y_\omega = \omega$-$\lim y \in X_\omega$, i.e. $\iota_*g$ coincides with the ultralimit of the constant sequence $g_n = g$. In particular $\iota_*\Gamma\subseteq \Gamma_\omega$. Now we take $g_\omega = \omega$-$\lim g_n \in \Gamma_\omega$, where $g_n\in \Gamma$ is an admissible sequence, i.e. $d(x,g_nx)\leq M$ for some $M$. The set $\Sigma_M(\Gamma, x)$ is compact by Ascoli-Arzela's Theorem (\cite{Kel17}, Chapter 7, Theorem 17) since $\Gamma$ is closed, so by Lemma 10.25 of \cite{DK18} there exists $g\in \Sigma_M(\Gamma, x)$ such that for every $\varepsilon > 0$ and $R\geq 0$ the set
	$$\lbrace n \in \mathbb{N} \text{ s.t. } d(g_ny,gy)<\varepsilon \text{ for all } y \in \overline{B}(x,R) \rbrace$$
	belongs to $\omega$. It is clear that the constant sequence $g$ defines the ultralimit isometry $g_\omega$, i.e. $\iota_*(g) = g_\omega$. Since $g\in \Gamma$ we conclude that $\iota_*\Gamma = \Gamma_\omega$.
\end{proof}

\subsection{Comparison between the two convergences}
We compare now the ultralimit convergence to the equivariant pointed Gromov-Hausdorff convergence. The analogous of the next results for the classical pointed Gromov-Hausdorff convergence can be found for instance in \cite{Jan17}.

\begin{prop}
	\label{GH---ultralimit}
	Suppose $(X_n,x_n,\Gamma_n) \underset{\textup{eq-pGH}}{\longrightarrow} (X,x,\Gamma)$ and call $(f_n,\varphi_n,\psi_n)$ some corresponding equivariant approximations. Let $\omega$ be a non-principal ultrafilter and let $(X_\omega, x_\omega, \Gamma_\omega)$ be the ultralimit triple. Then the map
	$$F \colon (X_\omega, x_\omega, \Gamma_\omega) \to (X,x,\Gamma)$$
	defined by sending $y_\omega = \omega$-$\lim y_n \in X_\omega$ to $\iota^{-1}(\omega$-$\lim f_n(y_n))$ is a well defined equivariant isometry. Here $\iota$ is the natural equivariant isometry of Lemma \ref{lemma-ultralimit-constant} between $(X,x,\Gamma)$ and the ultralimit of its constant sequence.
\end{prop}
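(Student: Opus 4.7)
The plan is to verify, in order: (a) that $F$ is a well-defined map $X_\omega\to X$; (b) that $F$ is a basepoint-preserving surjective isometry; (c) that the conjugation $F_*$ sends $\Gamma_\omega$ onto $\Gamma$. Throughout I fix $\varepsilon_n\searrow 0$ so that $(f_n,\varphi_n,\psi_n)$ is an equivariant $\varepsilon_n$-approximation, and I repeatedly use that for any fixed bound $M$ one has $\tfrac{1}{\varepsilon_n}>M$ for $\omega$-a.e.\ $n$.

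For (a), take $y_\omega=\omega\text{-}\lim y_n$ with $d(x_n,y_n)\leq M$ $\omega$-a.s. Then $y_n\in B(x_n,\tfrac{1}{\varepsilon_n})$ for $\omega$-a.e.\ $n$, so $f_n(y_n)$ is defined and the almost-isometry property gives $d(x,f_n(y_n))\leq M+\varepsilon_n$. Hence $f_n(y_n)$ is an admissible sequence for the ultralimit of the constant space $X$, and $\iota^{-1}(\omega\text{-}\lim f_n(y_n))\in X$ is well defined thanks to Lemma \ref{lemma-ultralimit-constant}. Independence of the representative follows from $d(f_n(y_n),f_n(y_n'))\leq d(y_n,y_n')+\varepsilon_n$, whose ultralimit vanishes.

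For (b), passing $|d(f_n(y_n),f_n(z_n))-d(y_n,z_n)|<\varepsilon_n$ to the ultralimit yields $d(F(y_\omega),F(z_\omega))=d(y_\omega,z_\omega)$, and $f_n(x_n)=x$ gives $F(x_\omega)=x$. For surjectivity, given $y\in X$ with $d(x,y)\leq R$, the almost-surjectivity of $f_n$ furnishes $y_n\in B(x_n,\tfrac{1}{\varepsilon_n})$ with $d(f_n(y_n),y)<\varepsilon_n$ for $\omega$-a.e.\ $n$; the almost-isometry then yields $d(x_n,y_n)\leq R+2\varepsilon_n$, so $y_\omega:=\omega\text{-}\lim y_n\in X_\omega$ and $F(y_\omega)=y$.

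For (c), given $g_\omega=\omega\text{-}\lim g_n\in\Gamma_\omega$ with $d(x_n,g_nx_n)\leq M$, applying $\varphi$-equivariance at $y_n=x_n$ together with $f_n(x_n)=x$ produces $d(x,\varphi_n(g_n)x)\leq M+2\varepsilon_n$, so $\varphi_n(g_n)$ is an admissible sequence in $\Gamma\subseteq\textup{Isom}(X)$. Because $\Gamma$ is closed, Lemma \ref{lemma-ultralimit-constant} identifies its ultralimit with some $g\in\Gamma$; applying $\varphi$-equivariance at a general admissible $y_n$ (noting that $d(x_n,g_ny_n)\leq M+d(x_n,y_n)$, so $g_ny_n$ eventually lies in $B(x_n,\tfrac{1}{\varepsilon_n})$) and passing to the ultralimit yields $F(g_\omega y_\omega)=gF(y_\omega)$, whence $F_*g_\omega=g\in\Gamma$. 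The reverse inclusion $\Gamma\subseteq F_*\Gamma_\omega$ is handled symmetrically by setting $g_n:=\psi_n(g)\in\Gamma_n$ and using $\psi$-equivariance to check admissibility and identify $F_*(\omega\text{-}\lim g_n)=g$. The main obstacle is precisely the step just performed: certifying $F_*g_\omega\in\Gamma$ rather than merely in $\overline{\Gamma}\subseteq\textup{Isom}(X)$ is where the standing hypothesis that $\Gamma$ is closed enters, and it is funneled through Lemma \ref{lemma-ultralimit-constant}; everything else reduces to letting $\varepsilon_n\to 0$ in the three defining inequalities of the equivariant approximations.
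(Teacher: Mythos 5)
Your proof is correct and follows essentially the same route as the paper's: the same four-step decomposition (well-definedness, isometric embedding, surjectivity, equivariance via $\varphi_n$ and $\psi_n$), with the closedness of $\Gamma$ entering exactly where the paper uses it, through Lemma \ref{lemma-ultralimit-constant}. The only noteworthy local difference is surjectivity: by fixing a sequence $\varepsilon_n\searrow 0$ of approximation scales you hit any given $y\in X$ exactly with a diagonal sequence, which lets you skip the paper's extra step of showing that $F(X_\omega)$ is both dense and closed (via completeness of the ultralimit); this is a mild simplification, not a different argument.
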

\begin{proof}
	We divide the proof in steps.\\
	\textbf{Good definition.} Given a point $y_\omega = \omega$-$\lim y_n \in X_\omega$ by definition there exists $M\geq 0$ such that $d(x_n,y_n)\leq M$ $\omega$.a.s. For $n$ big enough the map $f_n$ is defined on $y_n$ and it satisfies $d(f_n(y_n), f_n(x_n)) \leq M + 1$ and $f_n(x_n)=x$. Then the sequence $(f_n(y_n))$ is $\omega$-a.s. bounded and $\iota^{-1}(\omega$-$\lim f_n(y_n))$ is a well defined point of $X$.\\
	Now suppose $(y_n')$ is another sequence such that $\omega$-$\lim d(y_n,y_n') = 0$. For every $\varepsilon > 0$ we have $d(y_n,y_n')<\varepsilon$ $\omega$-a.s. Moreover, arguing as before, $d(f_n(y_n), f_n(y_n')) \leq d(y_n,y_n') + \varepsilon \leq 2\varepsilon$ $\omega$-a.s. By the arbitrariness of $\varepsilon > 0$ we get $d(\omega$-$\lim f_n(y_n), \omega$-$\lim f_n(y_n')) = 0$. In particular $F$ is well defined.\\
	\textbf{Isometric embedding.} We fix $y_\omega = \omega$-$\lim y_n, z_\omega = \omega$-$\lim z_n \in X_\omega$ and $\varepsilon > 0$. As usual all the conditions
	\begin{equation*}
		\begin{aligned}
			&d(\iota^{-1}(\omega\text{-}\lim f_n(y_n)), f_n(y_n)) < \varepsilon, \qquad \qquad d(\iota^{-1}(\omega\text{-}\lim f_n(z_n)), f_n(z_n)) < \varepsilon,\\
			&\vert d(y_\omega, z_\omega) - d(y_n,z_n)\vert <\varepsilon, \qquad \qquad \vert d(f_n(y_n), f_n(z_n)) - d(y_n,z_n) \vert < \varepsilon.
		\end{aligned}
	\end{equation*}
	hold $\omega$-a.s. Therefore 
	$$\vert d(F(y_\omega), F(z_\omega)) - d(y_\omega, z_\omega) \vert < 4\varepsilon.$$
	By the arbitrariness of $\varepsilon$ we conclude that $F$ is an isometric embedding.\\
	\textbf{Surjectivity.} We fix $y\in X$, $\varepsilon > 0$ and we set $L:= d(x,y)$. By definition there exists $y_n \in X_n$ such that $d(f_n(y_n),y)<\varepsilon$ $\omega$-a.s. The sequence $y_n$ is clearly admissible and defines a point $y_\omega = \omega$-$\lim y_n$ of $X_\omega$. Since $d(f_n(y_n), y) < \varepsilon$ $\omega$-a.s. then $F(y_\omega) = \omega$-$\lim f_n(y_n)$ satisfies $d(F(y_\omega), y) < 2\varepsilon$. This shows that $y$ belongs to the closure of $F(X_\omega)$. Every ultralimit space is a complete metric space (\cite{DK18}, Corollary 10.64), so $F$ is a closed map. Indeed if $F(y_\omega^k)$ is a convergent sequence, then it is Cauchy. Since $F$ is an isometric embedding then the sequence $(y_\omega^k)$ is Cauchy and therefore it converges. By continuity of $F$ we conclude that the limit point of the sequence $F(y_\omega^k)$ belongs to the image of $F$. Hence $F(X_\omega)$ is closed and $y\in F(X_\omega)$, showing that $F$ is surjective.\\
	\textbf{$F$ is equivariant.} It is clear that $F(x_\omega) = \iota^{-1}(\omega$-$\lim f_n(x_n)) = x$. It remains to show that $F_*(\Gamma_\omega) = \Gamma$.
	We take $g_\omega = \omega$-$\lim g_n \in \Gamma_\omega$. Then $F_*(g_\omega)$ acts on the point $y$ of $X$ as 
	$F_* g_\omega (y) = F\circ g_\omega \circ F^{-1}(y)$. Clearly $F^{-1}(y) = \omega$-$\lim y_n$, where $y_n$ is any sequence such that $\omega$-$\lim f_n(y_n) = y$. So $g_\omega \circ F^{-1} y = \omega$-$\lim g_n y_n$ by definition of $g_\omega$. Finally $F\circ g_\omega \circ F^{-1} = \iota^{-1}(\omega$-$\lim f_n(g_ny_n))$. Now we consider the isometries $\varphi_n(g_n)\in \Gamma$: they are defined for $n$ big enough since $g_\omega$ displaces $x_\omega$ of some finite quantity. We define the isometry $\iota_*^{-1}(\omega$-$\lim \varphi_n(g_n))$: it is an isometry of $X$, which is proper, and it belongs to $\Gamma$ by Lemma \ref{lemma-ultralimit-constant}. We have
	$$d(\iota_*^{-1}\omega\text{-}\lim \varphi_n(g_n)(y), F\circ g_\omega \circ F^{-1}(y)) = \omega\text{-}\lim d(\varphi_ng_n(y), f_n(g_ny_n))$$
	for every $y\in X$.
	Moreover
	\begin{equation*}
		\begin{aligned}
			d(\varphi_ng_n(y), f_n(g_ny_n)) &\leq d(\varphi_ng_n(y), \varphi_ng_n(f_n(y_n))) + d(\varphi_ng_n(f_n(y_n)), f_n(g_ny_n)) \\
			&\leq 2\varepsilon
		\end{aligned}
	\end{equation*}
	if $n$ is big enough. This means that  $F_* g_\omega = \iota^{-1} (\omega$-$\lim \varphi_n g_n) \in \Gamma$, so $F_*\Gamma_\omega \subseteq \Gamma$.
	Now we take $g\in \Gamma$ and we consider the isometries $\psi_n g \in \Gamma_n$ that are defined for $n$ big enough. The sequence $(\psi_n g) $ is admissible and therefore it defines a limit isometry $g_\omega$. For all $y\in X$ we have
	$F_*(g_\omega)(y) = Fg_\omega(y_\omega)$, where $y_\omega = \omega$-$\lim y_n$ and $y_n$ is a sequence such that $\iota^{-1}(\omega$-$\lim f_n(y_n)) = y$. Then $F_*(g_\omega)(y) = \iota^{-1}(\omega$-$\lim f_n(\psi_n(g)y_n))$. Once again $\omega$-a.s. we have
	\begin{equation*}
		\begin{aligned}
			d(F_*(g_\omega)(y), gy) &\leq d(\iota^{-1}(\omega\text{-}\lim f_n(\psi_n(g)y_n)), gf_n(y_n)) +\varepsilon\\
			&\leq d(f_n(\psi_n(g)y_n), gf_n(y_n)) + 2\varepsilon.
		\end{aligned}
	\end{equation*}
	We conclude that $F_*g_\omega = g$, so $F_*\Gamma_\omega = \Gamma$.
\end{proof}

\begin{prop}
	Let $(X_n,x_n,\Gamma_n)$ be a sequence of triples, $\omega$ be a non-principal ultrafilter and $(X_\omega, x_\omega, \Gamma_\omega)$ be the ultralimit triple. If $X_\omega$ is proper then there exists a subsequence $\lbrace n_k \rbrace \subseteq \mathbb{N}$ such that $(X_{n_k},x_{n_k},\Gamma_{n_k}) \underset{\textup{eq-pGH}}{\longrightarrow} (X_\omega,x_\omega,\Gamma_\omega)$.
\end{prop}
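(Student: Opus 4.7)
The plan is a diagonal extraction argument. For each $k \in \mathbb{N}$ I will show that there is an $\omega$-full-measure set $A_k$ of indices $n$ for which one can construct an equivariant $\varepsilon_k$-approximation between $(X_n,x_n,\Gamma_n)$ and $(X_\omega,x_\omega,\Gamma_\omega)$, with $\varepsilon_k = 1/k$. Standard extraction then yields $n_k \in A_k$ strictly increasing, giving the desired subsequence.

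\medskip

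The construction of the approximation relies on two finite ``skeletons'' for a fixed level $k$. Set $R_k = 1/\varepsilon_k + 1$. Because $X_\omega$ is proper, $\overline{B}(x_\omega, R_k)$ is compact, so I pick a finite $\varepsilon_k/8$-dense subset $S_k = \{y_\omega^1,\dots,y_\omega^{N_k}\}$ with $y_\omega^1 = x_\omega$, and write $y_\omega^i = \omega$-$\lim y_n^i$, with $y_n^1 = x_n$. Next, by Ascoli–Arzelà (using properness of $X_\omega$) and Proposition \ref{ultralimit-closed} (giving closedness of $\Gamma_\omega$), the set $\Sigma_{R_k}(\Gamma_\omega, x_\omega)$ is compact for the topology of uniform convergence on $\overline{B}(x_\omega, R_k)$; I pick a finite $\varepsilon_k/8$-dense subset $G_k = \{g_\omega^1,\dots,g_\omega^{M_k}\}$ in this sup-metric, writing $g_\omega^j = \omega$-$\lim g_n^j$ with $g_n^j \in \Gamma_n$.

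\medskip

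I then let $A_k$ be the set of $n$ satisfying three types of conditions: (i) the distances $d(y_n^i, y_n^l)$ and $d(g_n^j y_n^i, g_n^p y_n^l)$ agree with their $\omega$-counterparts up to $\varepsilon_k/8$; (ii) $\{y_n^i\}$ is $\varepsilon_k/4$-dense in $\overline{B}(x_n, R_k)$; (iii) every $g \in \Sigma_{R_k}(\Gamma_n, x_n)$ satisfies $\min_j \max_i d(g y_n^i, g_n^j y_n^i) \le \varepsilon_k/2$. The conditions in (i) are finite collections of $\omega$-a.s. equalities on distances, hence hold on an $\omega$-full set. For (ii) and (iii) I argue by contradiction using the ultralimit: a sequence of bad points $z_n$, respectively bad isometries $g_n$ with $d(x_n, g_n x_n) \le R_k$, produces admissible sequences whose ultralimits are elements of $X_\omega$, respectively of $\Sigma_{R_k}(\Gamma_\omega, x_\omega)$, contradicting the $\varepsilon_k/8$-density of $S_k$, respectively of $G_k$. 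Hence $\omega(A_k) = 1$. With these data in hand, for each $n \in A_k$ I define $f_n(z) = y_\omega^{i(z)}$ with $i(z) = \arg\min_i d(z, y_n^i)$, $\varphi_n(g) = g_\omega^{j(g)}$ with $j(g) = \arg\min_j \max_i d(g y_n^i, g_n^j y_n^i)$, and $\psi_n(h) = g_n^{j(h)}$ with $j(h) = \arg\min_j \max_i d(h y_\omega^i, g_\omega^j y_\omega^i)$. All the required inequalities (metric distortion, surjectivity up to $\varepsilon_k$, and the two equivariance estimates) are then routine chases using (i)–(iii) and the triangle inequality, the constants being bounded above by a fixed multiple of $\varepsilon_k$ which I can absorb by working with $\varepsilon_k/C$ at the outset.

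\medskip

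The main obstacle is condition (iii): the finite skeleton $\{g_n^j\}$ is chosen to approximate $\Gamma_\omega$, but a priori there is no reason an arbitrary element of $\Sigma_{R_k}(\Gamma_n,x_n)$ — for a fixed $n$ — should be close to one of the $g_n^j$. The ultralimit/compactness contradiction sketched above is precisely what makes this work, and it is the point where properness of $X_\omega$ is essentially used (both to get compactness of $\overline{B}(x_\omega, R_k)$ for $S_k$, and, via Proposition \ref{ultralimit-closed} and Ascoli–Arzelà, compactness of $\Sigma_{R_k}(\Gamma_\omega, x_\omega)$ for $G_k$).
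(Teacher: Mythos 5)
Your proposal is correct and follows essentially the same route as the paper: finite $\varepsilon$-nets of $\overline{B}(x_\omega,\cdot)$ (properness) and of $\Sigma_{\cdot}(\Gamma_\omega,x_\omega)$ (closedness of $\Gamma_\omega$ plus Ascoli--Arzel\`a), transfer of finitely many conditions to an $\omega$-full set of indices, nearest-point definitions of $f_n,\varphi_n,\psi_n$, and a diagonal choice of $n_k$. The only difference is that you explicitly justify, via the ultralimit contradiction argument, why the density conditions (your (ii) and (iii)) hold $\omega$-a.s., a step the paper simply asserts among its list of $\omega$-a.s.\ conditions.
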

\begin{obs}
	Notice that the subsequence $\lbrace n_k \rbrace$ may not belong to $\omega$.
\end{obs}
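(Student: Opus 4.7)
The plan is to exhibit a concrete example witnessing the remark: a sequence of triples whose ultralimit is proper and for which one can produce an eq-pGH convergent subsequence indexed by a set of $\omega$-measure zero.

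First I take a trivial constant sequence. Fix any pointed proper metric space $(X,x)$ with a closed subgroup $\Gamma\subseteq\textup{Isom}(X)$, and set $(X_n,x_n,\Gamma_n)=(X,x,\Gamma)$ for every $n\in\mathbb{N}$. By Lemma \ref{lemma-ultralimit-constant}, for any non-principal ultrafilter $\omega$ the ultralimit $(X_\omega,x_\omega,\Gamma_\omega)$ is equivariantly isometric to $(X,x,\Gamma)$ via the canonical map $\iota$; in particular $X_\omega$ is proper, so the hypothesis of the preceding proposition is satisfied.

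Next I note that the preceding proposition, applied to this constant sequence, admits trivial equivariant approximations: for every subsequence $\{n_k\}\subseteq\mathbb{N}$ one can take $f_{n_k}=\textup{id}_{B(x,1/\varepsilon)}$ and $\varphi_{n_k}=\psi_{n_k}=\textup{id}$ on $\Sigma_{1/\varepsilon}(\Gamma,x)$, which yields an equivariant $\varepsilon$-approximation between $(X_{n_k},x_{n_k},\Gamma_{n_k})$ and $(X_\omega,x_\omega,\Gamma_\omega)$ for every $\varepsilon>0$. Hence every subsequence of the constant sequence converges in the eq-pGH sense to $(X_\omega,x_\omega,\Gamma_\omega)$.

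Finally I pick the subsequence with the wrong ultrafilter-measure. Partition $\mathbb{N}$ into the even and odd integers, $\mathbb{N}=E\sqcup O$. Since $\omega$ is a finitely additive $\{0,1\}$-valued measure with $\omega(\mathbb{N})=1$, exactly one of $E,O$ has $\omega$-measure $1$; let $A$ denote the other, which is infinite with $\omega(A)=0$. Enumerate $A=\{n_k\}_{k\in\mathbb{N}}$ in increasing order. By the previous paragraph $(X_{n_k},x_{n_k},\Gamma_{n_k})\underset{\textup{eq-pGH}}{\longrightarrow}(X_\omega,x_\omega,\Gamma_\omega)$, while $\{n_k\}=A\notin\omega$ by construction. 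This gives the counterexample claimed by the remark, and there is no real obstacle to overcome beyond producing this elementary example.
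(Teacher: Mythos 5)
Your proposal is correct. The paper states this remark without any proof at all --- it is a one-line caveat attached to the preceding proposition, whose proof picks each index $n_k$ from an $\omega$-large set of indices admitting a $\tfrac{1}{k}$-approximation, with no mechanism forcing the resulting diagonal set $\{n_k\}$ to lie in $\omega$ (and indeed $\omega$ is only finitely additive, so even the intersection of those countably many $\omega$-large sets can fail to be $\omega$-large). Your constant-sequence example is a clean, elementary witness of the literal claim: using Lemma \ref{lemma-ultralimit-constant} the ultralimit is proper and equivariantly isometric to $(X,x,\Gamma)$, every subsequence converges via identity approximations, and the $\omega$-null member of the even/odd partition gives an infinite convergent subsequence outside $\omega$. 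The only thing worth adding is the contextual point above, namely that the remark is aimed at the specific diagonal construction in the proposition rather than at the mere existence of a bad subsequence; your example nevertheless fully substantiates the statement as written.
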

\begin{proof}
	We fix $\varepsilon > 0$. Since $X_\omega$ is proper we can select a $\frac{\varepsilon}{7}$-net $S^\varepsilon = \lbrace x_\omega = y_\omega^1,\ldots,y_\omega^{N_\varepsilon}\rbrace$ of $B(x_\omega, \frac{1}{\varepsilon})$, where $y_\omega^i = \omega$-$\lim y_n^i$ for $i=1,\ldots,N_\varepsilon$.
	Moreover $\Gamma_\omega$ is closed by Proposition \ref{ultralimit-closed}, so $\Gamma_{\omega,\frac{1}{\varepsilon}}(x_\omega)$ is relatively compact by Ascoli-Arzela's Theorem (\cite{Kel17}, Chapter 7, Theorem 17). Therefore we can find a finite subset $g_\omega^1,\ldots,g_\omega^{K_\varepsilon} \in \Gamma_{\omega, \frac{1}{\varepsilon}}(x_\omega)$, $g_\omega^i = \omega$-$\lim g_n^i$, with the property that for every $g_\omega\in \Gamma_{\omega, \frac{1}{\varepsilon}}(x_\omega)$ there exists $1\leq i \leq K_\varepsilon$ such that $d(g_\omega y_\omega, g_\omega^i y_\omega) \leq \frac{\varepsilon}{7}$ for all $y_\omega \in B(x_\omega, \frac{1}{\varepsilon})$.\\
	Now $\omega$-a.s. the following finite set of conditions hold:
	\begin{itemize}
		\item[-] $\vert d(y_\omega^i, y_\omega^j) - d(y_n^i, y_n^j) \vert \leq \frac{\varepsilon}{7}$ for all $i,j\in \lbrace 1,\ldots, N_\varepsilon\rbrace$;
		\item[-] the set $S_n^\varepsilon = \lbrace y_n^1,\ldots, y_n^{N_\varepsilon}\rbrace$ is a $\frac{2}{7}\varepsilon$-net of $B(x_n,\frac{1}{\varepsilon})$;
		\item[-] $g_n^i \in \Gamma_{n,\frac{1}{\varepsilon}}(x_n)$ for every $i=1,\ldots,K_\varepsilon$;
		\item[-] $\vert d(g_\omega^iy_\omega^j, y_\omega^l) - d(g_ny_n^j, y_n^l) \vert \leq \frac{\varepsilon}{7}$ for all $1\leq i \leq K_\varepsilon$ and all $1\leq j,l \leq N_\varepsilon$.
		\item[-] the set $\lbrace g_n^1,\ldots, g_n^{K_\varepsilon}\rbrace$ is a $\frac{2}{7}\varepsilon$-dense subset of $\Gamma_{n,\frac{1}{\varepsilon}}(x_n)$ with respect to the uniform distance.
	\end{itemize}
	For the natural numbers $n$ where these conditions hold we define 
	$$f_n\colon B\left(x_n,\frac{1}{\varepsilon}\right) \to B\left(x_\omega, \frac{1}{\varepsilon}\right)$$
	by sending the point $y_n$ to a point $y_\omega^i$ where $i$ is such that $d(y_n,y_n^i) \leq \frac{2}{7}\varepsilon$.
	For $y_n,z_n\in B(x_n,\frac{1}{\varepsilon})$ we have
	$$\vert d(f_n(y_n), f_n(z_n)) - d(y_n,z_n)\vert = \vert d(y_\omega^{i_1}, y_\omega^{i_2}) - d(y_n,z_n)\vert$$
	for some $i_1,i_2$. But for these indices $n$ we have $\vert d(y_\omega^{i_1}, y_\omega^{i_2}) - d(y_n^{i_1}, y_n^{i_2})\vert \leq \frac{2}{7}\varepsilon$, so we get 
	$$\vert d(f_n(y_n), f_n(z_n)) - d(y_n,z_n)\vert \leq \frac{6}{7}\varepsilon.$$
	Moreover we define 
	$$\psi_n \colon \Gamma_{\omega,\frac{1}{\varepsilon}}(x_\omega) \to \Gamma_{n,\frac{1}{\varepsilon}}(x_n)$$
	by sending $g_\omega$ to $g_n^i$, where $i \in \lbrace 1,\ldots, K_\varepsilon\rbrace$ is such that $d^\infty_{B(x_\omega, 1/\varepsilon)}(g_\omega, g_\omega^i) \leq \frac{\varepsilon}{7}$.\\
	Let $g_\omega \in \Gamma_{\omega,\frac{1}{\varepsilon}}(x_\omega)$, so $\Psi_n(g_\omega)=g_n^i$ as before. Let $y_n \in B(x_n,\frac{1}{\varepsilon})$ such that also $g_n^iy_n\in B(x_n,\frac{1}{\varepsilon})$. Let $j,l\in \lbrace 1,\ldots, N_\varepsilon\rbrace$ be such that $d(y_n,y_n^j) \leq \frac{2}{7}\varepsilon$ and $d(g_n^iy_n, y_n^l)\leq \frac{2}{7}\varepsilon$. 
	By definition $f_n(y_n)=y_\omega^i$, while $f_n(\psi_n(g_\omega)y_n) = y_\omega^l$. We have
	\begin{equation*}
		\begin{aligned}
			d(f_n(\psi_n(g_\omega)y_n), g_\omega f_n(y_n)) &= d(y_\omega^l, g_\omega y_\omega^j)\\
			&\leq d(y_\omega^l, g_\omega^i y_\omega^j) + \frac{2}{7}\varepsilon \\
			&\leq d(y_n^l, g_n^i y_n^j) + \frac{3}{7}\varepsilon \leq \varepsilon.
		\end{aligned}
	\end{equation*}
	Finally we define 
	$$\varphi_n \colon \Gamma_{n,\frac{1}{\varepsilon}}(x_n) \to \Gamma_{\omega, \frac{1}{\varepsilon}}(x_\omega)$$
	as $\varphi_n(g_n)=g_\omega^i$, where $d^\infty_{B(x_n,1/\varepsilon)}(g_n, g_n^i)\leq \frac{2}{7}\varepsilon$.\\
	Let $g_n \in \Gamma_{n,\frac{1}{\varepsilon}}(x_n)$ and let $\varphi_n(g_n)=g_\omega^i$. Now let $y_n\in B(x_n,\frac{1}{\varepsilon})$ such that also $g_ny_n \in B(x_n, \frac{1}{\varepsilon})$. Let $j,l\in \lbrace 1,\ldots, N_\varepsilon\rbrace$ such that $d(y_n, y_n^j)\leq \frac{2}{7}\varepsilon$ and $d(g_ny_n, y_n^l)\leq \frac{2}{7}\varepsilon$, so that $f_n(y_n)=y_\omega^j$ and $f_n(g_ny_n)=y_\omega^l$. Therefore
	\begin{equation*}
		\begin{aligned}
			d(f_n(g_ny_n), \varphi_n(g_n)f_n(y_n)) &= d(y_\omega^l, g_\omega^i y_\omega^j)\\
			&\leq d(y_n^l, g_n y_n^j) + \frac{3}{7}\varepsilon \\
			&\leq d(y_n^l, g_n y_n) + \frac{5}{7}\varepsilon \leq \varepsilon.
		\end{aligned}
	\end{equation*}
	This shows that $\omega$-a.s. we can find an equivariant $\varepsilon$-approximation between $(X_n,x_n,\Gamma_n)$ and $(X_\omega, x_\omega, \Gamma_\omega)$.
	For all integers $k$ we set $\varepsilon = \frac{1}{k}$ and we choose $n_k \in \mathbb{N}$ in the set of indices for which there exists an equivariant $\frac{1}{k}$-approximation as before. The sequence $(X_{n_k}, x_{n_k}, \Gamma_{n_k})$ satisfies the thesis.
\end{proof}

We summarize these properties in the following
\begin{prop}
	\label{prop-GH-ultralimit}
	Let $(X_n, x_n, \Gamma_n)$ be a sequence of triples and let $\omega$ be a non-principal ultrafilter.
	\begin{itemize}
		\item[(i)] If $(X_n,x_n,\Gamma_n) \underset{\textup{eq-pGH}}{\longrightarrow} (X,x,\Gamma)$ then $(X_\omega, x_\omega, \Gamma_\omega) \cong (X,x,\Gamma)$.
		\item[(ii)] If $X_\omega$ is proper then $(X_{n_k},x_{n_k},\Gamma_{n_k}) \underset{\textup{eq-pGH}}{\longrightarrow} (X_\omega,x_\omega,\Gamma_\omega)$ for some subsequence $\lbrace{n_k}\rbrace$.
	\end{itemize}
\end{prop}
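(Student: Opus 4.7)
The statement is a summary of the two propositions just proved, so the plan is simply to reduce each item to the corresponding preceding result, with a short remark reconciling the hypotheses.

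For item (i), I would invoke Proposition \ref{GH---ultralimit} directly. Given $(X_n,x_n,\Gamma_n) \underset{\textup{eq-pGH}}{\longrightarrow} (X,x,\Gamma)$, that proposition already constructs the map $F\colon (X_\omega,x_\omega,\Gamma_\omega) \to (X,x,\Gamma)$ and verifies the four properties needed to conclude it is an equivariant isometry (well-definedness, isometric embedding, surjectivity, and $F_*\Gamma_\omega = \Gamma$ with $F(x_\omega)=x$). That is by definition what $(X_\omega,x_\omega,\Gamma_\omega)\cong (X,x,\Gamma)$ means, so nothing further is needed. Note that we do not need to assume $X_\omega$ is proper here: once $F$ is built, $X_\omega$ inherits properness from $X$ through the isometry, and closedness of $\Gamma_\omega$ in Isom$(X_\omega)$ then follows a posteriori from Proposition \ref{ultralimit-closed}.

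For item (ii), the proof is a verbatim citation of the unnamed proposition immediately preceding the statement: under the properness hypothesis on $X_\omega$, that proposition exhibits a subsequence $\lbrace n_k\rbrace$ along which equivariant $\tfrac{1}{k}$-approximations between $(X_{n_k},x_{n_k},\Gamma_{n_k})$ and $(X_\omega,x_\omega,\Gamma_\omega)$ exist, which is precisely equivariant pointed Gromov-Hausdorff convergence along the subsequence.

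There is no real obstacle, since the two items are independent and each is a direct restatement of a previously established proposition. The only small point worth recording explicitly in the write-up is that the hypotheses line up: in (i) properness of $X_\omega$ is not assumed but emerges from the isometry $F$, while in (ii) properness of $X_\omega$ is assumed precisely so that the finite nets and compactness arguments used in the preceding proof are available.
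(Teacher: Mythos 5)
Your proposal is correct and matches the paper exactly: the paper introduces this proposition with ``We summarize these properties in the following'' and gives no separate proof, since (i) is precisely the content of Proposition \ref{GH---ultralimit} and (ii) is the unnamed proposition immediately preceding it. Your remarks on how the properness hypotheses line up are accurate and harmless additions.
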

\begin{cor}
	\label{cor-ultralimit-unique-limit}
	Let $(X_n, x_n, \Gamma_n)$ be a sequence of triples and suppose that there is a triple $(X,x,\Gamma)$, $X$ proper, such that $(X,x,\Gamma) \cong (X_\omega,x_\omega,\Gamma_\omega)$ for every non-principal ultrafilter $\omega$. Then $(X_n,x_n,\Gamma_n) \underset{\textup{eq-pGH}}{\longrightarrow} (X,x,\Gamma)$.
\end{cor}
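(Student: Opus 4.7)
The plan is to argue by contradiction, using Proposition \ref{prop-GH-ultralimit}(ii) combined with a pushforward-of-ultrafilter trick to turn an arbitrary "bad" subsequence into one whose ultralimit we can control via the hypothesis.

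Suppose, for the sake of contradiction, that $(X_n,x_n,\Gamma_n)$ does not converge in the equivariant pointed Gromov-Hausdorff sense to $(X,x,\Gamma)$. Then there exist $\varepsilon_0>0$ and an increasing sequence of indices $\{n_k\}\subseteq\mathbb{N}$ such that for every $k$ no equivariant $\varepsilon_0$-approximation exists between $(X_{n_k},x_{n_k},\Gamma_{n_k})$ and $(X,x,\Gamma)$. Consider the reindexed sequence $(Y_k,y_k,\Lambda_k):=(X_{n_k},x_{n_k},\Gamma_{n_k})$. My goal is to reach a contradiction by showing that some further subsequence of $(Y_k,y_k,\Lambda_k)$ does converge in eq-pGH to $(X,x,\Gamma)$.

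To do this I fix an arbitrary non-principal ultrafilter $\omega$ on $\mathbb{N}$ and form its pushforward $\omega':=\sigma_*\omega$ along the injection $\sigma\colon k\mapsto n_k$, defined by $\omega'(A)=\omega(\sigma^{-1}(A))$. Since $\sigma^{-1}(F)$ is finite whenever $F$ is finite (the $n_k$ go to infinity), $\omega'$ is again a non-principal $\{0,1\}$-valued finitely additive measure on $\mathbb{N}$. The key observation is that the ultralimit triple $(Y_\omega,y_\omega,\Lambda_\omega)$ of the subsequence with respect to $\omega$ is canonically equivariantly isometric to the ultralimit $(X_{\omega'},x_{\omega'},\Gamma_{\omega'})$ of the original sequence with respect to $\omega'$: $\omega'$ is concentrated on the image of $\sigma$, so bounded sequences modulo $\omega'$-null sequences correspond bijectively, and isometrically, to bounded sequences indexed by $k$ modulo $\omega$-null sequences (and the same bijection transports admissible sequences of isometries). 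The hypothesis therefore gives
\[
(Y_\omega,y_\omega,\Lambda_\omega)\;\cong\;(X_{\omega'},x_{\omega'},\Gamma_{\omega'})\;\cong\;(X,x,\Gamma),
\]
so in particular $Y_\omega$ is proper.

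Now Proposition \ref{prop-GH-ultralimit}(ii), applied to the sequence $(Y_k,y_k,\Lambda_k)$, produces a further subsequence $\{k_j\}$ such that $(Y_{k_j},y_{k_j},\Lambda_{k_j})\underset{\textup{eq-pGH}}{\longrightarrow}(Y_\omega,y_\omega,\Lambda_\omega)\cong(X,x,\Gamma)$. Consequently, for every $j$ sufficiently large there exists an equivariant $\varepsilon_0$-approximation between $(Y_{k_j},y_{k_j},\Lambda_{k_j})=(X_{n_{k_j}},x_{n_{k_j}},\Gamma_{n_{k_j}})$ and $(X,x,\Gamma)$, which contradicts the defining property of the subsequence $\{n_k\}$. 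The only subtle point is the canonical identification of the two ultralimits in the middle paragraph, but it is purely a bookkeeping check on representatives of classes; everything else is a direct application of Proposition \ref{prop-GH-ultralimit}(ii).
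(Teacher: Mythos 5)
Your proof is correct and follows essentially the same route as the paper's: both identify the ultralimit of an arbitrary subsequence of $(X_n,x_n,\Gamma_n)$ with the ultralimit of the full sequence along a non-principal ultrafilter concentrated on that subsequence, and then invoke Proposition \ref{prop-GH-ultralimit}(ii) to extract a convergent sub-subsequence. The only cosmetic differences are that you phrase the reduction to subsequences as a direct contradiction with a fixed $\varepsilon_0$ rather than appealing, as the paper does, to the metrizability of equivariant pointed Gromov-Hausdorff convergence, and that your pushforward ultrafilter $\sigma_*\omega$ is the same object as the paper's ultrafilter giving full measure to $\{n_k\}$.
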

\begin{proof}
	The equivariant pointed Gromov-Hausdorff convergence is metrizable (cp. \cite{Fuk86}), so it is enough to show that every subsequence has a subsequence that converges to $(X,x,\Gamma)$. Fix a subsequence $\lbrace n_k \rbrace$. The set $\{n_k\}$ is infinite, then there exists a non-principal ultrafilter $\omega$ containing it for which $\omega(\{n_k\})=1$ (cp. \cite{Jan17}, Lemma 3.2). The ultralimit with respect to $\omega$ of the sequence $(X_{n_k},x_{n_k},\Gamma_{n_k})$ is the same of the sequence $(X_n,x_n,\Gamma_n)$ since $\omega(\{n_k\})=1$. By Proposition \ref{prop-GH-ultralimit} we can extract a further subsequence $\lbrace n_{k_j} \rbrace$ that converges in the equivariant pointed Gromov-Hausdorff sense to $(X,x,\Gamma)$.
\end{proof}

\section{Gromov hyperbolic metric spaces}
\label{sec-Gromov}

We recall briefly the definition and some properties of Gromov-hyperbolic metric spaces. Good references are for instance \cite{BH09} and \cite{CDP90}.

\noindent Let $X$ be a geodesic metric space. Given  three points  $x,y,z \in X$,  the {\em Gromov product} of $y$ and $z$ with respect to $x$  is defined as
\vspace{-3mm}

$$(y,z)_x = \frac{1}{2}\big( d(x,y) + d(x,z) - d(y,z) \big).$$

\noindent The space $X$ is said {\em $\delta$-hyperbolic}, $\delta \geq 0$, if   for every four points $x,y,z,w \in X$   the following {\em 4-points condition} hold:
\begin{equation}\label{hyperbolicity}
	(x,z)_w \geq \min\lbrace (x,y)_w, (y,z)_w \rbrace -  \delta 
\end{equation}

\vspace{-2mm}
\noindent  or, equivalently,
\vspace{-5mm}

%Another characterization is given by the 4-point condition: a geodesic metric space $X$ is $\delta$-hyperbolic if and only if for every $x,y,z,w \in X$ it holds
\begin{equation}
	\label{four-points-condition}
	d(x,y) + d(z,w) \leq \max \lbrace d(x,z) + d(y,w), d(x,w) + d(y,z) \rbrace + 2\delta. 
\end{equation}

\noindent The space $X$ is   {\em Gromov hyperbolic} if it is $\delta$-hyperbolic for some $\delta \geq 0$. \\
This formulation of $\delta$-hyperbolicity is convenient when interested in taking limits. 
We will also make use of another classical characterization of  $\delta$-hyperbolicity. A {\em geodesic triangle} in $X$ is the union of three geodesic segments $[x,y], [y,z],$ $[z,x]$ and  is denoted by $\Delta(x,y,z)$. For every geodesic triangle there exists a unique {\em tripod} $\overline \Delta$ with vertices $\bar{x},\bar{y},\bar{z}$ such that the lengths of $[\bar{x}, \bar{y}], [\bar{y}, \bar{z}], [\bar{z}, \bar{x}]$ equal the lengths of $[x,y], [y,z], [z,x]$ respectively. There exists a unique map $f_\Delta$ from $\Delta(x,y,z)$ to the tripod $\overline \Delta$ that identifies isometrically the corresponding edges, 
and there are exactly three points $c_x \in [y,z], c_y \in [x,z], c_z\in [x,y]$ such that $f_\Delta (c_x) = f_\Delta (c_y) = f_\Delta(c_z) = c$, where $c$ is the center of the tripod $\overline \Delta$. By definition of $f_\Delta$ it holds: 
$$d(x,c_z) = d(x,c_y), \qquad d(y,c_x) = d(y,c_z), \qquad d(z,c_x)=d(z,c_y).$$
The triangle $\Delta(x,y,z)$ is called {\em $\delta$-thin}
%, where $\delta$ is a non-negative real number, 
if for every $u,v \in \Delta(x,y,z)$ such that $f_\Delta(u)=f_\Delta(v)$ it holds $d(u,v)\leq \delta$;  in particular the mutual distances between $c_x,c_y$ and $c_z$ are at most $\delta$. 
It is well-known that  every geodesic triangle in a geodesic {\em $\delta$-hyperbolic} metric space (as defined above)  is $4\delta$-thin.

%, and moreover satisfies the {\em Rips' condition}:
%\vspace{-1mm}
%\begin{equation}\label{rips}
%	[y,z] \subset \overline{B}([x,y]\cup [x,z], 4\delta).
%\end{equation}

\noindent Moreover, the last condition is equivalent to the above definition of hyperbolicity, up to slightly increasing the hyperbolicity constant  $\delta$ in  (\ref{hyperbolicity}).

\noindent The following is a basic property of Gromov-hyperbolic metric spaces.
\begin{lemma}[Projection Lemma, cp. Lemma 3.2.7 of \cite{CDP90}]
	\label{projection}
	Let $X$ be a $\delta$-hyperbolic metric space and let $x,y,z \in X$. For every geodesic segment $[y,z]$ we have $(y,z)_x \geq d(x, [y,z]) - 4\delta.$
\end{lemma}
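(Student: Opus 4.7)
The plan is to exploit the tripod picture of a geodesic triangle that the paper has just set up. Fix auxiliary geodesics $[x,y]$ and $[x,z]$ and combine them with the given segment $[y,z]$ to form a geodesic triangle $\Delta(x,y,z)$, with associated internal points $c_x\in[y,z]$, $c_y\in[x,z]$, $c_z\in[x,y]$ all mapping to the center of the tripod under $f_\Delta$. Since the paper has already recorded that every geodesic triangle in a $\delta$-hyperbolic space is $4\delta$-thin, I may freely use $d(c_z,c_x)\le 4\delta$ below.

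The first real step is the identity $(y,z)_x=d(x,c_z)$. To get it, write the three side lengths as
\[
d(x,y)=d(x,c_z)+d(c_z,y),\quad d(x,z)=d(x,c_y)+d(c_y,z),\quad d(y,z)=d(y,c_x)+d(c_x,z),
\]
and substitute into $(y,z)_x=\tfrac12(d(x,y)+d(x,z)-d(y,z))$. The tripod equalities $d(y,c_x)=d(y,c_z)$, $d(z,c_x)=d(z,c_y)$, and $d(x,c_z)=d(x,c_y)$ make all the ``$y$-'' and ``$z$-terms'' cancel, leaving $(y,z)_x=\tfrac12(d(x,c_z)+d(x,c_y))=d(x,c_z)$.

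To conclude, I use $c_x$, which lies on $[y,z]$ by construction, as an explicit test point for the distance from $x$ to the segment:
\[
d(x,[y,z])\;\le\;d(x,c_x)\;\le\;d(x,c_z)+d(c_z,c_x)\;\le\;(y,z)_x+4\delta,
\]
which is the desired inequality after rearrangement.

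I do not expect a genuine obstacle: the argument is a direct unpacking of the tripod/thin-triangle data plus the triangle inequality. The only point where one has to be careful is bookkeeping the hyperbolicity constant, since the paper's working definition is the $4$-points condition and the $4\delta$-thinness constant used here is the one supplied by the passage from the $4$-points form to the thin-triangle form. Were that conversion not already recorded in the text, an alternative route would be to apply the $4$-points inequality (\ref{four-points-condition}) to the quadruple $\{x,y,z,p\}$ for $p\in[y,z]$ realising $d(x,[y,z])$ and use $d(y,p)+d(p,z)=d(y,z)$ to extract the bound directly, but the tripod approach is considerably shorter.
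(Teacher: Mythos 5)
Your argument is correct: the identity $(y,z)_x=d(x,c_z)$ follows exactly as you compute from the tripod equalities, and combining it with $d(c_z,c_x)\le 4\delta$ (from the recorded $4\delta$-thinness) and $d(x,[y,z])\le d(x,c_x)$ gives the stated bound. The paper itself offers no proof here — it only cites Lemma 3.2.7 of \cite{CDP90} — and your tripod argument is precisely the standard one; the only implicit point is that you need geodesics $[x,y]$ and $[x,z]$ to exist, which is harmless since every space to which the lemma is applied in the paper is geodesic.
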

\noindent Let $X$ be a proper, $\delta$-hyperbolic metric space and $x$ be a point of $X$. \\
The {\em Gromov boundary} of $X$ is defined as the quotient 
$$\partial X = \lbrace (z_n)_{n \in \mathbb{N}} \subseteq X \hspace{1mm} | \hspace{1mm}   \lim_{n,m \to +\infty} (z_n,z_m)_{x} = + \infty \rbrace \hspace{1mm} /_\approx,$$
where $(z_n)_{n \in \mathbb{N}}$ is a sequence of points in $X$ and $\approx$ is the equivalence relation defined by $(z_n)_{n \in \mathbb{N}} \approx (z_n')_{n \in \mathbb{N}}$ if and only if $\lim_{n,m \to +\infty} (z_n,z_m')_{x} = + \infty$.  \linebreak
We will write $ z = [(z_n)] \in \partial X$ for short, and we say that $(z_n)$ {\em converges} to $z$. This definition  does not depend on the basepoint $x$. \\
There is a natural topology on $X\cup \partial X$ that extends the metric topology of $X$. 
The Gromov product can be extended to points $z,z'  \in \partial X$ by 
$$(z,z')_{x} = \sup_{(z_n) , (z_n') } \liminf_{n,m \to + \infty} (z_n, z_m')_{x}$$
where the supremum is taken among all sequences such that $(z_n) \in z$ and $(z_n')\in z'$.
For every $z,z',z'' \in \partial X$ it continues to hold
\begin{equation}
	\label{hyperbolicity-boundary}
	(z,z')_{x} \geq \min\lbrace (z,z'')_{x}, (z',z'')_{x} \rbrace - \delta.
\end{equation}
Moreover for all sequences $(z_n),(z_n')$ converging to  $z,z'$ respectively it holds
\begin{equation}
	\label{product-boundary-property}
	(z,z')_{x} -\delta \leq \liminf_{n,m \to + \infty} (z_n,z_m')_{x} \leq (z,z')_{x}.
\end{equation}
The Gromov product between a point $y\in X$ and a point $z\in \partial X$ is defined in a similar way and it  satisfies a condition analogue of  \eqref{product-boundary-property}.

\noindent Every geodesic ray $\xi$ defines a point  $\xi^+=[(\xi(n))_{n \in \mathbb{N}}]$  of the Gromov boundary $ \partial X$: we  say that $\xi$ {\em joins} $\xi(0) = y$ {\em to} $\xi^+ = z$, and we denote it by  $[y, z]$. Moreover for every $z\in \partial X$ and every $x\in X$ it is possible to find a geodesic ray $\xi$ such that $\xi(0)=x$ and $\xi^+ = z$. Any such geodesic ray is denoted as $\xi_{x,z} = [x,z]$ even if it is possibly not unique. 
Analogously, given different points $z = [(z_n)], z' = [(z'_n)] \in \partial X$ there always exists  a geodesic line $\gamma$ joining $z'$ to $z$, i.e. such that  $\gamma|_{[0, +\infty)}$ and $\gamma|_{(-\infty,0]}$ join   $\gamma(0)$ to $z,z'$ respectively (just  consider the limit $\gamma$ of the segments $[z_n,z'_n]$; notice that  all these segments intersect a ball   of fixed radius centered at $x_0$, since $(z_n,z'_m)_{x_0}$ is uniformly bounded above). We call $z$ and $z'$ the  {\em positive} and {\em negative endpoints} of $\gamma$, respectively,  denoted  $\gamma^\pm$.
The relation between Gromov product and geodesic ray is highlighted in the following well known lemma.
\begin{lemma}
	\label{product-rays}
	Let $X$ be a proper, $\delta$-hyperbolic metric space, $z,z'\in \partial X$ and $x\in X$. 
	\begin{itemize}
		\item[(i)] If $(z,z')_{x} \geq T$ then $d(\xi_{x,z}(T - \delta),\xi_{x,z'}(T - \delta)) \leq 4\delta$.
		\item[(ii)] If $d(\xi_{x,z}(T),\xi_{x,z'}(T)) < 2b$ then $(z,z')_{x} > T - b$, for all $b>0$.
	\end{itemize}
\end{lemma}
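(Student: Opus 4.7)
The common strategy for both items is to approximate the boundary Gromov product by Gromov products of points along the rays via \eqref{product-boundary-property}, and then work inside the finite geodesic triangles with vertices $x$, $\xi_{x,z}(n)$, $\xi_{x,z'}(m)$. Throughout, set $\xi = \xi_{x,z}$ and $\xi' = \xi_{x,z'}$.

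For (ii), I would just apply the triangle inequality along the rays. Setting $\eta := 2b - d(\xi(T),\xi'(T)) > 0$, for all $n,m \geq T$ one gets
\[
d(\xi(n),\xi'(m)) \leq (n-T) + d(\xi(T),\xi'(T)) + (m-T) = n + m - 2T + 2b - \eta,
\]
hence $(\xi(n),\xi'(m))_x \geq T - b + \eta/2$. Taking the liminf as $n,m \to +\infty$ and applying the right-hand inequality in \eqref{product-boundary-property} yields $(z,z')_x \geq T - b + \eta/2 > T - b$.

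For (i), I would use the $4\delta$-thin-triangle characterization of Gromov hyperbolicity recalled just above. By the left-hand inequality of \eqref{product-boundary-property} applied to the sequences $\xi(n) \to z$ and $\xi'(m) \to z'$,
\[
\liminf_{n,m \to +\infty}(\xi(n),\xi'(m))_x \geq (z,z')_x - \delta \geq T - \delta.
\]
For each $\varepsilon > 0$, pick $n,m$ large enough that $(\xi(n),\xi'(m))_x \geq T - \delta - \varepsilon$, and consider the geodesic triangle $\Delta(x,\xi(n),\xi'(m))$ whose two sides issuing from $x$ are $\xi|_{[0,n]}$ and $\xi'|_{[0,m]}$. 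In its tripod the common stem from $\bar{x}$ has length $(\xi(n),\xi'(m))_x$, so for any $s \leq T - \delta - \varepsilon$ the points on these two sides at distance $s$ from $x$ map to the same tripod point; by $4\delta$-thinness this gives $d(\xi(s),\xi'(s)) \leq 4\delta$. Taking $s = T - \delta - \varepsilon$ and using the triangle inequality along each ray to pass from $s$ to $T-\delta$ produces $d(\xi(T-\delta),\xi'(T-\delta)) \leq 4\delta + 2\varepsilon$, and letting $\varepsilon \to 0$ concludes.

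The only delicate point, and the reason for the two different shapes of the two items, is the $\delta$-slack built into the boundary Gromov product: one cannot conclude $(\xi(n),\xi'(m))_x \geq T$ directly from $(z,z')_x \geq T$, but only $\geq T - \delta$, and this loss is precisely the $\delta$ that appears inside $T - \delta$ in the statement of (i). Everything else reduces to routine triangle-inequality bookkeeping combined with the standard tripod description of $4\delta$-thin triangles.
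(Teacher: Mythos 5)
Your proof is correct and uses the same ingredients as the paper's: inequality \eqref{product-boundary-property} to pass between the boundary Gromov product and the Gromov products of points along the rays, the tripod description of $4\delta$-thin triangles for (i), and the triangle inequality along the rays for (ii). The only difference is cosmetic: for (i) the paper argues by contradiction (assuming $d(\xi_{x,z}(T-\delta),\xi_{x,z'}(T-\delta)) > 4\delta$, locating the branch point of the tripod below $T-\delta$, and deriving $(z,z')_x < T$), whereas you run the same tripod argument directly and handle the $\varepsilon$-bookkeeping a bit more carefully.
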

\begin{proof}
	Assume $(z,z')_{x} \geq T$ and suppose $d(\xi_{x,z}(T - \delta),\xi_{x,z'}(T - \delta)) > 4\delta$.
	Fix $S\geq T - \delta$ and consider the triangle $\Delta(x, \xi_{x,z}(S), \xi_{x,z'}(S))$. There exist $a\in [x,\xi_{x,z}(S)], b\in [x,\xi_{x,z'}(S)], c\in [\xi_{x,z}(S), \xi_{x,z'}(S)]$ such that $d(a,b)<\delta,\,\, d(b,c)<\delta,\,\, d(a,c)<\delta$ and
	$T_\delta := d(x,a)=d(x,b),$ $d(\xi_{x,z}(S),a) = d(\xi_{x,z}(S),c),\,\, d(\xi_{x,z'}(S),b) = d(\xi_{x,z'}(S),c).$
	Since this triangle is $4\delta$-thin we conclude that $T - \delta> T_\delta$. Moreover $d(\xi_{x,z}(S),\xi_{x,z'}(S)) = d(\xi_{x,z}(S),c) + d(c,\xi_{x,z'}(S)) = 2(S - T_\delta).$
	Hence
	\begin{equation*}
		\begin{aligned}
			(z,z')_{x} \leq \liminf_{S\to + \infty}\frac{1}{2}\big( 2S - d(\xi_{x,z}(S),\xi_{x,z'}(S)) \big) + \delta 
			%			&= \liminf_{S\to + \infty}\frac{1}{2}\big[2S - 2(S - T_\delta) \big] + \delta \\
			= T_\delta + \delta < T
		\end{aligned}
	\end{equation*}
	where we have used \eqref{product-boundary-property}. This contradiction concludes (i).	\\
	Now we assume $d(\xi_{x,z}(T),\xi_{x,z'}(T)) < 2b$. Applying again \eqref{product-boundary-property} and using $d(\xi_{x,z}(S),\xi_{x,z'}(S)) < 2(S-T) + 2b$ for all $S\geq T$ we obtain
	$$(z,z')_{x} \geq \liminf_{S\to + \infty}\frac{1}{2}\big( 2S - d(\xi_{x,z}(S),\xi_{x,z'}(S)) \big) > T + b.$$
\end{proof}
\begin{obs}
	\label{rmk-product-finite}
	We remark that the computation above shows also that if $z\in \partial X$, $y\in X$ and $(y,z)_x \geq T$ then $d(x,y)> T-\delta$ and $d(\gamma(T-\delta), \xi_{x,z}(T-\delta)) \leq 4\delta$ for every geodesic segment $\gamma = [x,y]$.
\end{obs}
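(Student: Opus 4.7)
The plan is to mimic the argument of Lemma~\ref{product-rays}(i), substituting the second boundary endpoint with the finite point $y$.

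For the bound $d(x,y) > T-\delta$: for every sequence $(z_n) \to z$ the triangle inequality $d(x,z_n) \leq d(x,y) + d(y,z_n)$ gives
\begin{equation*}
(y,z_n)_x \;=\; \tfrac{1}{2}\bigl(d(x,y) + d(x,z_n) - d(y,z_n)\bigr) \;\leq\; d(x,y).
\end{equation*}
Passing to the $\liminf$ in $n$ and then to the supremum over sequences representing $z$ yields $(y,z)_x \leq d(x,y)$, so $d(x,y) \geq T > T-\delta$; in particular $\gamma(T-\delta)$ is well defined on any geodesic segment $\gamma = [x,y]$.

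For the distance estimate I fix such a $\gamma$ and, for a large parameter $S$, consider the $4\delta$-thin geodesic triangle $\Delta(x,y,\xi_{x,z}(S))$ with sides $\gamma$, $[y,\xi_{x,z}(S)]$ and the subsegment $\xi_{x,z}|_{[0,S]}$ of the ray. The two tripod centers on $\gamma$ and $\xi_{x,z}|_{[0,S]}$ both lie at distance $T_S := (y,\xi_{x,z}(S))_x$ from $x$, so for every $t \leq T_S$ the points $\gamma(t)$ and $\xi_{x,z}(t)$ project to the same point of the tripod and therefore satisfy $d(\gamma(t), \xi_{x,z}(t)) \leq 4\delta$.

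It remains to let $t$ reach $T-\delta$, which is the only delicate step. Since $\xi_{x,z}(S) \to z$ in $\partial X$ as $S \to +\infty$, the analogue of \eqref{product-boundary-property} for a Gromov product between a finite and an ideal point gives
\begin{equation*}
\liminf_{S \to +\infty} T_S \;\geq\; (y,z)_x - \delta \;\geq\; T - \delta.
\end{equation*}
Hence for every $t < T-\delta$ there are arbitrarily large $S$ with $t < T_S$, so the tripod estimate yields $d(\gamma(t), \xi_{x,z}(t)) \leq 4\delta$; letting $t \nearrow T - \delta$ and invoking continuity of the two geodesics produces the desired inequality. I expect this limiting argument to be the main obstacle: the $\liminf$-based definition of the Gromov product at infinity forces an extra $\delta$-loss, and one must check carefully that the threshold really comes out as $T-\delta$ rather than something weaker like $T-2\delta$.
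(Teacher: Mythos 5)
Your argument is correct and follows essentially the same route as the paper: the remark is justified there by re-running the computation from the proof of Lemma~\ref{product-rays}(i) on the triangle $\Delta(x,y,\xi_{x,z}(S))$, using the tripod/thinness estimate together with the one-ideal-point analogue of \eqref{product-boundary-property} to control $T_S=(y,\xi_{x,z}(S))_x$, exactly as you do (you merely phrase the step directly where the paper argues by contradiction, and the $\delta$-loss you worried about is precisely the one already built into \eqref{product-boundary-property}). Note also that your first step yields the slightly stronger bound $d(x,y)\geq T$, which of course implies the stated $d(x,y)>T-\delta$.
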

\noindent The following is a standard computation, see for instance \cite{BCGS}.
\begin{lemma}
	\label{parallel-geodesics}
	Let $X$ be a proper, $\delta$-hyperbolic metric space. Then every two geodesic rays $\xi, \xi'$ with same endpoints at infinity are at distance at most $8\delta$, i.e. there exist $t_1,t_2\geq 0$ such that $t_1+t_2=d(\xi(0),\xi'(0))$ and  $d(\xi(t + t_1),\xi'(t+t_2)) \leq 8\delta$ for all $t\in \mathbb{R}$.
\end{lemma}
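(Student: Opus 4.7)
The strategy is to reparametrize each ray by its ``depth past the common direction at infinity.'' Set $y = \xi(0)$, $y' = \xi'(0)$, $\alpha = d(y,y')$, and let $z \in \partial X$ be the common endpoint $\xi^+ = \xi'^+$. Define offsets
$$t_1 = \lim_{N \to \infty}(y',\xi(N))_y, \qquad t_2 = \lim_{N \to \infty}(y,\xi(N))_{y'}.$$
The identity $(y',\xi(N))_y + (y,\xi(N))_{y'} = d(y,y')$ holds for every $N$ directly from the definition of the Gromov product, and each sequence is monotone non-decreasing (an easy triangle inequality check along a geodesic ray), so the limits exist in $[0,\alpha]$ and satisfy $t_1 + t_2 = \alpha$.

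Fix $t \geq 0$ and choose $N$ large enough that $t+t_1 \leq N$. The geodesic triangle $\Delta(y, y', \xi(N))$ is $4\delta$-thin, with tripod parameters $s_y^N := (y',\xi(N))_y$ and $s_{y'}^N := (y,\xi(N))_{y'}$ tending to $t_1$ and $t_2$, respectively. Thinness applied at parameter $\ell = t+t_1 \geq s_y^N$ shows that $\xi(t+t_1)$, which sits on the side $[y,\xi(N)]$, is within $4\delta$ of the point $r_N$ on the side $[y',\xi(N)]$ at distance $s_{y'}^N + (t+t_1 - s_y^N)$ from $y'$.

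Since $X$ is proper and $\xi(N) \to z$, Arzel\`a--Ascoli with a diagonal extraction produces a subsequence along which $[y',\xi(N)]$ converges locally uniformly to a geodesic ray $\tilde\xi$ based at $y'$. Its endpoint at infinity is $z$: for every $k \geq 0$ the Gromov product $(\tilde\xi(k),\xi(N))_{y'}$ remains $\geq k$ along the subsequence for $N$ large, and $\xi(N) \to z$, so $\tilde\xi(k) \to z$ in $\partial X$. Along the same subsequence $r_N \to \tilde\xi(t+t_2)$, because $s_{y'}^N + (t+t_1 - s_y^N) \to t_2 + t$; passing to the limit in the previous step yields $d(\xi(t+t_1), \tilde\xi(t+t_2)) \leq 4\delta$.

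Finally, $\tilde\xi$ and $\xi'$ are two geodesic rays sharing the basepoint $y'$ and the boundary endpoint $z$. Since $(z,z)_{y'} = +\infty$, Lemma \ref{product-rays}(i) applied with $z = z'$ immediately gives $d(\tilde\xi(s),\xi'(s)) \leq 4\delta$ for every $s \geq 0$. The triangle inequality then produces $d(\xi(t+t_1),\xi'(t+t_2)) \leq 4\delta + 4\delta = 8\delta$, as required. The main technical obstacle is verifying that the subsequential limit $\tilde\xi$ genuinely terminates at $z$ and that the tripod parameters $s_y^N, s_{y'}^N$ converge to $t_1, t_2$; once these are settled, everything else is the triangle inequality and a direct invocation of Lemma \ref{product-rays}.
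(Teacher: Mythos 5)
The paper gives no proof of this lemma --- it is dismissed as ``a standard computation'' with a pointer to \cite{BCGS} --- so there is nothing internal to compare against; I have checked your argument on its own terms and it is correct and complete. The structure (tripod comparison in the triangles $\Delta(\xi(0),\xi'(0),\xi(N))$, extraction of a limit ray $\tilde\xi$ from $\xi'(0)$ by properness, and then the same-basepoint case via Lemma \ref{product-rays}(i) with $z=z'$) is exactly the standard route, and your invocation of Lemma \ref{product-rays}(i) for two distinct rays with the same endpoint is legitimate since its proof never uses uniqueness of $\xi_{x,z}$. Two small remarks. First, your claim that \emph{each} of the sequences $(y',\xi(N))_y$ and $(y,\xi(N))_{y'}$ is monotone non-decreasing is wrong for the second one: since $(y',\xi(N))_y+(y,\xi(N))_{y'}=d(y,y')$ for every $N$, the second is non-increasing; both limits still exist (one is bounded above, the other below) and $t_1+t_2=\alpha$, so nothing downstream is affected, but the justification as written is off. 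Second, you only treat $t\geq 0$; the statement's ``for all $t\in\mathbb{R}$'' cannot be meant literally (in a tree the two rays are $2|t|$ apart for $-\min(t_1,t_2)\leq t<0$), so $t\geq 0$ is the correct content and your restriction is appropriate rather than a gap.
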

A curve $\alpha\colon [a,b] \to X$ is a $(1,\nu)$-quasigeodesic, $\nu\geq 0$, if 
$$\vert s - t \vert - \nu \leq d(\alpha(s),\alpha(t)) \leq \vert s - t \vert + \nu$$
for all $s,t\in [a,b]$. A subset $Y$ of $X$ is said $\lambda$-quasiconvex if every point of every geodesic segment joining every two points $y,y'$ of $Y$ is at distance at most $\lambda$ from $Y$.
The {\em quasiconvex hull} of a subset $C$ of $\partial X$ is the union of all the geodesic lines joining two points of $C$ and it is denoted by QC-Hull$(C)$. The following lemma justifies this name.
\begin{lemma}
	\label{lemma-quasigeodesic}
	Let $X$ be a proper, $\delta$-hyperbolic metric space and let $C$ be a subset of $\partial X$. Then \textup{QC-Hull}$(C)$ is $36\delta$-quasiconvex. Moreover if $C$ is closed then \textup{QC-Hull}$(C)$ is closed.
\end{lemma}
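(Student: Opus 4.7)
The plan is to prove the two statements separately: the closedness claim follows from an Arzela–Ascoli argument combined with the compatibility between uniform convergence of geodesics and boundary convergence, while the quasiconvexity is an iterated thin-triangle argument using two well-chosen ideal triangles.

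For the closedness part, suppose $C$ is closed and let $(y_n)\subseteq \text{QC-Hull}(C)$ converge to $y\in X$. For each $n$ pick a geodesic line $\gamma_n\colon \mathbb{R}\to X$ with $\gamma_n(0)=y_n$ and $\gamma_n^{\pm}\in C$. The family $(\gamma_n)$ is equicontinuous (in fact $1$-Lipschitz) and pointwise bounded on compacts because $\gamma_n(0)\to y$, so by properness of $X$ and Arzela–Ascoli a subsequence converges uniformly on compacts to a geodesic line $\gamma$ through $y$. The standard fact that in a proper Gromov-hyperbolic space uniform convergence of rays from nearby base points forces convergence of their endpoints in $\partial X$ (which is a direct consequence of Lemma \ref{product-rays}(ii)) yields $\gamma_n^{\pm}\to \gamma^{\pm}$ in $\partial X$. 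Since $C$ is closed, $\gamma^{\pm}\in C$, so $y=\gamma(0)\in \text{QC-Hull}(C)$.

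For the quasiconvexity, fix $y,y'\in \text{QC-Hull}(C)$ lying on geodesic lines $\gamma,\gamma'$ with endpoints $\{z_1,z_2\}, \{z_1',z_2'\}\subset C$ and let $p\in [y,y']$. The plan is to apply the $4\delta$-thinness condition twice, to two ideal triangles chosen so that at each step the only ``non-QC-Hull'' side is a new auxiliary ray that we then cut open with the next triangle. \emph{First triangle:} consider the triangle $T_1$ with finite vertices $y,y'$ and ideal vertex $z_1'$, whose sides are $[y,y']$, the subray of $\gamma'$ from $y'$ to $z_1'$, and an auxiliary ray $\xi=[y,z_1']$. An ideal triangle with one vertex at infinity is $C_1\delta$-thin, obtained by approximating it with finite $4\delta$-thin triangles $\Delta(y,y',\gamma'(T))$ and invoking Lemma \ref{projection} as $T\to +\infty$. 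So either $p$ is within $C_1\delta$ of $\gamma'\subseteq \text{QC-Hull}(C)$ and we are done, or $p$ is within $C_1\delta$ of a point $q\in\xi$. \emph{Second triangle:} in the latter case, consider the triangle $T_2$ with finite vertex $y$ and ideal vertices $z_1,z_1'$, whose sides are the subray of $\gamma$ from $y$ to $z_1$, the ray $\xi$, and a geodesic line $\eta$ joining $z_1$ and $z_1'$. Such a triangle with two ideal vertices is $C_2\delta$-thin by a similar limiting argument. Thus $q$ lies within $C_2\delta$ of $\gamma\cup\eta$, both of which are contained in $\text{QC-Hull}(C)$. Summing, $d(p,\text{QC-Hull}(C))\leq (C_1+C_2)\delta$.

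The main obstacle is the book-keeping of the thinness constants for ideal triangles, which are not directly stated in the excerpt. Starting from the fact that every finite geodesic triangle is $4\delta$-thin, each passage to an ideal vertex costs a controlled additional constant (coming from the deviation between $\xi_{x,z}(T)$ and $[x,\gamma(T)]$ quantified by Lemma \ref{product-rays}(i) and Remark \ref{rmk-product-finite}); arranging the two decompositions so that $C_1+C_2\leq 36$ gives the announced bound. The choice of decomposition is not canonical, but the two-triangle scheme above is robust: at each step one of the three sides of the ideal triangle already lies in $\text{QC-Hull}(C)$, so the only thing one has to estimate is how much an auxiliary ray deviates from the hull, which is precisely what thinness provides.
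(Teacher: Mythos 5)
Your closedness argument coincides with the paper's: extract a limiting geodesic line through $y$ by Arzel\`a--Ascoli and use that uniform convergence of geodesics forces convergence of their ideal endpoints (the paper simply cites Lemma 1.6 of \cite{BL12} for this last step instead of rederiving it from Lemma \ref{product-rays}, whose statement is for rays issuing from a common basepoint and so needs a small reparametrization to apply here). For quasiconvexity you take a genuinely different route. The paper replaces $y$ and $y'$ by points at distance $\leq 8\delta$ on the single geodesic line $[\gamma^+,\gamma'^+]\subseteq\textup{QC-Hull}(C)$ via Lemma \ref{parallel-geodesics}, observes that the resulting concatenation is a $(1,16\delta)$-quasigeodesic, and invokes quasi-geodesic stability to land exactly on $28\delta+8\delta=36\delta$; you instead run a two-step thin-triangle argument, first with one ideal vertex and then with two. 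Your scheme is structurally sound (at each step the sides other than the auxiliary ray $\xi$ lie in the hull, and the degenerate case $z_1=z_1'$ is handled directly by Lemma \ref{parallel-geodesics}), and it has the merit of avoiding quasi-geodesic stability altogether.

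The one substantive weakness is the constant. You assert, but do not verify, that the thinness constants can be arranged so that $C_1+C_2\leq 36$. Getting $C_1=4$ is easy if you approximate the first triangle along $\gamma'$ itself and \emph{define} $\xi$ as a sublimit of the segments $[y,\gamma'(T)]$; but for the triangle with two ideal vertices the naive accounting accumulates several applications of Lemma \ref{parallel-geodesics}, each costing $8\delta$, because the limit rays produced by the approximation need not coincide with the prescribed sides $\xi$, $\gamma|_{[y,z_1)}$ and $\eta$. Without carrying out that bookkeeping it is not clear the total stays below $36$, and since $36\delta$ is used quantitatively downstream (e.g.\ in Lemma \ref{lemma-generating-set} and Lemma \ref{lemma-bound-entropy-packing}) this is not a cosmetic point: as written your argument proves $K\delta$-quasiconvexity for some universal $K$, but not yet the lemma as stated.
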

\begin{proof}
	Let $x,y\in\text{QC-Hull}(C)$. By definition they belong to geodesics $\gamma_x,\gamma_y$ with both endpoints in $C$. We parametrize $\gamma_x$ and $\gamma_y$ in such a way that $d(\gamma_x(0),\gamma_y(0)) = d(\gamma_x,\gamma_y)$ and $x=\gamma_x(t_x)$, $y=\gamma_y(t_y)$ with $t_x,t_y \geq 0$. We take a geodesic $\gamma = [\gamma_x^+, \gamma_y^+] \subseteq \text{QC-Hull}(C)$. By Lemma \ref{parallel-geodesics} there are points $x',y'\in\gamma$ at distance at most $8\delta$ from $x$ and $y$ respectively. Therefore the path $\alpha = [x,x']\cup[x',y'] \cup [y',y]$ is a $(1,16\delta)$-quasigeodesic. By a standard computation in hyperbolic geometry (see for instance \cite{CavS20bis}, Proposition 3.5.(a)) we conclude that any point of $[x,y]$ is at distance at most $28\delta$ from a point of $\alpha$ and so at distance at most $36\delta$ from a point of $\gamma$. This concludes the proof of the first part since the points of $\gamma$ are in the quasiconvex hull of $C$.\
	Suppose now to have points $x_n\in \text{QC-Hull}(C)$ converging to $x_\infty \in X$. By definition $x_n \in \gamma_n$, where $\gamma_n$ is a geodesic line with endpoints $\gamma_n^\pm\in C$. The geodesics $\gamma_n$ converge uniformly on compact subsets to a geodesic $\gamma_\infty$ containing $x_\infty$, since $X$ is proper.
	The sequences $\gamma_n^\pm$ converge to the endpoints of $\gamma_\infty$ (cp. Lemma 1.6 of \cite{BL12}). Using the fact that $C$ is closed we conclude that $\gamma_\infty^\pm \in C$, i.e. $x_\infty \in \text{QC-Hull}(C)$.
\end{proof}

We need the following approximation result.
\begin{lemma}
	\label{approximation-ray-line}
	Let $X$ be a proper, $\delta$-hyperbolic metric space. Let $C\subseteq \partial X$ be a subset with at least two points and $x\in \textup{QC-Hull}(C)$. Then for every $z\in C$ there exists a geodesic line $\gamma$ with endpoints in $C$ such that $d(\xi_{x,z}(t), \gamma(t)) \leq 14\delta$ for every $t\geq 0$. In particular $d(\xi_{x,z}(t), \textup{QC-Hull}(C)) \leq 14\delta$.
\end{lemma}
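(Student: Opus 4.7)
The plan is to produce $\gamma$ explicitly, starting from an ambient geodesic line through $x$. By definition of $\textup{QC-Hull}(C)$, $x$ lies on some geodesic line $\gamma_0$ with endpoints $w^-,w^+\in C$. If $z\in\{w^-,w^+\}$, say $z=w^+$, parametrizing $\gamma_0$ so that $\gamma_0(0)=x$ and $\gamma_0^+=z$ makes $\gamma_0|_{[0,\infty)}$ and $\xi_{x,z}$ two geodesic rays sharing both basepoint and ideal endpoint, so Lemma~\ref{parallel-geodesics} applied with $t_1=t_2=0$ immediately yields $d(\xi_{x,z}(t),\gamma_0(t))\leq 8\delta$. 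Henceforth assume $z\notin\{w^-,w^+\}$.

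First I would bound the relevant boundary Gromov products. Testing the definition with the sequences $\gamma_0(\mp n)\to w^{\mp}$ gives $(\gamma_0(-n),\gamma_0(n))_x=0$ identically, so \eqref{product-boundary-property} forces $(w^-,w^+)_x\leq\delta$. The boundary hyperbolicity \eqref{hyperbolicity-boundary} applied to the triple $w^-,w^+,z$ then gives $\min\{(w^-,z)_x,(w^+,z)_x\}\leq 2\delta$. Relabel so that $w:=w^-$ realizes this minimum and let $\gamma$ be any geodesic line from $w$ to $z$; this line exists since $w\neq z$, and it lies in $\textup{QC-Hull}(C)$ because its endpoints do.

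Next I would quantify $d(x,\gamma)$. Approximating $w,z$ by $\gamma(\mp n)$ inside $\gamma$ and applying the projection Lemma~\ref{projection} to the segment $\gamma|_{[-n,n]}\subseteq\gamma$ (which contains the foot of any projection of $x$ onto $\gamma$ for $n$ large), then passing to the liminf and using \eqref{product-boundary-property} together with $(w,z)_x\leq 2\delta$, one obtains $d(x,\gamma)\leq(w,z)_x+4\delta\leq 6\delta$. Pick $x'\in\gamma$ realizing this distance and parametrize $\gamma$ so that $\gamma(0)=x'$ and $\gamma^+=z$.

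To conclude, apply Lemma~\ref{parallel-geodesics} to the two rays $\xi_{x,z}$ and $\gamma|_{[0,\infty)}$, which share the ideal endpoint $z$: this produces $t_1,t_2\geq 0$ with $t_1+t_2=d(x,x')\leq 6\delta$ and $d(\xi_{x,z}(u+t_1),\gamma(u+t_2))\leq 8\delta$ for $u\geq 0$. Two bookkeeping steps now yield $d(\xi_{x,z}(s),\gamma(s))\leq 14\delta$ for every $s\geq 0$: for $s\geq t_1$ substitute $u=s-t_1$ and pay at most $|t_1-t_2|\leq 6\delta$ to slide along $\gamma$, giving $8\delta+|t_1-t_2|\leq 14\delta$; for $s\in[0,t_1]$ use the lemma at $u=0$ and route through $\xi_{x,z}(t_1)$ and $\gamma(t_2)$, so that the total cost is at most $(t_1-s)+8\delta+|t_2-s|\leq(t_1+t_2)+8\delta\leq 14\delta$ (splitting according to whether $s\leq t_2$ or $s\geq t_2$). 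The main subtlety, and where a naive triangle inequality through $x$ and $x'$ is too lossy, is precisely the range $s\in[0,t_1]$; the decisive trick is to use the parallel-geodesics pairing $\xi_{x,z}(t_1)\leftrightarrow\gamma(t_2)$ rather than the endpoints $x\leftrightarrow x'$ as the intermediate reference point. The ``in particular'' statement is immediate since $\gamma\subseteq\textup{QC-Hull}(C)$.
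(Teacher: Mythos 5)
Your proposal is correct and follows essentially the same route as the paper: bound $(w^-,w^+)_x\leq\delta$ via \eqref{product-boundary-property}, use \eqref{hyperbolicity-boundary} to find an endpoint $w$ with $(w,z)_x\leq 2\delta$, apply the projection Lemma~\ref{projection} to get $d(x,\gamma)\leq 6\delta$ for the line $\gamma=[w,z]$, and finish with Lemma~\ref{parallel-geodesics}. The only differences are cosmetic: you treat the degenerate case $z\in\{w^-,w^+\}$ separately and spell out the $t_1,t_2$ bookkeeping that the paper leaves implicit in its final citation of Lemma~\ref{parallel-geodesics}.
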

\begin{proof}
	Since $x\in \textup{QC-Hull}(C))$ it exists a geodesic line $\eta$ joining two points $\eta^\pm$ of $C$ such that $x\in \eta$. Of course we have $(\eta^+,\eta^-)_x \leq \delta$, so by \eqref{hyperbolicity-boundary} we get
	$$\delta \geq (\eta^+,\eta^-)_x \geq \min\lbrace (\eta^+,z)_x, (\eta^-,z)_x\rbrace - \delta.$$
	Therefore one of the two values $(\eta^+,z)_x$, $(\eta^-,z)_x$ is $\leq 2\delta$. Let us suppose it is the first one. We consider a geodesic line $\gamma$ joining $\eta^+$ and $z$. By Lemma \ref{projection} we get
	$$d(x,\gamma([-S,S])) \leq (\gamma(-S), \gamma(S))_x + 4\delta$$
	for every $S\geq 0$. Taking $S\to +\infty$ the points $\gamma(-S)$ and $\gamma(S)$ converge respectively to $\eta^+$ and $z$. Therefore by \eqref{product-boundary-property} we get
	$d(x,\gamma) \leq 6\delta$. If we parametrize $\gamma$ so that $d(x,\gamma(0)) \leq 6\delta$ then $d(\xi_{x,z}(t), \gamma(t)) \leq 14\delta$ for every $t\geq 0$, by Lemma \ref{parallel-geodesics}.
\end{proof}

The {\em{Busemann function associated to $z\in \partial X$ with basepoint $x$}} is 
$$B_z(x,\cdot)\colon X \to \mathbb{R},\qquad y \mapsto \lim_{T\to +\infty} (d(\xi_{x,z}(T), y) - T).$$
It depends on the choice of the geodesic ray $\xi_{x,z}=[x,z]$ but two maps obtained taking two different geodesic rays are at bounded distance and the bound depends only on $\delta$. Every Busemann function is $1$-Lipschitz.
%If $(X,\sigma)$ is a proper, $\delta$-hyperbolic GCB-space then for any two points $z,z'\in \partial_GX = \partial_\sigma X = \partial X$ there exists a $\sigma$-geodesic line joining $z$ and $z'$. Given $C\subseteq \partial X$ we denote by QC-Hull$_\sigma(C)$ the union of all $\sigma$-geodesic lines joining two points of $C$.
%\begin{lemma}
%	\label{bicombing-qchull}
%	Let $(X,\sigma)$ be a proper, $\delta$-hyperbolic \textup{GCB}-space. Then
%	\begin{itemize}
%		\item[(i)] every two geodesic lines $\gamma, \gamma'$ with same endpoints at infinity are at distance at most $8\delta$;
%		\item[(ii)] for every $C\subseteq \partial X$ and for every $x\in \textup{QC-Hull}(C)$ there exists $x' \in \textup{QC-Hull}_\sigma(C)$ with $d(x,x')\leq 8\delta$.
%	\end{itemize}
%\end{lemma}
%\begin{proof}
%	The first statement is classic, see for instance Lemma 3.1 of \cite{CavS20bis}. The second one follows from the fact that when $X$ is proper then for all $z,z'\in \partial X$ there exists a $\sigma$-geodesic line joining them.
%\end{proof}

\subsection{Visual metrics}
\label{subsubsec-visual-metrics}
When $X$ is a proper, $\delta$-hyperbolic metric space it is known that the boundary $\partial X$ is metrizable. A metric $D_{x,a}$ on $\partial X$ is called a {\em visual metric} of center $x \in X$ and parameter $a\in\left(0,\frac{1}{2\delta\cdot\log_2e}\right)$ if there exists $V> 0$ such that for all $z,z' \in \partial X$ it holds
\begin{equation}
	\label{visual-metric}
	\frac{1}{V}e^{-a(z,z')_{x}}\leq D_{x,a}(z,z')\leq V e^{-a(z,z')_{x}}.
\end{equation}
A visual metric is said {\em standard} if for all $z,z'\in \partial X$ it holds
$$(3-2e^{a\delta})e^{-a(z,z')_{x}}\leq D_{x,a}(z,z')\leq e^{-a(z,z')_{x}}.$$
For all $a$ as before and $x\in X$ there exists always a standard visual metric of center $x$ and parameter $a$, see \cite{Pau96}.
The {\em generalized visual ball} of center $z \in \partial X$ and radius $\rho \geq 0$ is
$$B(z,\rho) = \bigg\lbrace z' \in \partial X \text{ s.t. } (z,z')_{x} > \log \frac{1}{\rho} \bigg\rbrace.$$
It is comparable to the metric balls of the visual metrics on $\partial X$.
\begin{lemma}
	\label{comparison-balls}
	Let $D_{x,a}$ be a visual metric of center $x$ and parameter $a$ on $\partial X$. Then for all $z\in \partial X$ and for all $\rho>0$ it holds
	$$B_{D_{x,a}}\left(z, \frac{1}{V}\rho^a\right)  \subseteq B(z,\rho)\subseteq B_{D_{x,a}}(z, V\rho^a ).$$
\end{lemma}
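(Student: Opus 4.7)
The plan is to reduce everything to a single quantity, namely $e^{-a(z,z')_x}$. The defining condition $z'\in B(z,\rho)$ reads $(z,z')_x > \log(1/\rho)$; multiplying by $-a<0$ and exponentiating, this is equivalent to $e^{-a(z,z')_x} < \rho^a$. The visual-metric sandwich \eqref{visual-metric} controls $D_{x,a}(z,z')$ above and below by exactly this same quantity $e^{-a(z,z')_x}$, up to the multiplicative constant $V$. So both inclusions should drop out by a one-line chain of inequalities, with no appeal to hyperbolicity beyond what is already packaged into the existence of the visual metric.

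For the first inclusion, I would take $z'$ with $D_{x,a}(z,z') < \tfrac{1}{V}\rho^a$ and combine this with the lower bound $\tfrac{1}{V} e^{-a(z,z')_x} \leq D_{x,a}(z,z')$ from \eqref{visual-metric}; the factors of $\tfrac{1}{V}$ cancel, giving $e^{-a(z,z')_x} < \rho^a$, i.e. $(z,z')_x > \log(1/\rho)$, so $z'\in B(z,\rho)$. For the second inclusion, I would start from $z'\in B(z,\rho)$, read this as $e^{-a(z,z')_x} < \rho^a$ as above, and then apply the upper bound $D_{x,a}(z,z') \leq V\, e^{-a(z,z')_x}$ to get $D_{x,a}(z,z') < V\rho^a$.

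The only sanity check worth flagging is the degenerate case $z=z'$, where $(z,z')_x = +\infty$, so $z$ lies in $B(z,\rho)$ for every $\rho>0$ and also in every metric ball centered at itself; both inclusions are then trivial on the diagonal. There is no real obstacle here: the lemma is essentially a rewriting of the definitions, and the only point requiring care is the sign flip when multiplying by $-a$ and the bookkeeping of the constant $V$ on the two sides.
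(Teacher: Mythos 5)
Your proof is correct and is essentially identical to the paper's: both inclusions follow by combining the definition of the generalized visual ball with the two-sided bound \eqref{visual-metric}, exactly as you do. No further comment is needed.
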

\begin{proof}
	If $z'\in B(z,\rho)$ then $(z,z')_{x} > \log \frac{1}{\rho}$, so $D_{x,a}(z,z')\leq Ve^{-a(z,z')_{x}} < V\rho^a.$ If $z'\in B_{D_{x,a}}(z,\frac{1}{V}\rho^a)$ then
	$\frac{1}{V}e^{-a(z,z')_{x}} \leq D_{x_0,a}(z,z') < \frac{1}{V}\rho^a$, i.e. $z'\in B(z,\rho)$.
\end{proof}
\noindent It is classical that generalized visual balls are related to shadows, whose definition is the following. The shadow of radius $r>0$ casted by a point $y\in X$ with center $x\in X$ is the set:
$$\text{Shad}_x(y,r) = \lbrace z\in \partial X \text{ s.t. } [x,z]\cap B(y,r) \neq \emptyset \text{ for all rays } [x,z]\rbrace.$$
\begin{lemma}
	\label{shadow-ball}
	Let $X$ be a proper, $\delta$-hyperbolic metric space. Let $z\in \partial X$, $x\in X$ and $T\geq 0$. Then 
	\begin{itemize}
		\item[(i)] $B(z,e^{-T}) \subseteq \textup{Shad}_{x}\left(\xi_{x,z}\left(T\right), 7\delta\right)$;
		\item[(ii)] $\textup{Shad}_{x}\left(\xi_{x,z}\left(T\right), r\right) \subseteq B(z, e^{-T + r})$ for all $r> 0$.
	\end{itemize}
\end{lemma}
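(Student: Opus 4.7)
The plan is to deduce both inclusions directly from Lemma \ref{product-rays}, which converts information about the Gromov product into metric information about geodesic rays and vice versa. Both parts reduce essentially to an application of one of its halves together with the reverse triangle inequality.

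For (i), I would start with $z' \in B(z, e^{-T})$, which by definition of the generalized visual ball means $(z,z')_x > T$. To verify $z' \in \textup{Shad}_x(\xi_{x,z}(T), 7\delta)$ I must show that every geodesic ray $\eta$ from $x$ to $z'$ meets the open ball $B(\xi_{x,z}(T), 7\delta)$. Assuming first $T \geq \delta$, I apply Lemma \ref{product-rays}(i) with the pair of rays $\xi_{x,z}$ and $\eta$ (the notation of the lemma allows any choice of rays with the prescribed endpoints) to obtain $d(\xi_{x,z}(T-\delta), \eta(T-\delta)) \leq 4\delta$. Combined with $d(\xi_{x,z}(T-\delta), \xi_{x,z}(T)) = \delta$, the triangle inequality yields $d(\eta(T-\delta), \xi_{x,z}(T)) \leq 5\delta < 7\delta$, so $\eta(T-\delta)$ furnishes the required intersection point.

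For (ii), I pick $z' \in \textup{Shad}_x(\xi_{x,z}(T), r)$ and a single ray $\xi_{x,z'}$. By the shadow condition $\xi_{x,z'}$ passes through the ball, so there is $p = \xi_{x,z'}(s)$ with $d(p, \xi_{x,z}(T)) < r$. The reverse triangle inequality applied at $x$ gives $|s-T| = |d(x,p) - d(x,\xi_{x,z}(T))| < r$, and hence $d(\xi_{x,z'}(T), p) = |T-s| < r$. One further triangle inequality produces $d(\xi_{x,z'}(T), \xi_{x,z}(T)) < 2r$, and Lemma \ref{product-rays}(ii) with $b = r$ then delivers $(z,z')_x > T-r$, which is precisely $z' \in B(z, e^{-T+r})$.

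The only delicate point is the small-$T$ regime. In (i), if $T < \delta$ the parameter $T-\delta$ is negative and Lemma \ref{product-rays}(i) is unavailable; but then $d(x, \xi_{x,z}(T)) = T < 7\delta$, so the basepoint $x = \eta(0)$ lies in $B(\xi_{x,z}(T), 7\delta)$ and serves as the intersection point for every ray. In (ii), if $T < r$ the claim is automatic: Gromov products are non-negative, so $(z,z')_x \geq 0 > T-r$ trivially places every $z' \in \partial X$ in $B(z, e^{-T+r})$. These edge cases are the sole technical obstacle, and neither requires more than a one-line argument.
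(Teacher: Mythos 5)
Your proof is correct and follows essentially the same route as the paper: both inclusions are obtained by applying Lemma \ref{product-rays} in the appropriate direction, with (i) using part (i) of that lemma to place a point of the ray $[x,z']$ near $\xi_{x,z}(T)$ and (ii) using part (ii) after bounding $d(\xi_{x,z'}(T),\xi_{x,z}(T))<2r$. Your treatment is in fact slightly more careful than the paper's, since you justify the $2r$ estimate via the reverse triangle inequality and handle the small-$T$ edge cases that the paper leaves implicit.
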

\begin{proof}
	Let $z'\in B(z,e^{-T})$, i.e. $(z,z')_{x}> T$. By Lemma \ref{product-rays} we know that $d(\xi_{x,z}(T - \delta), \xi_{x,z'}(T - \delta)) \leq 4\delta.$
	So $d(\xi_{x,z'}(T), \xi_{x,z}(T)) \leq 6\delta < 7\delta$.
	This implies $z'\in \text{Shad}_{x}(\xi_{x,z}(T),7\delta)$, showing (i).\\
	Now we fix $z'\in \text{Shad}_{x}(\xi_{x,z}(T),r)$, which means that every geodesic ray $\xi_{x,z'}$ passes through $B(\xi_{x,z}(T), r)$, so $d(\xi_{x,z'}(T),\xi_{x,z}(T)) < 2r$. By Lemma \ref{product-rays} we conclude $(z,z')_{x} > T - r$, implying (ii).
\end{proof}

%The {\em upper} and {\em lower Minkowski dimension} of a subset $C$ of $\partial X$ with respect to a visual metric $D_{x,a}$ are respectively classically defined as
%$$\overline{\text{MD}}_{D_{x,a}}(C) = \limsup_{\rho\to 0} \frac{\log \text{Cov}_{D_{x,a}}(C, \rho)}{\log\frac{1}{\rho}},$$
%$$\underline{\text{MD}}_{D_{x,a}}(C) = \liminf_{\rho\to 0} \frac{\log \text{Cov}_{D_{x,a}}(C, \rho)}{\log\frac{1}{\rho}},$$
%where the covering is considered with respect to the metric $D_{x,a}$. Covering $C$ with generalized visual balls we define the {\em upper} and {\em lower visual Minkowski dimension} as
%$$\overline{\text{MD}}(C) = \limsup_{\rho\to 0} \frac{\log \text{Cov}(C, \rho)}{\log\frac{1}{\rho}}, \qquad \underline{\text{MD}}(C) = \liminf_{\rho\to 0} \frac{\log \text{Cov}(C, \rho)}{\log\frac{1}{\rho}}$$
%respectively, where $\text{Cov}(C, \rho)$ denotes the minimal number of generalized visual balls of radius $\rho$ needed to cover $C$.
%%Also in this case if we put $\rho = e^{-T}$ we have
%%$$\overline{\text{MD}}(\partial X) = \limsup_{T\to +\infty} \frac{1}{T}\log \text{Cov}(\partial X, e^{-T}),$$
%%and the analogous formula for the lower Minkowski dimension.
%Lemma \ref{comparison-balls} implies that the visual Minkowski dimensions do not depend on $x$, indeed:
%\begin{lemma}
%	Let $D_{x,a}$ be a visual metric of center $x$ and parameter $a$. Then for every $C\subseteq \partial X$:
%	$$\overline{\textup{MD}}(C) = a\cdot \overline{\textup{MD}}_{D_{x,a}}(C), \qquad \underline{\textup{MD}}(C) = a\cdot \underline{\textup{MD}}_{D_{x,a}}(C).$$
%\end{lemma}

A compact metric space $Z$ is {\em $(A,s)$-Ahlfors regular} if there exists a probability measure $\mu$ on $Z$ such that
$$\frac{1}{A}\rho^s \leq \mu(B(z,\rho)) \leq A\rho^s$$
for all $z\in Z$ and all $0\leq \rho \leq \text{Diam}(Z)$, where Diam$(Z)$ is the diameter of $Z$.
In case $Z=\partial X$ we say that $Z$ is {\em visual $(A,s)$-Ahlfors regular} if there exists a probability measure $\mu$ on $\partial X$ such that
$$\frac{1}{A}\rho^s \leq \mu(B(z,\rho)) \leq A\rho^s$$
for all $z\in Z$ and all $0\leq \rho \leq 1$, where $B(z,\rho)$ is the generalized visual ball of center $z$ and radius $\rho$.
From Lemma \ref{comparison-balls} it follows immediately:
\begin{lemma}
	If $\partial X$ is $(A,s)$-Ahlfors regular with respect to a visual metric of center $x$ and parameter $a$, then it is visual $(AV^s,as)$-Ahlfors regular, where $V$ is the constant of \eqref{visual-metric}.
\end{lemma}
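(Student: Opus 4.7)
The plan is to show that the very same probability measure $\mu$ witnessing $(A,s)$-Ahlfors regularity with respect to the visual metric $D_{x,a}$ also witnesses visual $(AV^s, as)$-Ahlfors regularity, by sandwiching generalized visual balls between two $D_{x,a}$-metric balls via Lemma \ref{comparison-balls}.

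First I would fix $z \in \partial X$ and $\rho \in (0, 1]$. Lemma \ref{comparison-balls} provides the chain of inclusions
$$B_{D_{x,a}}\!\left(z, \tfrac{1}{V}\rho^{a}\right) \;\subseteq\; B(z,\rho) \;\subseteq\; B_{D_{x,a}}\!\left(z, V\rho^{a}\right).$$
By monotonicity of $\mu$ and the hypothesized Ahlfors regularity applied to the outer metric balls, one obtains
$$\frac{1}{A}\left(\tfrac{1}{V}\rho^{a}\right)^{\!s} \;\leq\; \mu\big(B(z,\rho)\big) \;\leq\; A\left(V\rho^{a}\right)^{\!s},$$
which simplifies to $\frac{1}{AV^s}\rho^{as}\leq \mu(B(z,\rho)) \leq AV^s \rho^{as}$, exactly the visual $(AV^s,as)$-Ahlfors regular condition.

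The only subtle point, which is where I would spend a little extra care, is to justify that the two $D_{x,a}$-radii $\tfrac{1}{V}\rho^{a}$ and $V\rho^{a}$ lie in the range where the hypothesis of Ahlfors regularity can be applied (namely $[0, \mathrm{Diam}_{D_{x,a}}(\partial X)]$). If $V\rho^a$ exceeds $\mathrm{Diam}_{D_{x,a}}(\partial X)$, the outer metric ball is all of $\partial X$ and has $\mu$-mass $1$; but applying the Ahlfors upper bound at $r = \mathrm{Diam}_{D_{x,a}}(\partial X)$ forces $1 \leq A\,\mathrm{Diam}_{D_{x,a}}(\partial X)^s$, so one still has $\mu(B(z,\rho)) = 1 \leq A(V\rho^a)^s$ in this regime. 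A symmetric argument handles the inner ball if $\tfrac{1}{V}\rho^a$ ever reaches the diameter. With this verification the sandwich argument goes through unchanged for every $\rho \in (0,1]$, and the lemma follows.
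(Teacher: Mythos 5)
Your proof is correct and follows exactly the route the paper intends: the paper gives no written proof, stating only that the lemma ``follows immediately'' from Lemma \ref{comparison-balls}, and your sandwich of the generalized visual ball between the two metric balls $B_{D_{x,a}}(z,\frac{1}{V}\rho^a)$ and $B_{D_{x,a}}(z,V\rho^a)$ is precisely that one-line argument. Your extra care about the radii possibly exceeding $\mathrm{Diam}_{D_{x,a}}(\partial X)$ is a legitimate (if minor) refinement that the paper glosses over, and it checks out since $V\geq 1$ and $A\geq 1$.
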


The packing$^*$ number at scale $\rho$ of a subset $C$ of the boundary of a proper Gromov-hyperbolic space $\partial X$ is the maximal number of disjoint generalized visual balls of radius $\rho$ with center in $C$. We denote it by $\text{Pack}^*(C, \rho)$. We write $\text{Cov}(C, \rho)$ to denote the minimal number of generalized visual balls of radius $\rho$ needed to cover $C$.
\begin{lemma}
	\label{packing*}
	For all $T\geq 0$ it holds $\textup{Pack}^*(C, e^{-T +\delta}) \leq \textup{Cov}(C, e^{-T})$ and $\textup{Cov}(C, e^{-T +\delta}) \leq \textup{Pack}^*(C, e^{-T}).$
\end{lemma}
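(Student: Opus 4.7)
The plan is to reduce both inequalities to the almost-ultrametric behaviour of the Gromov product on the boundary, namely \eqref{hyperbolicity-boundary}: for every $z,z',z''\in \partial X$ one has $(z,z')_x\geq \min\{(z,z'')_x,(z',z'')_x\}-\delta$. This extra $-\delta$ in the inequality is exactly what will account for the shift between the scale $e^{-T}$ and $e^{-T+\delta}$ in the two directions. Throughout, recall that by definition $z'\in B(z,\rho)$ if and only if $(z,z')_x>\log(1/\rho)$, and in particular $z\in B(z,\rho)$ for every $\rho>0$ since $(z,z)_x=+\infty$.

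For the first inequality I would fix a disjoint family of generalized visual balls $\{B(z_i,e^{-T+\delta})\}_{i=1}^{N}$ with $z_i\in C$ realizing $N=\textup{Pack}^*(C,e^{-T+\delta})$, and a covering of $C$ by balls $\{B(w_j,e^{-T})\}_{j=1}^{M}$ with $M=\textup{Cov}(C,e^{-T})$. Each $z_i$ lies in some $B(w_{j(i)},e^{-T})$, hence $(z_i,w_{j(i)})_x>T$. The map $i\mapsto j(i)$ is injective: if $j(i)=j(i')$ for some $i\neq i'$, then applying \eqref{hyperbolicity-boundary} with $z''=w_{j(i)}$ gives $(z_i,z_{i'})_x>T-\delta$, so $z_{i'}\in B(z_i,e^{-T+\delta})$, and since $z_{i'}\in B(z_{i'},e^{-T+\delta})$ always, the two packing balls share $z_{i'}$, contradicting disjointness. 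Injectivity yields $N\leq M$.

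For the second inequality I would pick a maximal (not necessarily maximum) disjoint family $\{B(z_i,e^{-T})\}_{i=1}^{N}$ with $z_i\in C$; here $N\leq \textup{Pack}^*(C,e^{-T})$ by definition. I then claim the enlarged balls $\{B(z_i,e^{-T+\delta})\}$ cover $C$. Otherwise there is $z\in C$ with $(z,z_i)_x\leq T-\delta$ for all $i$. For any $i$, if $B(z,e^{-T})\cap B(z_i,e^{-T})$ contained some point $w$, then $(z,w)_x>T$ and $(z_i,w)_x>T$, and applying \eqref{hyperbolicity-boundary} with $z''=w$ would give $(z,z_i)_x>T-\delta$, a contradiction. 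Hence $B(z,e^{-T})$ is disjoint from every ball in the family, contradicting its maximality; therefore $\textup{Cov}(C,e^{-T+\delta})\leq N\leq \textup{Pack}^*(C,e^{-T})$.

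There is no real obstacle here: both estimates are essentially the standard packing/covering duality, and the only subtlety is that the Gromov product on $\partial X$ is not a metric but satisfies the relaxed ultrametric-type inequality \eqref{hyperbolicity-boundary}, which is precisely what forces the scale shift from $e^{-T}$ to $e^{-T+\delta}$ in both bounds.
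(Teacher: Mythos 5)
Your proposal is correct and follows essentially the same route as the paper: both inequalities are reduced to the relaxed ultrametric inequality \eqref{hyperbolicity-boundary}, with the first proved by noting that two packing centers cannot share a covering ball (your injectivity phrasing versus the paper's pigeonhole phrasing is only cosmetic) and the second by enlarging a maximal disjoint family to a cover. The only minor difference is that you argue the second step by contradiction with maximality while the paper argues it directly, which changes nothing of substance.
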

\begin{proof}
	Let $z_1, \ldots, z_N$ be points of $C$ realizing $\textup{Cov}(C, e^{-T})$. Suppose there exist points $w_1,\ldots,w_M$ of $C$ such that $B(w_i, e^{-T +\delta})$ are disjoint, in particular $(w_i,w_j)_{x}\leq T - \delta$ for every $i\neq j$.
	If $M> N$ then two different points $w_i,w_j$ belong to the same ball $B(z_k, e^{-T})$, i.e.
	$(z_k,w_i)_{x}> T$ and $(z_k,w_j)_{x}> T.$
	By \eqref{hyperbolicity-boundary} we have
	$(w_i,w_j)_{x}> T - \delta$ which is a contradiction. This shows the first inequality. \\
	Now let $z_1,\ldots,z_N$ be a maximal collection of points of $C$ such that $B(z_i, e^{-T})$ are disjoint. Then for every $z\in C$ there exists $i$ such that $B(z,e^{-T}) \cap B(z_i,e^{-T}) \neq \emptyset$. Therefore there exists $w\in \partial X$ such that $(z_i,w)_{x}> T$ and $(z,w)_{x}> T$.
	As before we get $(z_i,z)_{x}> T- \delta,$ proving the second inequality.
\end{proof}

\subsection{Groups of isometries, limit set and critical exponent}
Let $X$ be a proper, $\delta$-hyperbolic metric space.
Every isometry of $X$ acts naturally on $\partial X$ and the resulting map on $X\cup \partial X$ is a homeomorphism.
The {\em limit set} $\Lambda(\Gamma)$ of a discrete group of isometries $\Gamma$ is the set of accumulation points of the orbit $\Gamma x$ on $\partial X$, where $x$ is any point of $X$. It is the smallest $\Gamma$-invariant closed set of the Gromov boundary, indeed:
\begin{prop}[\cite{Coo93}, Theorem 5.1]
	\label{prop-minimality-limit-set}
	Let $\Gamma$ be a discrete group of isometries of a proper, Gromov-hyperbolic metric space. Then $\Lambda(\Gamma)$ is the smallest closed $\Gamma$-invariant subset of $\partial X$, i.e. every $\Gamma$-invariant, closed subset $C$ of $\partial X$ contains $\Lambda(\Gamma)$.
\end{prop}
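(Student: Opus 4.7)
The plan is to handle the two assertions separately. First, $\Lambda(\Gamma)$ is $\Gamma$-invariant because if $g_n x \to \eta$ for some sequence $g_n \in \Gamma$ and $x \in X$, then $(h g_n) x \to h \eta$ for any $h \in \Gamma$. It is closed because in a compact Hausdorff space the set of accumulation points of a subset is closed. The substantive part is the minimality claim.

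For the minimality, let $C \subseteq \partial X$ be a non-empty closed $\Gamma$-invariant subset, and fix $x \in X$ together with $\eta \in \Lambda(\Gamma)$. We choose $g_n \in \Gamma$ with $g_n x \to \eta$; since $d(g_n^{-1}x, x) = d(x, g_n x) \to +\infty$ and $\partial X$ is compact for proper $\delta$-hyperbolic $X$, we pass to a subsequence such that $g_n^{-1} x \to \eta' \in \partial X$. In the non-elementary regime assumed throughout the paper, $\Gamma$ has no global fixed point on $\partial X$, so $C$ contains at least two points and we can pick $z_0 \in C$ with $z_0 \neq \eta'$. It is then enough to show $g_n z_0 \to \eta$, for then $\eta \in \overline{\Gamma z_0} \subseteq C$.

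To prove $g_n z_0 \to \eta$ we show $(g_n z_0, \eta)_x \to +\infty$ via the hyperbolicity inequality \eqref{hyperbolicity-boundary}, used in its natural extension to Gromov products that mix points of $X$ and $\partial X$ (which costs only an extra additive constant controlled by $\delta$):
\[
(g_n z_0, \eta)_x \;\geq\; \min\bigl\{(g_n z_0, g_n x)_x,\ (g_n x, \eta)_x\bigr\} - \delta.
\]
The second term diverges by hypothesis. For the first, isometry invariance gives $(g_n z_0, g_n x)_x = (z_0, x)_{g_n^{-1} x}$, and the standard identity $(z_0, x)_y + (z_0, y)_x = d(x, y) + O(\delta)$, coming from the near-antisymmetry $B_{z_0}(x, y) + B_{z_0}(y, x) = O(\delta)$ of the Busemann function, reduces the problem to showing that $(z_0, g_n^{-1} x)_x$ is bounded. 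Since $z_0 \neq \eta'$, $(z_0, \eta')_x$ is finite; applying \eqref{hyperbolicity-boundary} to the triple $(z_0, \eta', g_n^{-1}x)$ together with $(g_n^{-1} x, \eta')_x \to +\infty$ forces $(z_0, g_n^{-1} x)_x \leq (z_0, \eta')_x + \delta$ for $n$ large. Combining these three estimates yields $(g_n z_0, \eta)_x \to +\infty$, as desired.

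The main conceptual obstacle is not any single inequality above, each being standard, but the selection of $z_0 \in C$ distinct from $\eta'$, which quietly uses the non-elementarity of $\Gamma$. Without such an assumption the statement genuinely fails: an infinite cyclic loxodromic group has limit set $\{p, q\}$, yet $\{p\}$ is a closed $\Gamma$-invariant subset that does not contain $\Lambda(\Gamma)$. Under the paper's running hypotheses $\Gamma$ admits no global fixed point on $\partial X$, so this obstruction does not arise and the argument above goes through.
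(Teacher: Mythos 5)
The paper offers no proof of this proposition: it is quoted directly from Coornaert (Theorem 5.1 of \cite{Coo93}), so there is no internal argument to compare against. Your proof is correct and is the standard dynamical one (essentially Coornaert's): pick $g_n x \to \eta$, extract $g_n^{-1}x \to \eta'$, and show that $g_n$ pushes any boundary point $z_0 \neq \eta'$ towards $\eta$ by combining the $\delta$-inequality for (mixed) Gromov products with the isometry invariance $(g_n z_0, g_n x)_x = (z_0,x)_{g_n^{-1}x}$ and the near-identity $(z_0,x)_y + (z_0,y)_x = d(x,y) + O(\delta)$; all of these steps check out, with the understood caveat that the extension of \eqref{hyperbolicity-boundary} to products mixing points of $X$ and $\partial X$ costs only additive constants in $\delta$. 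Your closing observation is also worth recording: as literally stated the proposition is false, both because $C=\emptyset$ is closed and invariant and because of the loxodromic cyclic group, whose attracting fixed point alone is a closed invariant set not containing the two-point limit set. What your argument genuinely needs is that $C$ is non-empty and contains some point distinct from $\eta'$, which is automatic when $\Gamma$ is non-elementary (no global fixed point at infinity forces $\#C \geq 2$); this matches the hypotheses under which the proposition is actually invoked in the paper, where all groups in $\mathcal{M}(\delta,D)$ are non-elementary.
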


\noindent The group $\Gamma$ is called {\em elementary} if $\# \Lambda(\Gamma) \leq 2$. The limit superior in the definition of the critical exponent of $\Gamma$ is a true limit.
\begin{lemma}[\cite{Cav21bis}, Theorem B]
	\label{entropy-critical}
	Let $X$ be a proper, $\delta$-hyperbolic metric space and let $\Gamma$ be a discrete group of isometries of $X$. Then $$h_\Gamma = \lim_{T\to + \infty} \frac{1}{T} \log \# \Gamma x \cap \overline{B}(x,T).$$
\end{lemma}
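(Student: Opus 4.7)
The plan is to prove $\liminf_{T\to+\infty}\frac{1}{T}\log N_T(x)\ge h_\Gamma$, where $N_T(x):=\#\bigl(\Gamma x\cap \overline{B}(x,T)\bigr)$; the reverse inequality is immediate from the definition of $h_\Gamma$ as a limit superior.

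First I would dispose of the elementary case $\#\Lambda(\Gamma)\le 2$. In this case $\Gamma$ is either finite, virtually parabolic, or virtually cyclic generated by a hyperbolic isometry, so $N_T(x)$ is either bounded or grows at most linearly in $T$; hence $h_\Gamma=0$ and the limit trivially exists. From now on assume $\Gamma$ is non-elementary.

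The core step is to establish a quasi-supermultiplicative estimate
\[
N_{T+S}(x)\;\ge\;\tfrac{1}{D}\,N_T(x)\cdot N_S(x),\qquad D=D(\delta,\Gamma,x)>0,
\]
uniform in $T,S\ge 0$. The multiplication map $\Sigma_T(\Gamma,x)\times\Sigma_S(\Gamma,x)\to\Sigma_{T+S}(\Gamma,x)$, $(g,h)\mapsto gh$, has fibre over a product $gh$ parametrised by those $k\in\Gamma$ with $kx$ in the lens $\overline{B}(g^{-1}x,T)\cap\overline{B}(hx,S)$. By the Projection Lemma \ref{projection} and by Lemma \ref{parallel-geodesics}, any such $kx$ lies within $O(\delta)$ of a sub-segment of $[g^{-1}x,hx]$ of length $2(g^{-1}x,hx)_x+O(\delta)$. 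Thus fibres are uniformly bounded precisely when the Gromov product $(g^{-1}x,hx)_x$ is small, i.e.\ when $gx$ and $hx$ are ``antipodal'' as seen from $x$. To guarantee a positive proportion of such pairs I would use the minimality of $\Lambda(\Gamma)$ (Proposition \ref{prop-minimality-limit-set}) together with the packing estimate Lemma \ref{packing*} to cover the set of shadow-directions of $\Gamma x$ at scale $O(\delta)$ by finitely many boundary sectors, and to pick a finite family $\gamma_1,\dots,\gamma_N\in\Gamma$ that ``rotates'' any orbit direction into a fixed antipodal sector pair; absorbing the bounded extra translation length into the constant $D$ gives the desired inequality.

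Once the quasi-supermultiplicative inequality is in place, a standard Fekete-type argument applied to the almost-superadditive sequence $a_n:=\log\bigl(D\cdot N_{nT_0}(x)\bigr)$ (for a fixed $T_0>0$) shows that $\tfrac{a_n}{nT_0}\to \sup_n\tfrac{a_n}{nT_0}$. Interpolation between consecutive multiples of $T_0$ is controlled by the trivial monotonicity $N_T(x)\le N_{T'}(x)$ for $T\le T'$ together with the estimate $N_{T+T_0}(x)\le N_{T_0}(x)\cdot N_T(x)\cdot D$ coming from the reverse direction of the multiplication map. The resulting limit must equal $h_\Gamma$ since it is the $\liminf$ and $h_\Gamma$ is the $\limsup$ of the same sequence.

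The main obstacle is the alignment step: producing uniformly finitely many elements $\gamma_i$ that direct a positive fraction of orbit points into antipodal sectors, without losing more than a bounded factor from $N_T(x)$ or $N_S(x)$. This requires combining minimality of $\Lambda(\Gamma)$ with a careful covering of a neighbourhood of $\Lambda(\Gamma)$ by shadows at a scale comparable to $\delta$, exploiting the shadow--generalised-ball comparison (Lemma \ref{shadow-ball}) and the packing bound (Lemma \ref{packing*}). The hyperbolic-geometry estimates on fibre sizes are routine given the preliminaries of Section \ref{sec-Gromov}; the genuine work lies in this quantitative alignment.
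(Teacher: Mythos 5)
The paper does not prove this lemma: it is imported verbatim from \cite{Cav21bis}, Theorem B, so there is no in-paper argument to compare yours against. Judged on its own terms, your proposal has a genuine gap already in the first reduction. You claim that if $\#\Lambda(\Gamma)\le 2$ then $N_T(x)$ is bounded or grows at most linearly, so that $h_\Gamma=0$. This is false in the parabolic case: take $X=\mathbb{H}^3$ and $\Gamma\cong\mathbb{Z}^2$ acting by parabolic translations fixing a single boundary point. Then $\Gamma$ is discrete and elementary ($\#\Lambda(\Gamma)=1$), but $d(x,gx)=2\log\lVert g\rVert+O(1)$, hence $\#\bigl(\Gamma x\cap\overline{B}(x,T)\bigr)\asymp e^{T}$ and $h_\Gamma=1>0$. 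Worse, the strategy you use for the non-elementary case is structurally unavailable here: a discrete parabolic group contains no loxodromic elements, all orbit directions converge to the unique limit point, so the Gromov products $(g^{-1}x,hx)_x$ are unbounded over the relevant pairs, the fibres of the multiplication map are unbounded, and no finite family $\gamma_1,\dots,\gamma_N$ can ``rotate'' orbit points into antipodal sectors. The existence of the limit for elementary (in particular parabolic) groups therefore needs a genuinely different argument, and it cannot be dismissed as a degenerate case.

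For the non-elementary case your outline is the standard one (supermultiplicativity up to a bounded defect by inserting an aligning element from a fixed finite set, then a Fekete argument), and it can be made to work; but the alignment lemma --- for all $g,h\in\Gamma$ there is $\gamma$ in a fixed finite $F\subseteq\Gamma$ with $d(x,g\gamma hx)\ge d(x,gx)+d(x,hx)-C$ --- is exactly the hard content, and ``minimality of $\Lambda(\Gamma)$ plus a packing bound'' does not produce it; one needs at least two independent loxodromic elements and a ping-pong/projection argument, which you only gesture at. Two smaller points: the fibre of the multiplication map is controlled by the Gromov product only after also accounting for the defects $T-d(x,gx)$ and $S-d(x,hx)$; and the inequality $N_{T+T_0}(x)\le D\,N_{T_0}(x)\,N_T(x)$ you invoke for interpolation is a submultiplicativity statement that does not follow from the multiplication map (its image need not exhaust $\Sigma_{T+T_0}$) --- fortunately monotonicity of $T\mapsto N_T(x)$ already suffices for that step.
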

\noindent The {\em critical exponent} of $\Gamma$ can be seen also as  
$$h_\Gamma = \inf\bigg\lbrace s \geq 0 \text{ s.t. } \sum_{g \in \Gamma}e^{-sd(x,g x)} < +\infty\bigg\rbrace.$$
We remark that for every $s \geq 0$ the series $\sum_{g \in \Gamma}e^{-sd(x,g x)}$, which is called the Poincaré series of $\Gamma$, is $\Gamma$-invariant. In other words $\sum_{g \in \Gamma}e^{-sd(x,g x)} = \sum_{g \in \Gamma}e^{-sd(x',g x')}$ for all $x' \in \Gamma x$. 
\noindent There is a canonical way to construct a measure on $\partial X$ starting from the Poincaré series. For every $s>h_\Gamma$ the measure
$$\mu_s = \frac{1}{\sum_{g \in \Gamma}e^{-sd(x,g x)}}\cdot \sum_{g \in \Gamma}e^{-sd(x,g x)}\Delta_{g x},$$
where $\Delta_{g x}$ is the Dirac measure at $g x$, is a probability measure on the compact space $X\cup \partial X$. Then there exists a sequence $s_i$ converging to $h_\Gamma$ such that $\mu_{s_i}$ converges $*$-weakly to a probability measure on $X\cup \partial X$. Any of these limits is called a Patterson-Sullivan measure and it is denoted by $\mu_{\text{PS}}$. 
\begin{prop}[Theorem 5.4 of \cite{Coo93}.]
	\label{patterson-sullivan}
	Let $X$ be a proper, $\delta$-hyperbolic metric space and let $\Gamma$ be a discrete group of isometries of $X$ with $h_\Gamma<+\infty$. Then every Patterson-Sullivan measure is supported on $\Lambda(\Gamma)$. Moreover it is a $\Gamma$-quasiconformal density of dimension $h_\Gamma$, i.e. it satisfies
	$$\frac{1}{Q}\cdot e^{h_\Gamma (B_z(x,x) - B_z(x,gx))} \leq \frac{d(g_* \mu_{\textup{PS}})}{d\mu_{\textup{PS}}}(z) \leq Q\cdot e^{h_\Gamma (B_z(x,x) - B_z(x,gx))}$$
	for every $g \in \Gamma$ and every $z\in \Lambda(\Gamma)$, where $Q$ is a constant depending only on $\delta$ and an upper bound on $h_\Gamma$.
\end{prop}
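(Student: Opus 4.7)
The plan is to follow the classical Patterson--Sullivan construction, with the Gromov-hyperbolic geometry of $X$ entering only through a uniform approximation of $d(w,gx)-d(w,x)$ by the Busemann value $B_z(x,gx)$ when $w\in X$ is close to $z\in\partial X$.

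First I would handle the support statement. Assuming (after Patterson's standard modification of the summands by a slowly varying function, which does not affect the critical exponent nor the estimates below) that the Poincar\'e series $W_s := \sum_{g\in\Gamma} e^{-sd(x,gx)}$ diverges as $s\downarrow h_\Gamma$, for every compact $K\subseteq X$ the set $\Gamma x\cap K$ is finite by properness and discreteness, hence $\mu_s(K) \leq W_s^{-1}\cdot\#(\Gamma x\cap K)\to 0$. So every weak-$*$ limit $\mu_{\textup{PS}}$ is concentrated on $\partial X$, and since each $\mu_s$ is supported on $\Gamma x$ whose set of accumulation points is $\Lambda(\Gamma)$, the limit lives on $\Lambda(\Gamma)$.

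For the quasiconformality I would start from the identity, obtained by the change of variables $h\mapsto g^{-1}h$,
$$g_*\mu_s = \frac{1}{W_s}\sum_{h\in\Gamma} e^{-sd(gx,hx)}\Delta_{hx},\qquad \frac{dg_*\mu_s}{d\mu_s}(hx) = e^{-s(d(gx,hx) - d(x,hx))}.$$
The decisive $\delta$-hyperbolic input is the existence of a constant $C=C(\delta)$ such that for every $y\in X$, every $z\in\partial X$ and every $w\in X$ with $(z,w)_x \geq d(x,y) + C$,
$$\bigl| d(w,y) - d(w,x) - B_z(x,y) \bigr| \leq C.$$
This is obtained by fellow-travelling the segment $[x,w]$ with the ray $\xi_{x,z}$ past the relevant scale (Lemma~\ref{product-rays}) and then using the definition of $B_z$ together with the Projection Lemma to bound the error. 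Applied with $y=gx$, for $s\in(h_\Gamma, H]$ and every $hx$ sufficiently deep in a shadow of $z$,
$$Q^{-1}e^{-sB_z(x,gx)} \leq \frac{dg_*\mu_s}{d\mu_s}(hx) \leq Q\,e^{-sB_z(x,gx)},$$
with $Q := e^{HC}$ depending only on $\delta$ and on the upper bound $H$ of $h_\Gamma$.

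To finish, I would pass to the weak-$*$ limit along $s_i\downarrow h_\Gamma$. By Lemma~\ref{shadow-ball} shadows and generalized visual balls are comparable, and the map $z'\mapsto B_{z'}(x,gx)$ oscillates by at most $O(\delta)$ on sufficiently small visual balls around any fixed $z\in\Lambda(\Gamma)$ (this again follows from Lemma~\ref{parallel-geodesics} and Lemma~\ref{product-rays}). Hence the two-sided bound of the previous paragraph passes to the limit on every small enough visual ball, and a Lebesgue-type differentiation of $dg_*\mu_{\textup{PS}}/d\mu_{\textup{PS}}$ with respect to generalized visual balls yields the desired pinching between $Q^{\pm 1} e^{h_\Gamma(B_z(x,x) - B_z(x,gx))}$ for $\mu_{\textup{PS}}$-a.e.\ $z\in\Lambda(\Gamma)$, since $B_z(x,x)=0$. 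The main obstacle is the hyperbolic estimate above: it is the only step where $\delta$ and $H$ enter and it dictates the explicit form of $Q$; everything else is a formal consequence of the Patterson construction and weak-$*$ convergence.
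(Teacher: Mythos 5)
The paper does not actually prove this proposition: it is quoted verbatim from Coornaert (Theorem 5.4 of \cite{Coo93}), with only the added remark that the quantification of $Q$ in terms of $\delta$ and an upper bound on $h_\Gamma$ can be extracted from the proof there. Your argument is essentially that proof — Patterson's modification to force divergence at $h_\Gamma$ (needed because the paper's literal definition of $\mu_s$ omits it), the change-of-variables identity for $g_*\mu_s$, the $O(\delta)$-comparison of $d(w,gx)-d(w,x)$ with $B_z(x,gx)$ for $w$ deep inside a shadow of $z$, and a passage to the weak-$*$ limit on small balls — and it correctly locates the two sources of the constant $Q$; the only step I would spell out further is the final differentiation, where one should invoke the Vitali-type covering property of generalized visual balls (a consequence of \eqref{hyperbolicity-boundary}, since two intersecting balls of comparable radius are contained in a slightly enlarged one) to legitimately differentiate $g_*\mu_{\textup{PS}}$ against $\mu_{\textup{PS}}$ along that basis.
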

\noindent The quantification of $Q$ is not explicitated in the original paper, but it follows from the proof therein.\vspace{2mm}

\noindent The set $\Lambda(\Gamma)$ is $\Gamma$-invariant so it is its quasiconvex hull. We recall that a discrete group of isometries $\Gamma$ is {\em quasiconvex-cocompact} if and only if its action on QC-Hull$(\Lambda(\Gamma))$ is cocompact, i.e. if there exists $D\geq 0$ such that for all $x,y\in \text{QC-Hull}(\Lambda(\Gamma))$ it holds $d(gx,y)\leq D$ for some $g\in \Gamma$. The smallest $D$ satisfying this property is called the {\em codiameter} of $\Gamma$.\vspace{2mm}

\noindent\emph{Given two real numbers $\delta \geq 0$ and $D>0$ we recall that $\mathcal{M}(\delta,D)$ is the class of triples $(X,x,\Gamma)$, where $X$ is a proper, geodesic, $\delta$-hyperbolic metric space, $\Gamma$ is a discrete, non-elementary, torsion-free, quasiconvex-cocompact group of isometries with codiameter $\leq D$ and $x\in \textup{QC-Hull}(\Lambda(\Gamma))$. For an element $(X,x,\Gamma)$ of $\mathcal{M}(\delta,D)$ we will use $Y$ to denote $\textup{QC-Hull}(\Lambda(\Gamma))$.}

\section{$\mathcal{M}(\delta,D)$ is closed under equivariant GH-limits}
The purpose of this section is to prove statement (i) of Theorem \ref{theorem-main}. We need to understand better the properties of the spaces belonging to $\mathcal{M}(\delta,D)$.

\subsection{Entropy and systolic estimates on $\mathcal{M}(\delta,D)$}

The following are straightforward adaptations of results of \cite{BCGS} and \cite{BCGS2}. 

\begin{lemma}
	\label{lemma-generating-set}
	If $(X,x,\Gamma) \in \mathcal{M}(\delta,D)$ then
	$\Sigma_{2D+72\delta}(x)$ generates $\Gamma$.
\end{lemma}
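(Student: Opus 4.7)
The plan is to imitate the classical Milnor--Svarc generation argument inside the quasiconvex hull $Y = \textup{QC-Hull}(\Lambda(\Gamma))$. Two ingredients combine: by Lemma~\ref{lemma-quasigeodesic} the set $Y$ is $36\delta$-quasiconvex, so any geodesic between two points of $Y$ stays inside the $36\delta$-neighbourhood of $Y$; and since $\Gamma$ acts on $Y$ with codiameter at most $D$ and $x\in Y$, the orbit $\Gamma x$ is $D$-dense in $Y$. Together these imply that for every $g\in\Gamma$ and every point $p$ of any geodesic $[x,gx]$ there exists $h\in\Gamma$ with $d(hx,p)\leq D+36\delta$.

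First I would fix $g\in\Gamma$, set $L=d(x,gx)$ and let $\gamma\colon[0,L]\to X$ be a geodesic from $x$ to $gx$. For a small $\varepsilon>0$ I then choose a subdivision $0=t_0<t_1<\cdots<t_N=L$ with $t_{i+1}-t_i\leq\varepsilon$ and pick $h_i\in\Gamma$ with $d(h_ix,\gamma(t_i))\leq D+36\delta$, setting $h_0=\mathrm{id}$ and $h_N=g$ (legitimate since $\gamma(0)=x$ and $\gamma(L)=gx$ are themselves orbit points). The triangle inequality then yields
\begin{equation*}
d(x,\,h_i^{-1}h_{i+1}x)=d(h_ix,h_{i+1}x)\leq 2(D+36\delta)+(t_{i+1}-t_i)\leq 2D+72\delta+\varepsilon,
\end{equation*}
and the telescoping factorisation $g=(h_0^{-1}h_1)(h_1^{-1}h_2)\cdots(h_{N-1}^{-1}h_N)$ displays $g$ as a product of elements of $\Sigma_{2D+72\delta+\varepsilon}(x)$.

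The main subtlety is to remove the slack $\varepsilon$ and recover exactly the constant $2D+72\delta$ claimed in the statement, since the naive discretisation is off by an arbitrarily small amount. I would close this gap using the discreteness of $\Gamma$. The set $\Sigma_{2D+72\delta+1}(x)$ is finite, so the displacements $\{d(x,hx):h\in\Sigma_{2D+72\delta+1}(x)\}$ form a finite subset of $\mathbb{R}$, and hence there exists $\varepsilon_0>0$ such that no element of $\Gamma$ has displacement inside $(2D+72\delta,\,2D+72\delta+\varepsilon_0]$. Performing the construction above with any $\varepsilon<\varepsilon_0$ then forces each factor $h_i^{-1}h_{i+1}$ into $\Sigma_{2D+72\delta}(x)$, which therefore generates $\Gamma$. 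This final discreteness trick is the crux of the argument: the geodesic-subdivision step by itself only produces generators of displacement $\leq 2D+72\delta+\varepsilon$, and it is the absence of displacements immediately above $2D+72\delta$ that upgrades the bound to the exact one.
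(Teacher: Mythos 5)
Your proof is correct and follows essentially the same route as the paper's: subdivide a geodesic $[x,gx]$ into $\varepsilon$-pieces, use $36\delta$-quasiconvexity of $Y$ plus $D$-cocompactness to replace each subdivision point by an orbit point, telescope, and then invoke discreteness to absorb the $\varepsilon$-slack into the constant $2D+72\delta$. The paper phrases the last step as choosing $\varepsilon$ so that $d(x,gx)<2D+72\delta+\varepsilon$ implies $d(x,gx)\leq 2D+72\delta$, which is exactly your finiteness-of-displacements argument.
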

\begin{proof}
	The proof is classical for geodesic metric spaces. In this setting we need to use the fact that $\text{QC-Hull}(\Lambda(\Gamma))$ is $36\delta$-quasiconvex. By discreteness we can fix a small $\varepsilon >0$ such that $d(x,gx) < 2D+72\delta + \varepsilon$ implies $d(x,gx)\leq 2D+72\delta$ for all $g\in\Gamma$. We take any $g\in \Gamma$ and we take consecutive points $x_i$, $i=0,\ldots,N$, on a geodesic segment $[x,gx]$ such that $x_0=x$, $x_N=gx$ and $d(x_i,x_{i+1}) <\varepsilon$. By Lemma \ref{lemma-quasigeodesic} each $x_i$ is at distance at most $36\delta$ from a point $y_i \in \text{QC-Hull}(\Lambda(\Gamma))$, hence there exists some $h_i\in\Gamma$ such that $d(h_ix,x_i)\leq 36\delta + D$. We can choose $h_N = g$ and $h_0=\text{id}$. We define the elements $g_i =  h_{i-1}^{-1} h_i$ for $i = 1,\ldots,N$. Clearly $g_1\cdots g_{N-1}g_N = g$. Moreover $d(g_i x, x) < 72\delta + 2D + \varepsilon$ for every $i$, so $g_i \in \Sigma_{2D+72\delta}(x)$.
\end{proof}

\begin{prop}[\cite{BCGS}, Proposition 5.10]
	\label{prop-lower-bound-entropy}
	If $(X,x,\Gamma) \in \mathcal{M}(\delta,D)$ then $h_\Gamma \geq \frac{\log 2}{99\delta + 10D}$.
\end{prop}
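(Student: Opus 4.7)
The strategy is to produce two isometries $h_1, h_2 \in \Gamma$ displacing $x$ by at most $L := 99\delta + 10D$ that together generate a free sub-semigroup of rank two. Once this is achieved, each of the $2^n$ reduced words of length $n$ in $\{h_1, h_2\}$ sends $x$ to a distinct orbit point lying in $\overline{B}(x, nL)$, so $\#(\Gamma x \cap \overline{B}(x, nL)) \geq 2^n$ and Lemma \ref{entropy-critical} immediately yields $h_\Gamma \geq (\log 2)/L$, which is the claim.

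To locate the generators, I would first apply Lemma \ref{lemma-generating-set} to fix the finite generating set $S = \Sigma_{2D+72\delta}(x)$. Since $\Gamma$ is non-elementary, torsion-free, discrete and quasiconvex-cocompact, every nontrivial element is hyperbolic (the absence of parabolics is standard for quasiconvex-cocompact groups on hyperbolic spaces), so each such element has a well-defined pair of fixed points $\{g^+, g^-\} \subseteq \partial X$; and by Proposition \ref{prop-minimality-limit-set} the limit set $\Lambda(\Gamma)$ is infinite. Consequently the fixed-point pairs of the elements of $S$ cannot all share a common point of $\partial X$, for otherwise $\Lambda(\Gamma)$ would reduce to at most two points. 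I can therefore select $g_1 \in S$ and $g_2 \in S$, possibly replacing one of them by a bounded-length product of generators, so that the four points $g_1^+, g_1^-, g_2^+, g_2^-$ are pairwise distinct.

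Next I would run a ping-pong argument on $\partial X$: when the four fixed points of two hyperbolic isometries are pairwise distinct, suitable powers $h_i := g_i^{k_i}$ act on $\partial X$ so that pairwise disjoint visual neighbourhoods $U_i^{\pm}$ of the respective fixed points play the role of attracting and repelling domains in the classical ping-pong configuration, forcing $\langle h_1, h_2\rangle$ to be free of rank two. The required exponents $k_i$ are controlled by the Gromov products $(g_1^{\epsilon}, g_2^{\epsilon'})_x$ and by the translation lengths of the $g_i$'s; quasiconvex-cocompactness (Lemma \ref{lemma-quasigeodesic}) keeps every axis of a conjugate of $g_i$ in a bounded neighbourhood of $Y = \textup{QC-Hull}(\Lambda(\Gamma))$, and the codiameter bound places $x$ within distance $D$ of any such axis, so these Gromov products and translation lengths can be bounded in terms of $\delta$ and $D$ by means of Lemmas \ref{projection}, \ref{parallel-geodesics} and \ref{product-rays}.

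The main obstacle is the quantitative bookkeeping required to turn this qualitative picture into the precise constant $99\delta + 10D$: one must track the displacement of $x$ through each conjugation, product and power, checking that the compounded effect of (i) choosing two generators with disjoint fixed-point pairs at cost $\sim 2D + 72\delta$ each, (ii) taking a bounded product to secure disjointness of the fixed-point pairs, and (iii) allowing an additive slack of order $\delta$ to make the ping-pong neighbourhoods on $\partial X$ close up, fits inside the budget $L = 99\delta + 10D$. The numerator $\log 2$ is exactly the exponential growth rate of the rank-two free semigroup delivered by the ping-pong, and the denominator encodes the displacement cost of its generators.
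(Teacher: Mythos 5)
Your overall strategy is the right one and is, in outline, exactly what the paper does: Proposition \ref{prop-lower-bound-entropy} is proved by importing the argument of \cite{BCGS}, which produces two elements of $\Gamma$ of displacement at most $L=99\delta+10D$ generating a free sub-semigroup of rank two, and then counts the $2^n$ distinct orbit points (the action is free since $\Gamma$ is discrete and torsion-free) inside $\overline{B}(\cdot,nL)$ and applies Lemma \ref{entropy-critical}. The route to the two ping-pong partners differs, though: the paper first extracts a single hyperbolic isometry $a$ with $\ell(a)\leq 8D+10\delta$ (the analogue of Lemma 5.14 of \cite{BCGS}) and then moves the basepoint to a point $y\in\mathrm{Min}(a)\subseteq\mathrm{QC}\text{-}\mathrm{Hull}(\Lambda(\Gamma))$ --- legitimate because the critical exponent is basepoint-independent --- before running the quantified free-semigroup argument with $a$ and a suitable conjugate; you instead try to find both partners directly inside $\Sigma_{2D+72\delta}(x)$ and take powers. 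Working on the axis of $a$ rather than at $x$ is not cosmetic: it eliminates the additive cost $2\,d(x,\mathrm{Ax}(g))$ that your powers $g_i^{k_i}$ incur, and it is part of how the specific constant is achieved.

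The genuine gap is at the quantitative core, which for this proposition \emph{is} the statement. Arranging the four fixed points $g_1^{\pm},g_2^{\pm}$ to be \emph{pairwise distinct} gives no control whatsoever on the Gromov products $(g_1^{\epsilon},g_2^{\epsilon'})_x$: two generators of $\Sigma_{2D+72\delta}(x)$ can have fixed points arbitrarily close in $\partial X$, in which case the powers $k_i$ needed to open up disjoint ping-pong domains, and hence the displacements $d(x,h_ix)$, are unbounded in terms of $\delta$ and $D$. Your remedy --- ``possibly replacing one of them by a bounded-length product of generators'' --- is precisely the delicate step: one must exhibit a specific second element whose axis has \emph{quantitatively} bounded overlap with that of the first (in \cite{BCGS} this is done by conjugating the short hyperbolic element $a$ by a controlled element and bounding the diameter of the near-intersection of the two axes), and this is where the budget $99\delta+10D$ is actually spent. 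You flag this bookkeeping as ``the main obstacle'' and do not carry it out, so the argument as written establishes only that $h_\Gamma\geq \log 2/L$ for \emph{some} $L=L(\delta,D)$, not for $L=99\delta+10D$. To close the gap you would either need to reproduce the explicit axis-separation estimate of \cite{BCGS} or redo the displacement accounting for your choice of generators, including a proof that the Gromov products above admit an explicit bound of the form $c_1\delta+c_2 D$.
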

\begin{proof}
	Using the same proof of Lemma 5.14 of \cite{BCGS} we conclude that there exists a hyperbolic isometry $a\in\Gamma$ such that $\ell(a)\leq 8D + 10\delta$. The remaining part of the proof can be done exactly in the same way of \cite{BCGS}, choosing $y\in \text{Min}(a) \subseteq \text{QC-Hull}(\Lambda(\Gamma))$ and using Lemma \ref{lemma-generating-set}.
\end{proof}
\begin{prop}[\cite{BCGS2}, Theorem 3.4]
	\label{prop-bound-systole}
	For every $H \geq 0$ there exists $s=s(\delta,D,H) > 0$ such that if $(X,x,\Gamma) \in \mathcal{M}(\delta,D)$ and if $h_\Gamma \leq H$ then \textup{sys}$(\Gamma,X) \geq s.$
\end{prop}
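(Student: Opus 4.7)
The plan is to prove the contrapositive by a Margulis-plus-Schottky argument, adapting \cite{BCGS2}. I will argue by contradiction: assume a sequence $(X_n, x_n, \Gamma_n) \in \mathcal{M}(\delta, D)$ with $h_{\Gamma_n} \leq H$ and $\sigma_n := \textup{sys}(\Gamma_n, X_n) \to 0$; the strategy is to produce from $\Gamma_n$ a free rank-two subgroup whose generators have arbitrarily small displacement at some point of $Y_n$, forcing the entropy to diverge. Since $\Gamma_n$ is torsion-free, discrete, non-elementary and quasiconvex-cocompact, every non-trivial element is hyperbolic, so $\sigma_n = \inf_{g \neq \text{id}} \ell(g)$. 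I would pick $g_n \in \Gamma_n\setminus\{\text{id}\}$ with $\ell(g_n) \leq \sigma_n + 1/n$; by Lemma \ref{parallel-geodesics} and the fact that both fixed points of $g_n$ at infinity lie in $\Lambda(\Gamma_n)$, its minimal-displacement set is within bounded distance of $Y_n = \textup{QC-Hull}(\Lambda(\Gamma_n))$, and combining this with the codiameter bound $D$ I would replace $g_n$ by a conjugate so that $d(y_n, g_n y_n) \leq \sigma_n + C$ for some $y_n \in Y_n$ and some $C = C(\delta, D)$.

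Next I would invoke the Margulis lemma for $\delta$-hyperbolic spaces (cf.\ \cite{BCGS}, Section 5): there is $\varepsilon_0 = \varepsilon_0(\delta)$ such that any two non-trivial hyperbolic isometries with displacements $< \varepsilon_0$ at a common point share an axis, hence commute. For $n$ large, $\Sigma_{\varepsilon_0}(\Gamma_n, y_n)$ would then lie in a single maximal cyclic subgroup $\langle \bar g_n \rangle$ with $\ell(\bar g_n) \leq \sigma_n + C$. Since $\Gamma_n$ is non-elementary but is generated by $\Sigma_{2D + 72\delta}(\Gamma_n, y_n)$ (Lemma \ref{lemma-generating-set}), some $h_n \in \Sigma_{2D + 72\delta}(\Gamma_n, y_n)$ must fail to commute with $\bar g_n$; then $\bar g_n' := h_n \bar g_n h_n^{-1}$ is hyperbolic with the same translation length, has axis at bounded distance from $y_n$, and has a fixed-point pair on $\partial X_n$ distinct from that of $\bar g_n$.

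Finally I would perform quantitative ping-pong between $a_n := \bar g_n^{k_n}$ and $b_n := (\bar g_n')^{k_n}$, choosing $k_n \to \infty$ with $k_n \sigma_n \to 0$ (for instance $k_n \sim \sigma_n^{-1/2}$). The shadow estimates of Lemmas \ref{product-rays} and \ref{shadow-ball} should imply that for $n$ large the four shadows at $y_n$ of $a_n^{\pm 1} y_n$ and $b_n^{\pm 1} y_n$ are pairwise disjoint, so $\langle a_n, b_n\rangle$ is free of rank $2$, and every reduced word of length $\ell$ in these generators has displacement at $y_n$ at most $\ell \cdot L_n$ with $L_n = O(k_n \sigma_n + D + \delta) \to 0$. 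This yields
\begin{equation*}
	H \;\geq\; h_{\Gamma_n} \;\geq\; \frac{\log 3}{L_n} \;\longrightarrow\; +\infty,
\end{equation*}
the desired contradiction. The main obstacle is the calibration of $k_n$: it must be large enough for iterating $\bar g_n$ to drag $y_n$ close to its attracting fixed point in the visual metric, forcing the four required shadows to be disjoint, yet small enough that $k_n \sigma_n$ still tends to $0$. All the constants involved depend only on $\delta, D, H$, which is what yields the uniform lower bound $s = s(\delta, D, H)$.
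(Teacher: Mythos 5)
There is a genuine gap here, in fact two. First, the Margulis-type statement you invoke is false as stated: no constant $\varepsilon_0$ depending only on $\delta$ can force two hyperbolic isometries with displacement $<\varepsilon_0$ at a common point to share an axis. Rescaling the Cayley tree of a free group of rank $2$ by a factor $\epsilon$ gives a $0$-hyperbolic space carrying two non-commuting hyperbolic isometries of displacement $\epsilon$ at a common vertex, for every $\epsilon>0$. The curvature-free Margulis lemma of \cite{BCGS} has a constant depending on $\delta$ \emph{and} on an entropy (or packing) bound, and its proof already rests on the Bishop--Gromov orbit-counting estimate --- which is exactly the ingredient the paper singles out as the only non-trivial point of this proposition.

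Second, and fatally, the concluding inequality $H\ge \log 3/L_n\to+\infty$ cannot be reached. Your second generator $b_n=h_n\bar g_n^{k_n}h_n^{-1}$ has displacement at $y_n$ at least roughly $2\,d(y_n,\mathrm{Axis}(b_n))$, and since $h_n$ cannot itself have small displacement (otherwise it would lie in the elementary subgroup $\Gamma_{\varepsilon_0}(y_n)$ and commute with $\bar g_n$), this is bounded below by a constant of order $D+\delta$. You even write $L_n=O(k_n\sigma_n+D+\delta)\to 0$, which is self-contradictory for fixed $D,\delta>0$; independently, the disjointness of the four shadows needed for ping-pong forces $k_n\ell(\bar g_n)$ to be bounded \emph{below} by a constant comparable to $\delta$ plus the gap between the two axes, which is incompatible with $k_n\sigma_n\to 0$. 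So your free subgroup only yields $h_{\Gamma_n}\gtrsim \log 3/(2D+c\delta)$, essentially the content of Proposition \ref{prop-lower-bound-entropy}, and no contradiction with $h_{\Gamma_n}\le H$. The paper's route (Theorem 3.4 of \cite{BCGS2}) runs the count in the opposite direction: if $\mathrm{sys}(\Gamma,X)=\epsilon$ is attained near $y\in Y$ by $g$, the powers $g^k$ with $0\le k\le \lfloor 1/\epsilon\rfloor$ give at least $1/\epsilon$ distinct orbit points in $\overline B(y,1)$ (distinct by discreteness and torsion-freeness), while the Bishop--Gromov estimate of \cite{BCGS}, Theorem 5.1 --- the one step that must be extended from the cocompact to the quasiconvex-cocompact setting --- bounds $\#\,\Gamma y\cap\overline B(y,1)$ above purely in terms of $\delta$, $D$ and $H$; comparing the two bounds gives $\epsilon\ge s(\delta,D,H)$. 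A ping-pong variant can be salvaged only by letting the exponents range over $\{1,\dots,N\}$ with $N\sim 1/\epsilon$ and counting all $N^m$ alternating words of $m$ syllables and displacement $\le m\cdot C(\delta,D)$, which is again an orbit-counting contradiction, not a small-displacement free group.
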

\begin{proof}
	The proof is the same of Theorem 3.4 of \cite{BCGS2}. The only non-trivial part is the Bishop-Gromov estimate stated in Theorem 3.1 of \cite{BCGS2} and proved in \cite{BCGS}, Theorem 5.1. It is made in the cocompact case but it extends word by word to the quasiconvex-cocompact setting.
\end{proof}

\subsection{Covering entropy}
Let $Y$ be any subset of a metric space $X$:\\
-- a subset $S$ of $Y$ is called  {\em $r$-dense}   if   for all $y \in Y$  there exists $z\in S$ such that $d(y,z)\leq r$; \\
-- a subset $S$ of $Y$ is called  {\em $r$-separated} if  $d(y,z)> r$  for all $y,z \in S$.\\
The packing number of $Y$ at scale $r$ is the maximal cardinality of a $2r$-separated subset of $Y$ and it is denoted by $\text{Pack}(Y,r)$. The covering number of $Y$ is the minimal cardinality of a $r$-dense subset of $Y$ and it is denoted by $\text{Cov}(Y,r)$. These two quantities are classically related by:
\begin{equation}
	\label{packing-covering}
	\text{Pack}(Y,2r) \leq \text{Cov}(Y,2r) \leq \text{Pack}(Y,r).
\end{equation}
$Y$ is said uniformly packed at scales $0<r\leq R$ if 
$$\text{Pack}_Y(R,r) := \sup_{x\in Y}\text{Pack}(\overline{B}(x,R) \cap Y,r) <+\infty$$ 
and uniformly covered at scales $0<r\leq R$ if
$$\text{Cov}_Y(R,r):=\sup_{x\in Y}\text{Cov}(\overline{B}(x,R) \cap Y,r) <+\infty.$$

\begin{lemma}
	\label{lemma-uniformly-covered}
	Let $(X,x,\Gamma)\in \mathcal{M}(\delta,D)$. Then $Y=\textup{QC-Hull}(\Lambda(\Gamma))$ is uniformly packed and uniformly covered at any scales.
\end{lemma}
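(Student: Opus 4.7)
The plan is to use cocompactness to reduce both bounds to a single compactness statement in the ambient proper space $X$.

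First I would observe that by Lemma \ref{lemma-quasigeodesic} the set $Y=\textup{QC-Hull}(\Lambda(\Gamma))$ is closed in $X$. Since $X$ is proper, for every radius $R'\geq 0$ the intersection $\overline{B}(x,R')\cap Y$ is a closed subset of a compact ball, hence compact. In particular, for every $r>0$ we have $\textup{Pack}(\overline{B}(x,R')\cap Y, r)<+\infty$ and $\textup{Cov}(\overline{B}(x,R')\cap Y, r)<+\infty$.

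Next I would use the codiameter hypothesis to show that a ball centered at an arbitrary point $y\in Y$ can be mapped isometrically into a ball around $x$. Since $\Gamma$ acts cocompactly on $Y$ with codiameter $\leq D$, there exists $g\in \Gamma$ with $d(gx,y)\leq D$. Because $\Lambda(\Gamma)$ is $\Gamma$-invariant, so is $Y$, hence $g^{-1}$ restricts to an isometry of $Y$. The triangle inequality gives
\[
g^{-1}\bigl(\overline{B}(y,R)\cap Y\bigr) \subseteq \overline{B}(x,R+D)\cap Y,
\]
and consequently
\[
\textup{Pack}(\overline{B}(y,R)\cap Y, r) \;\leq\; \textup{Pack}(\overline{B}(x,R+D)\cap Y, r),
\]
with the analogous inequality for the covering number. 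Taking the supremum over $y\in Y$ and applying the finiteness from the first step shows $\textup{Pack}_Y(R,r)<+\infty$ and $\textup{Cov}_Y(R,r)<+\infty$ for all $0<r\leq R$. (Alternatively, one of the two finiteness statements follows from the other via \eqref{packing-covering}.)

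There is really no obstacle: the whole argument is the standard reduction ``cocompact action $+$ proper ambient space $\Rightarrow$ bounded geometry at all scales''. The only thing to be careful about is to invoke that $Y$ is $\Gamma$-invariant (so that $g^{-1}$ maps $\overline{B}(y,R)\cap Y$ into $Y$), which is automatic since $\Lambda(\Gamma)$ is $\Gamma$-invariant and the quasiconvex hull of a $\Gamma$-invariant set is $\Gamma$-invariant.
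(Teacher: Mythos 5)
Your proof is correct. The paper proves this lemma by contradiction: assuming a sequence $x_n\in Y$ with $\textup{Cov}(\overline{B}(x_n,R)\cap Y,r)\geq n$, it uses $\Gamma$-invariance of the counting function and compactness of the quotient to extract a limit point $x_\infty\in Y$ whose ball of radius $R+1$ would then have infinite covering number, contradicting compactness. Your argument is a direct version of the same underlying principle (cocompactness plus properness), translating an arbitrary ball isometrically into the fixed ball $\overline{B}(x,R+D)\cap Y$; in fact this is exactly the computation the paper itself performs later in Lemma \ref{lemma-uniform-packing-convergence} for sequences of spaces, and it has the advantage of producing the explicit bound $\textup{Pack}_Y(R,r)\leq\textup{Pack}(\overline{B}(x,R+D)\cap Y,r)$ rather than bare finiteness. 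The only point deserving a word of care is that, with the paper's convention that an $r$-dense subset must lie inside the set being covered, the covering number is not literally monotone under inclusion (one loses a factor of $2$ in the scale when restricting a dense set to a subset); your parenthetical remark deducing one finiteness statement from the other via \eqref{packing-covering} disposes of this completely, since the claim is at all scales.
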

\begin{proof}
	We prove only that $Y$ is uniformly covered since the other case is similar. We fix $0<r\leq R$. The map $y\mapsto \text{Cov}(\overline{B}(y,R) \cap Y,r)$ defined on $Y$ is clearly $\Gamma$-invariant. If the thesis is false we could find a sequence of points $x_n\in Y$ such that $\text{Cov}(\overline{B}(x_n,R) \cap Y,r) \geq n$. By $\Gamma$-invariance and the compactness of the quotient we can suppose that $x_n$ converges to some point $x_\infty$ that belongs to $Y$ by Lemma \ref{lemma-quasigeodesic}. Clearly we would have $\text{Cov}(\overline{B}(x_\infty,R+1) \cap Y,r) = \infty$ which is impossible since $\overline{B}(x_\infty \cap Y,R+1)$ is compact.
\end{proof}

We recall the notion of covering entropy. It has been studied by the author in a less general context in \cite{Cav21}.

\begin{defin}
	Let $X$ be a proper metric space and $x\in X$. The \emph{upper covering entropy} of $X$ at scale $r>0$ is the quantity
	$$\overline{h}_\textup{Cov}(X,r) = \limsup_{T\to + \infty}\frac{\log \textup{Cov}(\overline{B}(x,T),r)}{T},$$
	while the \emph{lower covering entropy} of $X$ at scale $r$ is
	$$\underline{h}_\textup{Cov}(X,r) = \liminf_{T\to + \infty}\frac{\log \textup{Cov}(\overline{B}(x,T),r)}{T}.$$
	They do not depend on the point $x\in X$ by a standard argument.
\end{defin}

\begin{lemma}
	\label{lemma-bound-entropy-packing}
	Let $(X,x,\Gamma)\in \mathcal{M}(\delta,D)$. If there exist $r,P>0$ such that \textup{Pack}$_{Y}(72\delta + 3r, r) \leq P$ then \textup{Pack}$_{Y}(T, r) \leq P\cdot(1+P)^{\frac{T}{r} - 1}$ for every $T\geq 0$. In particular $\overline{h}_\textup{Cov}(Y,2r) \leq \frac{\log (1+P)}{r}$.
\end{lemma}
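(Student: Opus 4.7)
The plan is to establish the recursive inequality $\text{Pack}_Y(T,r) \leq P \cdot \text{Pack}_Y(T-r,r)$ for $T \geq r$ by a push-back-and-cluster argument exploiting the $36\delta$-quasiconvexity of $Y = \textup{QC-Hull}(\Lambda(\Gamma))$ proven in Lemma \ref{lemma-quasigeodesic}. The desired exponential bound then drops out of a single iteration, and the covering entropy estimate is an immediate consequence via \eqref{packing-covering}.

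For the recursion, fix $y \in Y$ and let $\{z_1, \dots, z_N\}$ realize $\text{Pack}(\overline{B}(y,T) \cap Y, r)$ as a maximal $2r$-separated subset. For each $i$ I take a geodesic $[y, z_i]$ and mark the point $p_i$ on it at distance $\min(d(y,z_i),\, r+36\delta)$ from $z_i$. By $36\delta$-quasiconvexity of $Y$, there exists $z_i' \in Y$ with $d(p_i, z_i') \leq 36\delta$, and a direct computation (splitting according to whether $d(y,z_i)$ exceeds $r+36\delta$) yields
$$d(y, z_i') \leq T - r, \qquad d(z_i, z_i') \leq r + 72\delta.$$
Now extract a maximal $2r$-separated subset $\{w_1,\dots,w_M\} \subseteq \{z_i'\}$; since these lie in $\overline{B}(y, T-r) \cap Y$, one has $M \leq \text{Pack}_Y(T-r, r)$. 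By maximality each $z_i'$ is within $2r$ of some $w_{j(i)}$, hence $d(z_i, w_{j(i)}) \leq 3r + 72\delta$. The fibre over each $w_j$ is a $2r$-separated subset of $\overline{B}(w_j, 3r+72\delta) \cap Y$, so it has at most $\text{Pack}_Y(72\delta + 3r, r) \leq P$ elements. Summing over fibres yields $N \leq PM \leq P \cdot \text{Pack}_Y(T-r, r)$, and taking the supremum in $y$ produces the recursion.

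Iterating the recursion and using that $\text{Pack}_Y(s, r) \leq 1$ for $s < r$ (any two points of such a ball are at distance $<2r$) gives $\text{Pack}_Y(T,r) \leq P^{\lfloor T/r \rfloor}$; since $P \geq 1$ and $P \leq 1+P$, this is in turn bounded by $P \cdot (1+P)^{T/r - 1}$ for $T \geq r$. The entropy estimate then follows from \eqref{packing-covering}: $\text{Cov}(\overline{B}(y,T) \cap Y, 2r) \leq \text{Pack}(\overline{B}(y,T) \cap Y, r) \leq P(1+P)^{T/r-1}$, so taking logarithms, dividing by $T$, and sending $T \to \infty$ gives $\overline{h}_{\textup{Cov}}(Y, 2r) \leq \frac{\log(1+P)}{r}$. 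The main delicate point is the bookkeeping in the push-back step: one must verify that the $36\delta$-quasiconvexity error propagates to the displacement bound $r + 72\delta$, so that the resulting cluster radius equals exactly $3r + 72\delta$ and matches the hypothesis; the remaining combinatorial iteration is routine.
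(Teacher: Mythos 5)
Your proof is correct and follows essentially the same route as the paper's: a step-$r$ recursion obtained by pushing each point of a maximal $2r$-separated set back by $r+36\delta$ along a geodesic from the centre, using the $36\delta$-quasiconvexity of $Y$ to land in $Y$, and clustering at radius $72\delta+3r$ where the hypothesis $\textup{Pack}_Y(72\delta+3r,r)\le P$ applies; the paper organizes this as covering the annulus $A(x,T-r,T)\cap Y$ by $(72\delta+3r)$-balls centred at a maximal net of $\overline{B}(x,T-r)\cap Y$, but the geometric content is identical and both yield the same entropy bound. The one point to tidy is your case $d(y,z_i)<r+36\delta$, where $p_i=y$ and an arbitrary quasiconvexity point only gives $d(y,z_i')\le 36\delta$, which need not be $\le T-r$; since $p_i=y\in Y$ you can simply take $z_i'=y$ there and the rest goes through unchanged.
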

\begin{proof}
	The proof is the same of Lemma 4.7 of \cite{CavS20}, except from the fact that $Y$ is not geodesic but only $36\delta$-quasigeodesic by Lemma \ref{lemma-quasigeodesic}. We proceed by induction on $k$, where $k$ is the smallest integer such that $T\leq 72\delta + 3r + kr$. For $k=0$ the result  is obvious by our assumption. The inductive step goes as follows: by induction we can find a maximal $2r$-separated subset $\lbrace y_1,\ldots,y_N\rbrace$ of $\overline{B}(x,T-r)\cap Y$ with $N\leq P(1+P)^{\frac{T-r}{r}-1}$. The key step is to show that $\bigcup_{i=1}^N \overline{B}(y_i,72\delta+3r) \supseteq A(x,T-r,T)\cap Y$, where $A(x,T-r,T)$ is the closed annulus centered at $x$ of radii $T-r$ and $T$. Indeed for every point $y\in A(x,T-r,T)$ we consider the point $y'$ along a geodesic segment $[x,y]$ at distance $T - r - 36\delta$ from $x$. By quasiconvexity there is a point $z\in Y$ at distance $\leq 36\delta$ from $y'$. In particular $z\in \overline{B}(x,T-r)$, so $d(z,y_i)\leq 2r$ for some $i=1,\ldots,N$. We conclude that $d(y,y_i)\leq 72\delta + 3r$. The rest of the proof can be done exactly as in Lemma 4.7 of \cite{CavS20}, while the estimate on the upper covering entropy follows trivially using \eqref{packing-covering}.
\end{proof}

\begin{prop}
	\label{prop-critical-exponent-covering-entropy}
	Let $(X,x,\Gamma)\in \mathcal{M}(\delta,D)$. Then $\overline{h}_\textup{Cov}(Y,r)$ and $\underline{h}_\textup{Cov}(Y,r)$ do not depend on $r$
	and the same quantities can be defined replacing the covering function with the packing function.
	Moreover they coincide and
	$$h_\textup{Cov}(Y):=\lim_{T\to + \infty}\frac{\log \textup{Cov}(\overline{B}(x,T)\cap Y,r)}{T} = h_\Gamma < +\infty.$$
\end{prop}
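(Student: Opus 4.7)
The plan is to first establish finiteness and scale-independence of the upper and lower covering entropies of $Y$, and then sandwich $h_{\textup{Cov}}(Y)$ between $h_\Gamma$ from both sides using that $\Gamma$ acts cocompactly on $Y$ with codiameter at most $D$.

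For finiteness, Lemma~\ref{lemma-uniformly-covered} guarantees $P:=\textup{Pack}_Y(72\delta+3r,r)<+\infty$ for every $r>0$, and then Lemma~\ref{lemma-bound-entropy-packing} yields $\overline{h}_\textup{Cov}(Y,2r)\leq \frac{\log(1+P)}{r}<+\infty$. For scale-independence, given $0<r_1\leq r_2$, each ball $\overline{B}(y,r_2)\cap Y$ can be covered by at most $N:=\textup{Cov}_Y(r_2,r_1)$ balls of radius $r_1$, and $N$ is finite and independent of $y$ by Lemma~\ref{lemma-uniformly-covered}; this gives
$$\textup{Cov}(\overline{B}(x,T)\cap Y,r_1)\leq N\cdot \textup{Cov}(\overline{B}(x,T)\cap Y,r_2).$$
Taking logarithms, dividing by $T$, and passing to $\limsup$ and $\liminf$ shows that $\overline{h}_\textup{Cov}(Y,r)$ and $\underline{h}_\textup{Cov}(Y,r)$ are both independent of $r$; the inequalities \eqref{packing-covering} then imply that the analogously defined packing entropies coincide with the covering ones.

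It remains to identify the common value with $h_\Gamma$. On the one hand, since $\Gamma$ is discrete and torsion-free, every non-identity element is hyperbolic or parabolic and hence fixed-point-free in $X$, so $s:=\textup{sys}(\Gamma,x)>0$. Since $\Gamma$ preserves $Y$ and $x\in Y$, the orbit $\Gamma x\cap \overline{B}(x,T)\subseteq Y\cap\overline{B}(x,T)$ is an $s$-separated subset, whence
$$\#\Gamma x\cap\overline{B}(x,T)\leq \textup{Pack}(Y\cap\overline{B}(x,T),s/2);$$
dividing by $T$ and invoking Lemma~\ref{entropy-critical} gives $h_\Gamma\leq \underline{h}_\textup{Cov}(Y,s/2)$. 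On the other hand, the codiameter bound says that $\Gamma x$ is $D$-dense in $Y$, so $\Gamma x\cap \overline{B}(x,T+D)$ is $D$-dense in $Y\cap \overline{B}(x,T)$, yielding
$$\textup{Cov}(Y\cap\overline{B}(x,T),D)\leq \#\Gamma x\cap \overline{B}(x,T+D),$$
and passing to $\limsup$ gives $\overline{h}_\textup{Cov}(Y,D)\leq h_\Gamma$. Combined with scale-independence, this sandwiches $h_\Gamma\leq \underline{h}_\textup{Cov}(Y,r)\leq \overline{h}_\textup{Cov}(Y,r)\leq h_\Gamma$ for every $r>0$, so the limit exists and equals $h_\Gamma<+\infty$.

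The main technical point I anticipate is the scale-independence step: one must use the cocompact action on $Y$ (packaged by Lemma~\ref{lemma-uniformly-covered}) to ensure that the refinement cost $N$ from a coarse cover to a finer one is bounded uniformly across $Y$; without such uniformity, entropies at different scales could a priori diverge. Everything else reduces to the standard Bowen-style comparison between orbit counts and covering numbers of the quasiconvex hull, together with the finiteness estimate already provided by Lemma~\ref{lemma-bound-entropy-packing}.
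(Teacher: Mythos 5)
Your proposal is correct and follows essentially the same route as the paper: scale-independence via the uniform covering bound of Lemma~\ref{lemma-uniformly-covered}, finiteness via Lemma~\ref{lemma-bound-entropy-packing}, and the identification with $h_\Gamma$ by sandwiching orbit counts between packing numbers at the systole scale and covering numbers at scale $D$, invoking Lemma~\ref{entropy-critical}. The only cosmetic difference is that you use the systole at the basepoint where the paper uses the global systole; both are positive for a discrete torsion-free group and serve the same purpose.
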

\begin{proof}
	Let us fix $0<r\leq r'$. We have:
	$$\textup{Cov}(\overline{B}(x,T)\cap Y,r') \leq \textup{Cov}(\overline{B}(x,T)\cap Y,r)$$
	and 
	$$\textup{Cov}(\overline{B}(x,T)\cap Y,r) \leq \textup{Cov}(\overline{B}(x,T)\cap Y,r') \cdot \text{Cov}_Y(r',r).$$
	The quantity $\text{Cov}_Y(r',r)$ is finite by Lemma \ref{lemma-uniformly-covered}. These inequalities easily imply that $\overline{h}_\textup{Cov}(Y,r) = \overline{h}_\textup{Cov}(Y,r')$ and $\underline{h}_\textup{Cov}(Y,r) = \underline{h}_\textup{Cov}(Y,r')$.
	Moreover by \eqref{packing-covering} these quantities can be defined replacing the covering function with the packing function. Furthermore an application of Lemma \ref{lemma-uniformly-covered} and Lemma \ref{lemma-bound-entropy-packing} shows that the upper covering entropy of $Y$ is finite.\\
	Let $2s=\text{sys}(\Gamma,X) >0$. We have
	$$\text{Cov}(\overline{B}(x,T)\cap Y,D) \leq \#\Gamma x \cap \overline{B}(x,T+D)$$
	and
	$$\text{Pack}(\overline{B}(x,T)\cap Y,s) \geq \#\Gamma x \cap \overline{B}(x,T).$$
	Observe that the sequence $\#\Gamma x \cap \overline{B}(x,T)$ converges to $h_\Gamma$ when $T$ goes to $+\infty$ by Lemma \ref{entropy-critical}.
	Therefore $\overline{h}_\textup{Cov}(Y,D) \leq h_\Gamma$ and $\underline{h}_\textup{Cov}(Y,s) \geq h_\Gamma$, implying the last part of the thesis.
\end{proof}

\subsection{Convergence of spaces in $\mathcal{M}(\delta,D)$}
The following situation will be called the \emph{standard setting of convergence}: we have a sequence $(X_n,x_n,\Gamma_n) \in \mathcal{M}(\delta,D)$ such that $(X_n,x_n,\Gamma_n) \underset{\textup{eq-pGH}}{\longrightarrow} (X_\infty, x_\infty,\Gamma_\infty)$.
Observe that $X_\infty$ is a proper metric space by definition.

\begin{lemma}
	\label{lemma-uniform-packing-convergence}
	 In the standard setting of convergence $\sup_{n\in\mathbb{N}}\textup{Pack}_{Y_n}(R,r)< +\infty$ for every $0<r\leq R$.
\end{lemma}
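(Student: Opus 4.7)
The plan is to bound $\textup{Pack}_{Y_n}(R,r)$ uniformly by first reducing to packings centered at the basepoint $x_n$ using the cocompactness of $\Gamma_n$ on $Y_n$, and then transporting the resulting problem to the proper limit space $X_\infty$ via the equivariant $\varepsilon$-approximations.

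\textbf{Step 1: reduction to the basepoint.} For any $y_n\in Y_n$, the codiameter bound provides some $g_n\in\Gamma_n$ with $d(g_nx_n,y_n)\leq D$. Since $g_n^{-1}$ is an isometry of $X_n$ preserving $Y_n$ (as $\Lambda(\Gamma_n)$ and hence its quasiconvex hull are $\Gamma_n$-invariant), it maps $\overline{B}(y_n,R)\cap Y_n$ isometrically into $\overline{B}(x_n,R+D)\cap Y_n$, preserving $2r$-separated subsets. Hence
$$\textup{Pack}_{Y_n}(R,r)\;\leq\; \textup{Pack}\!\left(\overline{B}(x_n,R+D)\cap Y_n,\, r\right).$$
This is the only place where the uniform codiameter hypothesis is used.

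\textbf{Step 2: transfer to $X_\infty$ for large $n$.} I would fix $\varepsilon>0$ so small that $\varepsilon<r$ and $R+D+\varepsilon\leq R+D+1<1/\varepsilon$. By definition of equivariant pointed Gromov-Hausdorff convergence, for $n\geq n_\varepsilon$ there is an equivariant $\varepsilon$-approximation $(f_n,\varphi_n,\psi_n)$ between $(X_n,x_n,\Gamma_n)$ and $(X_\infty,x_\infty,\Gamma_\infty)$. The map $f_n$ is defined on $\overline{B}(x_n,R+D)$, sends $x_n$ to $x_\infty$ and distorts distances by at most $\varepsilon$, so
$$f_n\!\left(\overline{B}(x_n,R+D)\cap Y_n\right)\subseteq \overline{B}(x_\infty,R+D+1).$$
If $\{y_n^1,\dots,y_n^N\}$ is a $2r$-separated subset of $\overline{B}(x_n,R+D)\cap Y_n$, then $\{f_n(y_n^i)\}_i$ is a $(2r-\varepsilon)$-separated, hence $r$-separated, subset of $\overline{B}(x_\infty,R+D+1)$. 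Therefore
$$N\;\leq\;\textup{Pack}\!\left(\overline{B}(x_\infty,R+D+1),\, r/2\right),$$
and the right-hand side is finite because $X_\infty$ is proper and so $\overline{B}(x_\infty,R+D+1)$ is compact.

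\textbf{Step 3: finite exceptional indices and conclusion.} For each of the finitely many $n<n_\varepsilon$, $X_n$ is proper and $Y_n$ is closed by Lemma \ref{lemma-quasigeodesic}, so $\overline{B}(x_n,R+D)\cap Y_n$ is compact and has a finite packing number at scale $r$. Combining the bound of Step 2 with the maximum over these finitely many exceptional indices gives $\sup_{n\in\mathbb{N}}\textup{Pack}_{Y_n}(R,r)<+\infty$. The main obstacle I foresee is Step 1: without the uniform codiameter bound one cannot move an arbitrary base in $Y_n$ to the controlled basepoint $x_n$, and the equivariant approximation, which only controls what happens around $x_n$, would give no information about packings at far-away points of $Y_n$. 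Once this cocompact reduction is in place, the transfer to $X_\infty$ is a routine distortion estimate.
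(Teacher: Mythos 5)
Your proof is correct and follows essentially the same two-step strategy as the paper: reduce to a ball centered at the basepoint $x_n$ via the codiameter bound and $\Gamma_n$-invariance of $Y_n$, then bound the packing of that ball uniformly in $n$ using the convergence. The only cosmetic difference is that where the paper invokes Gromov's precompactness theorem for the uniform bound $\sup_n\textup{Pack}(\overline{B}(x_n,R+D)\cap Y_n,r)<+\infty$, you derive it directly from the $\varepsilon$-approximations and the properness of $X_\infty$, which is exactly the content of that citation.
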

\begin{proof}
	By Gromov precompactness Theorem \cite{Gr81} we know that $$\sup_{n\in\mathbb{N}}\textup{Pack}(\overline{B}(x_n,R+D)\cap Y_n,r) =: P < +\infty.$$
	For every $n\in\mathbb{N}$ and every point $y_n \in Y_n$ there is some $g\in \Gamma_n$ such that $d(y_n,g_nx_n)\leq D$. Therefore 
	\begin{equation*}
		\begin{aligned}
			\textup{Pack}(\overline{B}(y_n,R) \cap Y_n,r) &\leq \textup{Pack}(\overline{B}(gx_n,R+D)\cap Y_n,r) \\
			&= \textup{Pack}(\overline{B}(x_n,R+D)\cap Y_n,r) \leq P
		\end{aligned}
	\end{equation*}
	by the $\Gamma_n$-invariance of $Y_n$.
\end{proof}

\begin{cor}
	\label{cor-entropy-bound-convergence}
	In the standard setting of convergence $\sup_{n\in\mathbb{N}}h_{\Gamma_n} < +\infty$.
\end{cor}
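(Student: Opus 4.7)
The plan is to chain together the three results proved immediately before the statement: Proposition \ref{prop-critical-exponent-covering-entropy}, which identifies $h_{\Gamma_n}$ with the covering entropy $h_{\textup{Cov}}(Y_n)$; Lemma \ref{lemma-bound-entropy-packing}, which bounds the covering entropy of $Y_n$ in terms of a packing number at a single fixed pair of scales; and Lemma \ref{lemma-uniform-packing-convergence}, which provides uniformity (in $n$) of such packing numbers. Each individual link is already available, so the corollary is essentially a matter of assembling them in the right order.

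Concretely, I would fix any $r>0$ (for instance $r=1$) and set $R=72\delta+3r$. Lemma \ref{lemma-uniform-packing-convergence}, applied at these scales, yields a finite constant
\[
P := \sup_{n\in\mathbb{N}} \textup{Pack}_{Y_n}(72\delta+3r,\,r) < +\infty,
\]
using that $Y_n = \textup{QC-Hull}(\Lambda(\Gamma_n))$ is $\Gamma_n$-invariant and has diameter $\le D$ quotient. Next, for each fixed $n$ the triple $(X_n,x_n,\Gamma_n)$ lies in $\mathcal{M}(\delta,D)$ and satisfies the hypothesis of Lemma \ref{lemma-bound-entropy-packing} with this $P$; the lemma gives
\[
\overline{h}_{\textup{Cov}}(Y_n,2r) \le \frac{\log(1+P)}{r}.
\]
Finally, Proposition \ref{prop-critical-exponent-covering-entropy} says that the covering entropy of $Y_n$ is independent of the scale and equals $h_{\Gamma_n}$, so
\[
h_{\Gamma_n} = \overline{h}_{\textup{Cov}}(Y_n,2r) \le \frac{\log(1+P)}{r}
\]
for every $n$, and taking the supremum gives the claim.

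There is no real obstacle here: the key quantitative input (the packing-to-entropy estimate) is exactly calibrated to take one packing datum at scales $(72\delta+3r,r)$ and to convert it into a global entropy bound, and the preceding lemma produces that datum uniformly along the sequence. The only conceptual point worth emphasising is that the uniformity in Lemma \ref{lemma-uniform-packing-convergence} uses only Gromov's precompactness and the codiameter bound $D$, and does not require knowing anything about the limit $(X_\infty,x_\infty,\Gamma_\infty)$; in particular the corollary does not yet use discreteness of $\Gamma_\infty$, which is precisely why it can later be fed back (via Proposition \ref{prop-bound-systole}) to establish that discreteness.
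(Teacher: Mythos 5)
Your proof is correct and is essentially identical to the one in the paper: the paper takes $r=1$, obtains $P=\sup_{n}\textup{Pack}_{Y_n}(72\delta+3,1)<+\infty$ from Lemma \ref{lemma-uniform-packing-convergence}, and concludes $h_{\Gamma_n}=h_{\textup{Cov}}(Y_n)\leq\log(1+P)$ via Lemma \ref{lemma-bound-entropy-packing} and Proposition \ref{prop-critical-exponent-covering-entropy}, exactly as you do. No differences worth noting.
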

\begin{proof}
	By Lemma \ref{lemma-uniform-packing-convergence} we have $\sup_{n\in\mathbb{N}}\text{Pack}_{Y_n}(72\delta + 3,1) =: P < +\infty$. Therefore by Proposition \ref{prop-critical-exponent-covering-entropy} and Lemma \ref{lemma-bound-entropy-packing} we have
	$$h_{\Gamma_n} = h_\textup{Cov}(Y_n) \leq \log(1+P).$$
\end{proof}

\begin{cor}
	\label{cor-discrete-limit}
	In the standard setting of convergence $\Gamma_\infty$ is discrete and torsion-free.
\end{cor}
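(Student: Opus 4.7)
The plan is to combine the uniform entropy bound of Corollary \ref{cor-entropy-bound-convergence} with the systolic gap of Proposition \ref{prop-bound-systole}. Since $\sup_n h_{\Gamma_n} \leq H < +\infty$ and every $(X_n, x_n, \Gamma_n)$ belongs to $\mathcal{M}(\delta,D)$, Proposition \ref{prop-bound-systole} produces $s = s(\delta,D,H) > 0$ with $\textup{sys}(\Gamma_n, X_n) \geq s$ for every $n$; equivalently, any non-trivial element of $\Gamma_n$ moves every point of $X_n$ by at least $s$, uniformly in $n$. The rest is just transporting this uniform lower bound through the equivariant $\varepsilon$-approximations $(f_n, \varphi_n, \psi_n)$.

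For discreteness I would first prove the pointwise statement $\textup{sys}(\Gamma_\infty, x_\infty) \geq s$. Assume by contradiction that some $g \in \Gamma_\infty \setminus \{\textup{id}\}$ satisfies $d(x_\infty, g x_\infty) < s$, and pick $\varepsilon$ with $d(x_\infty, g x_\infty) + 3\varepsilon < s$ and $1/\varepsilon$ large enough that $g \in \Sigma_{1/\varepsilon}(\Gamma_\infty, x_\infty)$. Using $f_n(x_n) = x_\infty$, the near-isometry of $f_n$, and the defining inequality for $\psi_n$ at $x_1 = x_n$, a short triangle-inequality chase gives $d(x_n, \psi_n(g) x_n) < d(x_\infty, g x_\infty) + 2\varepsilon < s$ for large $n$, whence $\psi_n(g) = \textup{id}_{\Gamma_n}$ by the uniform systole. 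Plugging $\psi_n(g) = \textup{id}$ back into $d(f_n(\psi_n(g) x_1), g f_n(x_1)) < \varepsilon$ and using the near-surjectivity of $f_n$, one obtains $d(y, gy) < 3\varepsilon$ on every compact subset of $X_\infty$; letting $\varepsilon \to 0$ forces $g = \textup{id}$, a contradiction. To upgrade the pointwise systole to genuine discreteness I would use that $\Gamma_\infty$ is closed in $\textup{Isom}(X_\infty)$: by Ascoli-Arzela each $\Sigma_R(\Gamma_\infty, x_\infty)$ is relatively compact, so if it were infinite one would extract a uniformly convergent subsequence $g_k$ of distinct elements, and the non-trivial elements $h_k = g_k g_{k+1}^{-1} \in \Gamma_\infty$ would satisfy $d(x_\infty, h_k x_\infty) \to 0$, contradicting the pointwise bound.

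For torsion-freeness, suppose $g \in \Gamma_\infty$ has order $m \geq 2$ with $g \neq \textup{id}$. The strategy is to transfer the relation $g^m = \textup{id}$ to the approximating groups and invoke the torsion-freeness of each $\Gamma_n$. Choose $1/\varepsilon$ larger than $m \cdot d(x_\infty, g x_\infty) + 1$ so that, for large $n$, every orbit point $\psi_n(g)^k x_n$, $0 \leq k \leq m$, lies in $B(x_n, 1/\varepsilon)$. An induction on $k$, using the $\psi_n$-inequality at $x_1 = \psi_n(g)^{k-1} x_n$ together with $g$ being an isometry, yields $d(f_n(\psi_n(g)^k x_n), g^k x_\infty) < k \varepsilon$. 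Setting $k = m$ and using $g^m x_\infty = x_\infty$, the near-isometry of $f_n$ gives $d(\psi_n(g)^m x_n, x_n) < (m+1)\varepsilon$, which for $\varepsilon < s/(m+1)$ forces $\psi_n(g)^m = \textup{id}_{\Gamma_n}$. Torsion-freeness of $\Gamma_n$ then yields $\psi_n(g) = \textup{id}_{\Gamma_n}$, and the same argument as in the discreteness step produces $g = \textup{id}$, a contradiction.

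The main obstacle is the bookkeeping in the iterated estimate: one has to keep all $m+1$ orbit points $x_n, \psi_n(g) x_n, \ldots, \psi_n(g)^m x_n$ inside the ball on which $f_n$ is defined and nearly isometric, while simultaneously ensuring that the accumulated error $(m+1)\varepsilon$ stays below the systole $s$. The first constraint is $1/\varepsilon \gg m \cdot d(x_\infty, g x_\infty)$ and the second is $\varepsilon < s/(m+1)$; both can be satisfied because $m$ and $d(x_\infty, g x_\infty)$ are fixed once $g$ is chosen, and the $\varepsilon$-error accumulates only linearly in $m$, so the uniform systole $s$ eventually dominates.
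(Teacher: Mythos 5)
Your proof is correct, and it rests on exactly the same two pillars as the paper's: the uniform entropy bound of Corollary \ref{cor-entropy-bound-convergence} fed into Proposition \ref{prop-bound-systole} to get a uniform systole $s>0$ for all the $\Gamma_n$. Where you differ is in the transfer mechanism. The paper routes everything through the ultralimit: by Proposition \ref{prop-GH-ultralimit} it suffices to treat $\Gamma_\omega$, and there the systole bound passes to the limit in one line ($d(y_\omega,g_\omega y_\omega)=\omega$-$\lim d(y_n,g_ny_n)\geq s$), discreteness follows from properness of $X_\omega$, and torsion-freeness is obtained by noting that an elliptic element of a discrete group has finite order and then reading the relation $g_\omega^k=\mathrm{id}$ back into $\Gamma_n$ $\omega$-a.s. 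You instead chase the estimates directly through Fukaya's $\varepsilon$-approximations $(f_n,\varphi_n,\psi_n)$: this avoids the ultrafilter formalism entirely at the price of the bookkeeping you describe (keeping the orbit $\psi_n(g)^kx_n$ inside $B(x_n,1/\varepsilon)$ and the accumulated error $(m+1)\varepsilon$ below $s$), and it requires you to separately upgrade the pointwise systole bound at $x_\infty$ to discreteness via closedness of $\Gamma_\infty$ and Ascoli--Arzel\`a --- a step the ultralimit argument gets for free from Proposition \ref{ultralimit-closed}. Your torsion-freeness argument is in fact slightly more direct than the paper's for the literal statement, since you start from a finite-order element rather than an elliptic one. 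Two small points to tidy: in the Ascoli--Arzel\`a step it is cleaner to take $h_k=g_k^{-1}g_{k+1}$, so that $d(x_\infty,h_kx_\infty)=d(g_kx_\infty,g_{k+1}x_\infty)\to 0$ without having to invoke continuity of inversion; and when applying the defining inequality for $\psi_n$ at $x_1=x_n$ you should note that $\psi_n(g)x_n\in \overline{B}(x_n,1/\varepsilon)$ is automatic because $\psi_n$ takes values in $\Sigma_{1/\varepsilon}(\Gamma_n,x_n)$. Neither affects the validity of the argument.
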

\begin{proof}
	By Corollary \ref{cor-entropy-bound-convergence} and Proposition \ref{prop-bound-systole} there is some $s>0$ such that $\text{sys}(\Gamma_n,X_n) \geq s$ for every $n\in \mathbb{N}$.
	Let $\omega$ be a non-principal ultrafilter. By Proposition \ref{prop-GH-ultralimit} it is enough to show that $\Gamma_\omega$ is discrete and torsion-free.
	Let $g_\omega = \omega$-$\lim g_n$ be a non-trivial element of $\Gamma_\omega$ and $y_\omega = \omega$-$\lim y_n$ be a point of $X_\omega$. We know that $d(y_n, g_n y_n )\geq s$ for $\omega$-a.e.$(n)$. This implies $d(y_\omega, g_\omega y_\omega )\geq s$. Since this is true for every $g_\omega \in \Gamma_\omega$ and every $y_\omega \in X_\omega$ we conclude that sys$(\Gamma_\omega, X_\omega) \geq s$. Since $X_\omega$ is proper we conclude that $\Gamma_\omega$ is discrete. Take now an elliptic element $g_\omega=\omega$-$\lim g_n$. It is classical that $g_\omega$ must have finite order since $\Gamma_\omega$ is discrete (see for instance Remark 8.16 of \cite{BCGS}), i.e. $g_\omega^k =\text{id}$ for some $k\in\mathbb{Z}\setminus \lbrace 0 \rbrace$.
	This means that $\omega$-$\lim d(g_n^k x_n, x_n) = 0$, so $g_n^k = \text{id}$ for $\omega$-a.e.$(n)$. This implies $g_n = \text{id}$ for $\omega$-a.e.$(n)$ and therefore $g_\omega = \text{id}$. In other words $\Gamma_\omega$ is torsion-free.
\end{proof}

The next step is to show the stability of the boundary under convergence.
\begin{prop}
	\label{boundary-convergence}
	Let $(X_n,x_n)$ be a sequence of proper, $\delta$-hyperbolic metric spaces and let $D_{x_n,a}$ be a standard visual metric of center $x_n$ and parameter $a$ on $\partial X_n$. Let $\omega$ be a non-principal ultrafilter and let $(X_\omega, x_\omega)$ be the ultralimit of the sequence $(X_n,x_n)$. Then there exists a natural map $\Psi\colon \omega$-$\lim (\partial X_n, D_{x_n,a}) \to \partial X_\omega$ which is a homeomorphism onto the image.
\end{prop}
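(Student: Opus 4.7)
The plan is to define $\Psi$ by taking ultralimits of geodesic rays. Given a representative $(z_n)$ of a point $z_\omega = \omega\text{-}\lim z_n$ in the ultralimit of the boundaries, for each $n$ I choose a geodesic ray $\xi_n = [x_n, z_n]$ in $X_n$. Since $d(x_n, \xi_n(t)) = t$ for all $t$, the ultralimit $\xi_\omega(t) := \omega\text{-}\lim \xi_n(t)$ is well defined and yields an isometric embedding $[0, +\infty) \to X_\omega$ with $\xi_\omega(0) = x_\omega$; I set $\Psi(z_\omega) := \xi_\omega^+ \in \partial X_\omega$. Independence of the particular choice of each ray $\xi_n$ will follow from Lemma \ref{parallel-geodesics}: two rays from $x_n$ to $z_n$ stay within $8\delta$, so their ultralimits share the same endpoint at infinity.

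The heart of the argument is the comparison estimate
\[ \bigl|(\Psi(z_\omega), \Psi(z_\omega'))_{x_\omega} - \omega\text{-}\lim (z_n, z_n')_{x_n}\bigr| \leq C\delta, \]
valid for any two representatives $(z_n), (z_n')$, with a constant $C = C(\delta)$ and the natural convention when both sides are $+\infty$. To prove it, I combine the identity $(\xi_n(T), \xi_n'(T))_{x_n} = T - \tfrac{1}{2}d(\xi_n(T), \xi_n'(T))$ with both parts of Lemma \ref{product-rays} to get, for every fixed $T \geq 0$,
\[ \bigl|(\xi_n(T), \xi_n'(T))_{x_n} - \min\{T, (z_n, z_n')_{x_n}\}\bigr| \leq C\delta. \]
Passing to $\omega$-limits (distances transfer to the ultralimit) gives the analogous estimate with $(\xi_\omega(T), \xi_\omega'(T))_{x_\omega}$ and $\omega\text{-}\lim (z_n, z_n')_{x_n}$ on the left, and then letting $T \to \infty$ and applying \eqref{product-boundary-property} inside $X_\omega$ (which is $\delta$-hyperbolic, because the $4$-points condition \eqref{four-points-condition} passes to ultralimits) yields the claim.

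Everything else follows from this estimate together with the bounds \eqref{visual-metric} for a standard visual metric: for a sequence $z_\omega^k \to z_\omega$ in the source, one has $\omega\text{-}\lim D_{x_n,a}(z_n^k, z_n) \to 0$ as $k\to\infty$, which by the visual bounds is equivalent to $\omega\text{-}\lim (z_n^k, z_n)_{x_n} \to +\infty$, which by the main estimate is equivalent to $(\Psi(z_\omega^k), \Psi(z_\omega))_{x_\omega} \to +\infty$, i.e.\ to convergence $\Psi(z_\omega^k)\to\Psi(z_\omega)$ in the topology of $\partial X_\omega$. Applied with $k$ fixed this gives independence of the representative and injectivity; applied with $k$ varying it gives continuity of $\Psi$ and of its inverse on the image. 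The main obstacle I expect is the comparison estimate in the case $\omega\text{-}\lim (z_n, z_n')_{x_n} = +\infty$: one must be careful that as $T$ grows the interior Gromov products at depth $T$ track $\min\{T,(z_n,z_n')_{x_n}\}$ uniformly enough in $n$ (along $\omega$) to allow exchanging the $T\to\infty$ limit with the $\omega$-limit, and that the last step using \eqref{product-boundary-property} inside $X_\omega$ costs only the final $\delta$.
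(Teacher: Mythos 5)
Your proposal is correct and follows essentially the same route as the paper: define $\Psi$ via ultralimits of geodesic rays and use Lemma \ref{product-rays} in both directions to transfer Gromov-product information through the ultralimit at a fixed depth $T$, then let $T\to\infty$ via \eqref{product-boundary-property}. The only difference is organizational — you package the mechanism as a single uniform comparison estimate and deduce well-definedness, injectivity and bicontinuity from it at once, whereas the paper runs the same $\varepsilon$/$T$ argument separately in each step.
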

\begin{obs}
	We point out that:
	\begin{itemize}
		\item[(1)] the spaces $\partial X_n$ are compact with diameter at most $1$ then the ultralimit $\omega$-$\lim \partial X_n$ does not depend on the basepoints.
		\item[(2)] In general the map $\Psi$ is not surjective: let $X_n$ be the closed ball $\overline{B}(o,n)$ inside the hyperbolic plane $\mathbb{H}^2$, where $o$ is a fixed basepoint. Each $X_n$ is proper and $\delta$-hyperbolic for the same $\delta$, but $\partial X_n = \emptyset$. Therefore $\omega$-$\lim \partial X_n = \emptyset$. On the other hand $X_\omega = \mathbb{H}^2$ and $\partial X_\omega \neq \emptyset$.
		\item[(3)] It is possible to prove, but we will not do it because it is not necessary to our scope, that if for each point $y_n$ of $X_n$ there is a geodesic ray $[x_n,z_n]$ passing at distance $\leq \delta$ from $y_n$ then the map $\Psi$ is surjective. Moreover when $\Psi$ is surjective then the metric induced on $\partial X_\omega$ by $\Psi$ is a visual metric of center $x_\omega$ and parameter $a$.
	\end{itemize}
\end{obs}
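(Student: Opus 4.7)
The plan is to address the three items of the remark in turn, treating the second as an easy verification and spending most of the effort on the third.

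For (1), the key observation is that every $\partial X_n$ has diameter $\leq 1$ under the standard visual metric, so every sequence $(z_n) \in \prod_n \partial X_n$ satisfies $D_{x_n,a}(z_n, z_n') \leq 1$ regardless of the choice of basepoints. Thus admissibility is automatic, the collection of admissible sequences is intrinsic, and the ultralimit distance $\omega\text{-}\lim D_{x_n,a}(z_n,z_n')$ itself does not involve any basepoints. Passing to the quotient by the $\omega$-distance-zero relation therefore yields the same metric space no matter how basepoints are chosen.

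For (2), I would verify that $X_n = \overline{B}(o,n) \subseteq \mathbb{H}^2$, endowed with the restricted metric, fits the setting of the proposition: each $X_n$ is proper as a closed subset of a proper space, and the $4$-point condition \eqref{four-points-condition} is hereditary, so all $X_n$ are $\delta$-hyperbolic for a common $\delta$. Since $X_n$ is bounded, no sequence in $X_n$ can satisfy $\lim_{n,m}(z_n,z_m)_x = +\infty$, whence $\partial X_n = \emptyset$ and $\omega\text{-}\lim \partial X_n = \emptyset$. Taking $x_n = o$, the ultralimit $(X_\omega, x_\omega)$ is isometric to $(\mathbb{H}^2, o)$: every sequence $(y_n)$ with $d(o, y_n) \leq M$ lives inside the compact ball $\overline{B}(o, M) \subset \mathbb{H}^2$, and Lemma 10.25 of \cite{DK18} produces a unique $y \in \overline{B}(o,M)$ realizing its ultralimit. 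Since $\partial \mathbb{H}^2 \neq \emptyset$, the map $\Psi$ cannot be surjective.

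For (3), assume that every $y_n \in X_n$ lies within $\delta$ of some geodesic ray $[x_n, z_n]$. To prove surjectivity, fix $z_\omega \in \partial X_\omega$ and a geodesic ray $[x_\omega, z_\omega]$, and write its integer-time points as $y_\omega^k = \omega\text{-}\lim_n y_n^k$. For each pair $(n,k)$, pick by hypothesis $z_n^k \in \partial X_n$ with $d(y_n^k, [x_n, z_n^k]) \leq \delta$; Lemma \ref{projection} then gives $(y_n^k, z_n^k)_{x_n} \geq d(x_n, y_n^k) - 5\delta$. A diagonal extraction $k(n) \to \infty$ produces $z_n = z_n^{k(n)} \in \partial X_n$, which defines an admissible point $z = [(z_n)] \in \omega\text{-}\lim \partial X_n$ by (1). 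Using Lemma \ref{product-rays} in $X_n$ to compare $[x_n, z_n]$ with a ray to $y_n^k$, then passing to the ultralimit, one checks that the ray $\omega\text{-}\lim [x_n, z_n]$ is asymptotic to $z_\omega$, hence $\Psi(z) = z_\omega$. For the visual-metric claim, the induced metric $\overline{D}$ on $\Psi(\omega\text{-}\lim \partial X_n)$ is $\overline{D}(\Psi([(z_n^1)]), \Psi([(z_n^2)])) = \omega\text{-}\lim D_{x_n,a}(z_n^1, z_n^2)$; combining the standard visual inequalities \eqref{visual-metric} on each $X_n$ with the identity $\omega\text{-}\lim(z_n^1, z_n^2)_{x_n} = (\Psi([(z_n^1)]), \Psi([(z_n^2)]))_{x_\omega}$ up to bounded additive error (again via Lemma \ref{product-rays} and ultralimits of rays) yields that $\overline{D}$ is a visual metric of center $x_\omega$ and parameter $a$.

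The main obstacle is the matching of Gromov products in the last step of (3): one must argue that $\omega\text{-}\lim(z_n^1,z_n^2)_{x_n}$ agrees, up to an additive constant depending only on $\delta$, with the Gromov product at infinity in $X_\omega$. The cleanest route is to realize both quantities along chosen geodesic rays, pass to ultralimits (using the properness of $X_\omega$ to ensure that ultralimits of rays are again rays), and invoke Lemma \ref{product-rays} on both sides; the diagonal extraction in the surjectivity step must also be performed carefully to guarantee that the resulting $z$ genuinely maps to $z_\omega$ rather than to a nearby boundary point.
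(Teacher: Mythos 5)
Your arguments for (1) and (2) are correct and coincide with the justifications the paper embeds in the statement of the remark itself: the uniform diameter bound makes every sequence of boundary points admissible, so the ultralimit metric space is independent of basepoints; and the balls $\overline{B}(o,n)\subseteq\mathbb{H}^2$ are convex, hence geodesic, inherit the four-point condition \eqref{four-points-condition}, have empty boundary, and have ultralimit isometric to $\mathbb{H}^2$. For item (3), however, the paper explicitly declines to give a proof, so there you are supplying an argument rather than matching one. Your sketch is essentially right and, importantly, singles out the two genuine difficulties: the diagonal extraction, which should be organized as in Case 2 of the proof of Proposition \ref{ultralimit-closed} (nested full-measure sets $A_k$ encoding finitely many closeness conditions up to scale $k$, and $z_n:=z_n^{j(n)}$ for $n\in A_{j(n)}\setminus A_{j(n)+1}$, so that for each fixed $k$ the limit ray fellow-travels $\xi_{x_\omega,z_\omega}$ on $[0,k]$ with a constant independent of $k$); and the two-sided comparison of $\omega$-$\lim(z_n^1,z_n^2)_{x_n}$ with $(z_\omega^1,z_\omega^2)_{x_\omega}$ up to an additive error of a few $\delta$, which is obtained by running both directions of Lemma \ref{product-rays} on the rays and their ultralimits exactly as in the good-definition and injectivity steps of the proof of Proposition \ref{boundary-convergence}.

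One local correction: the bound $(y_n^k,z_n^k)_{x_n}\geq d(x_n,y_n^k)-5\delta$ is not an application of Lemma \ref{projection}, which estimates $(y,z)_x$ from below by $d(x,[y,z])-4\delta$ and would here require first showing that the geodesics from $y_n^k$ to points far out on the ray $[x_n,z_n^k]$ avoid a large ball around $x_n$. It is cleaner to take the point $p$ of the ray $[x_n,z_n^k]$ with $d(p,y_n^k)\leq\delta$, note that $(p,z_n^k)_{x_n}\geq d(x_n,p)$ and $(y_n^k,p)_{x_n}\geq d(x_n,p)-\delta$, and apply \eqref{hyperbolicity-boundary} in its mixed form for one point of $X$ and one of $\partial X$; this yields $(y_n^k,z_n^k)_{x_n}\geq d(x_n,y_n^k)-c\delta$ for a small absolute constant $c$, which is all you need. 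This slip does not affect the structure of the argument, and I see no genuine gap in the proposal.
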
 
\begin{proof}
	A point of $\omega$-$\lim \partial X_n$ is a class of a sequence of points $(z_n) \in \partial X_n$ and for each point $z_n$ there exists a geodesic ray $\xi_{x_n,z_n}$.
	The sequence of geodesic rays $(\xi_{x_n,z_n})$ defines an ultralimit geodesic ray $\xi$ of $X_\omega$ with $\xi(0)=x_\omega$ (cp. \cite{CavS20}, Lemma A.7) which provides a point of $\partial X_\omega$. We denote this point by $z_\omega$ and $\xi$ by $\xi_{x_\omega,z_\omega}$.
	We define the map $\Psi \colon \omega$-$\lim \partial X_n \to \partial X_\omega$ as $\Psi((z_n)) = \xi_{x_\omega,z_\omega}^+ = z_\omega$. \\
	{\textbf{Good definition.}} We need to show that $\Psi$ is well defined, i.e. it does not depend on the choice of the geodesic ray $\xi_{x_n,z_n}$ and on the choice of the sequence $(z_n)$. Let $(z_n')$ be a sequence of points equivalent to $(z_n)$, i.e. $\omega\text{-}\lim D_{x_n,a}(z_n,z_n') = 0$. Choose geodesic rays $\xi_{x_n,z_n}$, $\xi_{x_n,z_n'}$.
	For every $n$ the metric $D_{x_n,a}$ is a standard visual metric, then for every fixed $\varepsilon > 0$ it holds $(z_n,z_n')_{x_n} > \log\frac{1}{\varepsilon} =: T_\varepsilon$ for $\omega$-a.e.$(n)$. Thus $d(\xi_{x_n,z_n}(T_\varepsilon - \delta), \xi_{x_n,z_n'}(T_\varepsilon - \delta)) \leq 4\delta$ by Lemma \ref{product-rays}, $\omega$-a.s. We conclude that $d(\xi_{x_\omega,z_\omega}(T_\varepsilon - \delta), \xi_{x_\omega,z_\omega'}(T_\varepsilon - \delta)) < 6\delta$ $\omega$-a.s. Therefore, again by Lemma \ref{product-rays}, $(z_\omega,z_\omega')_{x_\omega} > T_\varepsilon - 4\delta$. Thus $(z_\omega,z_\omega')_{x_\omega} = +\infty$ by the arbirariness of $\varepsilon$, i.e. $z_\omega = z_\omega'$.\\
	\textbf{Injectivity.} The next step is to show that $\Psi$ is injective. If two sequence of points $(z_n), (z_n')$ have the same image under $\Psi$ then $(\xi_{x_\omega,z_\omega}^+, \xi_{x_\omega,z_\omega'}^+)_{x_\omega} = +\infty$. So $(\xi_{x_\omega,z_\omega}^+, \xi_{x_\omega,z_\omega'}^+)_{x_\omega} \geq T$ for every fixed $T\geq 0$. Hence $d(\xi_{x_\omega,z_\omega}(T - \delta), \xi_{x_\omega,z_\omega'}(T - \delta)) \leq 4\delta$, by Lemma \ref{product-rays}. Then $d(\xi_{x_n,z_n}(T - \delta), \xi_{x_n,z_n'}(T - \delta)) < 6\delta$ $\omega$-a.s., i.e. $(z_n,z_n')_{x_n} > T- 4\delta$ $\omega$-a.s., again by Lemma \ref{product-rays}.
	Therefore $D_{x_n,a}(z_n,z_n')\leq e^{-a(T - 4\delta)}$. Since this is true $\omega$-a.s. we get $\omega$-$\lim D_{x_n,a}(z_n,z_n') \leq e^{-a(T - 4\delta)}$ for $\omega$-a.e.$(n)$. By the arbitrariness of $T$ we deduce that $\omega$-$\lim D_{x_n,a}(z_n,z_n') = 0$, i.e. $(z_n) = (z_n')$ as elements of $\omega$-$\lim \partial X_n$. \\
	\textbf{Homeomorphism.} Let us show $\Psi$ is continuous. Both $\omega$-$\lim \partial X_n$ and $\partial X_\omega$ are metrizable, then it is enough to check the continuity on sequences of points. We take a sequence $(z_n^k)_{k\in\mathbb{N}}$ converging to $(z_n^\infty)$ in $\omega$-$\lim \partial X_n$. By definition for every $\varepsilon > 0$ there exists $k_\varepsilon \geq 0$ such that if $k\geq k_\varepsilon$ then $\omega$-$\lim D_{x_n,a}(z_n^k, z_n^\infty) <\varepsilon$. 
	Therefore for every fixed $k\geq k_\varepsilon$ it holds $(z_n^k,z_n^\infty)_{x_n} \geq \log\frac{1}{\varepsilon} =: T_\varepsilon$ for $\omega$-a.e.$(n)$. As usual we conclude that $d(\xi_{x_nz_n^k}(T_\varepsilon - \delta),\xi_{x_nz_n^\infty}(T_\varepsilon - \delta)) \leq 4\delta$  $\omega$-a.s. Thus $d(\xi_{x_\omega z_\omega^k}(T_\varepsilon - \delta),\xi_{x_\omega,z_\omega^\infty}(T_\varepsilon - \delta)) < 6\delta$ for every fixed $k\geq k_\varepsilon$. Again this implies $(z_\omega^k,z_\omega^\infty)_{x_\omega} > T_\varepsilon - 4\delta$ for all $k\geq k_\varepsilon$. By the arbitrariness of $\varepsilon$ we get that $z_\omega^k$ converges to $z_\omega^\infty$ when $k$ goes to $+\infty$.\\
	The continuity of the inverse map defined on the image of $\Psi$ can be proved in a similar way.
%	\textbf{The metric on $\partial X_\omega$.} Since $\Psi$ is an homeomorphism we can endow $\partial X_\omega$ with the metric induced by $\Psi$, i.e. $D(z_\omega, z_\omega') = \omega$-$\lim D_{x_n,a}(z_n,z_n')$, where $z_n$ and $z_n'$ are sequences such that $\Psi(z_n)=z_\omega$ and $\Psi(z_n')=z_\omega'$. It remains to show it is a visual metric. We show one of the two conditions since the other is similar. We take $z_\omega = \Psi(z_n), z_\omega'=\Psi(z_n')$ and we set $D_n:= D_{x_n,a}(z_n,z_n')$. By definition $D_\omega = \omega$-$\lim D_n = D(z_\omega, z_\omega')$. Since each $D_{x_n,a}$ is a standard visual metric we get $(z_n,z_n')_{x_n}\leq \frac{1}{a}\log\frac{1}{D_n}=:T_n$ for every $n$ and by Lemma \ref{product-rays} we conclude that $d(\xi_{x_nz_n}(T_n + 3\delta), \xi_{x_nz_n'}(T_n + 3\delta)) \geq 6\delta$ for every $n$. There are two possibilities: $T_\omega:=\omega$-$\lim T_n$ is either $+\infty$ or a positive real number. In the first case we have $D_\omega = 0$ and so there is nothing to prove. In the second case we know that $d(\xi_{z_\omega}(T_\omega + 3\delta), \xi_{z_\omega'}(T_\omega + 3\delta)) \geq 6\delta$ and so by Lemma \ref{product-rays} we conclude that $(z_\omega,z_\omega')_{x_\omega} < T_\omega +\delta = \frac{1}{a}\log\frac{1}{D_\omega} + \delta$, implying $D_\omega < e^{\delta}e^{-a(z_\omega,z_\omega')_{x_\omega}}$.
\end{proof}

\begin{proof}[Proof of Theorem \ref{theorem-main}.(i).]
	In order to simplify the notations we fix a non-principal ultrafilter $\omega$. We know that $(X_\omega, x_\omega, \Gamma_\omega)$ is equivariantly isometric to $(X_\infty, x_\infty, \Gamma_\infty)$ by Proposition \ref{prop-GH-ultralimit} and that $X_\omega$ is a proper metric space, so we can prove all the properties for this triple. It is classical that the ultralimit of geodesic, $\delta$-hyperbolic metric spaces is a geodesic and $\delta$-hyperbolic metric space, see for instance \cite{DK18}. Moreover by Corollary \ref{cor-discrete-limit} the group $\Gamma_\omega$ is discrete and torsion-free.\\
%	By Proposition \ref{prop-entropy-bound-convergence} we know that the critical exponents $h_{\Gamma_n}$ are uniformly bounded by some $H$. Therefore the global free-systoles $\text{sys}^\diamond(\Gamma_n,X_n)$ are all bigger than or equal to some $s_0>0$ by Proposition \ref{prop-bound-systole}. This fact, together with the bound on the order of torsion elements, shows that $\Gamma_\omega$ is discrete by Lemma \ref{lemma-discrete-limit}.
	Let $\Psi$ be the homeomorphism onto the image given by Proposition \ref{boundary-convergence}. We claim that $\Lambda(\Gamma_\omega) = \Psi(\omega$-$\lim \Lambda(\Gamma_n))$.
	We fix a sequence $z_n \in \Lambda(\Gamma_n)$ and we observe that by Lemma \ref{approximation-ray-line} and the cocompactness of the action of $\Gamma_n$ on $\text{QC-Hull}(\Lambda(\Gamma_n))$ we can find a sequence $(g_n^k)_{k\in \mathbb{N}}\subseteq \Gamma_n$ such that, denoted by $\xi_{x_n,z_n}$ one geodesic ray $[x_n,z_n]$, it holds:
	\begin{itemize}
		\item[(a)] $g_n^k x_n$ converges to $z_n$ when $k$ tends to $+\infty$;
		\item[(b)] $g_n^0 = \text{id}$;
		\item[(c)] $d(g_n^k x_n, g_n^{k+1}x_n)\leq 28\delta + 2D$;
		\item[(d)] $d(g_n^k x_n, \xi_{x_n,z_n}(k)) \leq 14\delta + D$.
	\end{itemize}
	For every $k \in \mathbb{N}$ the sequence $g_n^k$ is admissible by (b) and (c), so it defines a limit isometry $g_\omega^k \in \Gamma_\omega$. Moreover, if $\xi_{x_\omega,z_\omega}$ is the ultralimit of the sequence of geodesic rays $\xi_{x_n,z_n}$, we have $d(g_\omega^k x_\omega, \xi_{x_\omega,z_\omega}(k)) \leq 14\delta + D$ for every $k\in \mathbb{N}$. Observe that $\xi_{x_\omega,z_\omega}^+ = \Psi((z_n))$ by definition of $\Psi$. As a consequence the sequence $g_\omega^k x_\omega$ converges to $\Psi((z_n))$, i.e. $\Psi(z_n)\in \Lambda(\Gamma_\omega)$. This shows that $\Psi(\omega\text{-}\lim \Lambda(\Gamma_n)) \subseteq \Lambda(\Gamma_\omega)$. Clearly $\Gamma_\omega$ acts on $\omega\text{-}\lim \Lambda(\Gamma_n)$ by $(g_n)(z_n) = (g_n z_n)$ and this action commutes with $\Psi$. The set $\omega\text{-}\lim \Lambda(\Gamma_n)$ is $\Gamma_\omega$-invariant and closed. The $\Gamma_\omega$-invariance is trivial, so let us check the closure. If $(z_n^k)_{k\in \mathbb{N}} \in \omega\text{-}\lim \Lambda(\Gamma_n)$ is a sequence converging to $(z_n^\infty)$ and $z_n^\infty \notin \omega\text{-}\lim \Lambda(\Gamma_n)$ then there exists $\varepsilon > 0$ such that $D_{x_n,a}(z_n^\infty, \Lambda(\Gamma_n)) \geq \varepsilon$ $\omega$-a.s. This is a contradiction. Therefore also the set $\Psi(\omega\text{-}\lim \Lambda(\Gamma_n))$ is closed and $\Gamma_\omega$-invariant. By Proposition \ref{prop-minimality-limit-set} we conclude that $\Lambda(\Gamma_\omega) = \Psi(\omega$-$\lim \Lambda(\Gamma_n))$.
	This also implies that $\omega$-$\lim \text{QC-Hull}(\Lambda(\Gamma_n)) = \text{QC-Hull}(\Lambda(\Gamma_\omega))$ and so $x_\omega \in \text{QC-Hull}(\Lambda(\Gamma_\omega))$.
	For every two points $y_\omega, y_\omega' \in \text{QC-Hull}(\Lambda(\Gamma_\omega))$ there exist sequences of points $y_n, y_n' \in \text{QC-Hull}(\Lambda(\Gamma_n))$ such that $y_\omega = \omega$-$\lim y_n$ and $y_\omega' = \omega$-$\lim y_n'$. So there exists $g_n \in \Gamma_n$ such that $d(g_ny_n,y_n')\leq D$. The sequence $g_n$ is clearly admissible so it defines an element $g_\omega = \omega$-$\lim g_n$ of $\Gamma_\omega$ and $d(g_\omega y_\omega, y_\omega')\leq D$, implying that the action of $\Gamma_\omega$ on $\text{QC-Hull}(\Lambda(\Gamma_\omega))$ is cocompact with codiameter $\leq D$.	
	It remains only to show that $\Gamma_\omega$ is non-elementary. If $\Gamma_\omega$ is elementary then $\text{QC-Hull}(\Lambda(\Gamma_\omega)) = \mathbb{R}$ and $\Gamma_\omega$ acts on $\mathbb{R}$ as $\mathbb{Z}_\tau$, the group generated by the translation of length $\tau$, for some $\tau > 0$. Denote by $g_\omega = \omega$-$\lim g_n$ the element corresponding to this translation. For every $k\in\mathbb{N}$ we notice that 
	$$A_\omega(k)=\left\lbrace h_\omega \in \Gamma_\omega \text{ s.t. } d(x_\omega, h_\omega x_\omega) < (k+1)\cdot\tau\right\rbrace = \lbrace g_\omega ^{\pm m} \rbrace_{m=0,\ldots, k}.$$
	In particular $A_\omega(k)$ has cardinality $2k + 1$. We define also the sets
	$$A_n(k)=\left\lbrace h_n \in \Gamma_n \text{ s.t. } d(x_n, h_n x_n) \leq \left(k+\frac{1}{2}\right)\cdot\tau\right\rbrace.$$
	Since we have a uniform bound on the systole and the action is torsion-free then $\#A_n(k) \leq \#A_\omega(k)$ $\omega$-a.s., for every fixed $k\in\mathbb{N}$.  We apply this property to $k_0=\frac{72\delta + 2D}{\tau}$. Clearly $g_n^{\pm m} \in A_n(k_0)$ for every $m=0,\ldots,k_0$, $\omega$-a.s. Therefore $A_n(k_0)=\lbrace g_n^{\pm m} \rbrace_{m=0,\ldots,k_0}$ $\omega$-a.s. By Lemma \ref{lemma-generating-set} we conclude that
	$$\Gamma_n = \langle A_n(k_0) \rangle = \langle g_n \rangle,$$
	i.e. $\Gamma_n$ is elementary $\omega$-a.s., which is a contradiction. 
\end{proof}

It is interesting to compare our convergence with the Gromov-Hausdorff convergence of the quotient spaces.
\begin{theo}
	Let $(X_n,x_n,\Gamma_n), (X_\infty, x_\infty, \Gamma_\infty) \in \mathcal{M}(\delta,D)$.
	\begin{itemize}
		\item[(i)] If $(X_n,x_n,\Gamma_n) \underset{\textup{eq-pGH}}{\longrightarrow} (X_\infty, x_\infty,\Gamma_\infty)$ then $(\Gamma_n\backslash X_n, \bar{x}_n) \underset{\textup{pGH}}{\longrightarrow} (\Gamma_\infty\backslash X_\infty, \bar{x}_\infty)$ and $\sup_{n\in\mathbb{N}}h_{\Gamma_n} < +\infty$;
		\item[(ii)] if $(\Gamma_n\backslash X_n, \bar{x}_n) \underset{\textup{pGH}}{\longrightarrow} (Y,y)$ and $\sup_{n\in\mathbb{N}}h_{\Gamma_n} < +\infty$ then there exists a subsequence $\lbrace n_k \rbrace$ such that $(X_{n_k},x_{n_k},\Gamma_{n_k}) \underset{\textup{eq-pGH}}{\longrightarrow} (X_\infty, x_\infty,\Gamma_\infty)$ and $(Y,y)$ is isometric to $(\Gamma_\infty\backslash X_\infty, \bar{x}_\infty)$.
	\end{itemize}  
\end{theo}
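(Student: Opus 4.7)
For part (i), the uniform upper bound on critical exponents is exactly Corollary \ref{cor-entropy-bound-convergence}. For the pointed Gromov-Hausdorff convergence of the quotients, let $(f_n,\varphi_n,\psi_n)$ be equivariant $\varepsilon$-approximations between $(X_n,x_n,\Gamma_n)$ and $(X_\infty, x_\infty, \Gamma_\infty)$. The plan is to construct maps $\bar{f}_n \colon \overline{B}(\bar{x}_n, 1/\varepsilon) \to \overline{B}(\bar{x}_\infty, 1/\varepsilon)$ yielding classical $O(\varepsilon)$-approximations of the quotient pointed spaces. Given $\bar{y}_n$ in the quotient ball, I would choose a representative $y_n \in X_n$ with $d(x_n, y_n)$ nearly equal to $\bar{d}(\bar{x}_n, \bar{y}_n)$ and set $\bar{f}_n(\bar{y}_n) := \overline{f_n(y_n)}$. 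Well-definedness up to error $O(\varepsilon)$ follows from the $\varphi_n$-equivariance: a second representative $g y_n$ has image $f_n(g y_n)$ which is $\varepsilon$-close to $\varphi_n(g) f_n(y_n)$, hence projects to the same point modulo $\Gamma_\infty$. For the almost-isometry property, choose representatives achieving the infimum defining $\bar d$; surjectivity onto an $O(\varepsilon)$-net of the target reduces via $\psi_n$ to the corresponding property of $f_n$.

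For part (ii), combining the hypothesis $\sup_n h_{\Gamma_n} \leq H < +\infty$ with Proposition \ref{prop-bound-systole} yields a uniform lower bound $\textup{sys}(\Gamma_n, X_n) \geq s > 0$. Fix a non-principal ultrafilter $\omega$ and consider the ultralimits $(X_\omega, x_\omega, \Gamma_\omega)$ and $(\bar{X}_\omega, \bar{x}_\omega)$; by the classical pGH analogue of Proposition \ref{prop-GH-ultralimit}.(i), $(\bar{X}_\omega, \bar{x}_\omega) \cong (Y, y)$ and is in particular proper. The crucial new step is establishing properness of $X_\omega$. For this, let $(y_\omega^k)$ be a sequence in $\overline{B}(x_\omega, R)$; the projections $\bar{y}_\omega^k = \pi_\omega(y_\omega^k) \in \bar{B}(\bar{x}_\omega, R)$ admit a convergent subsequence $\bar{y}_\omega^{k_j} \to \bar{y}_\omega^\infty$ by properness of $\bar{X}_\omega$. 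Pick any lift $y_\omega^\infty \in X_\omega$ of $\bar{y}_\omega^\infty$; since $\textup{sys}(\Gamma_\omega, X_\omega) \geq s$ the open ball $B(y_\omega^\infty, s/4)$ embeds isometrically into $\bar{X}_\omega$, and the preimage of $B(\bar{y}_\omega^\infty, s/4)$ intersected with $\overline{B}(x_\omega, R)$ is contained in the union of the balls $B(g y_\omega^\infty, s/4)$ for $g$ ranging over the finite set $\lbrace g \in \Gamma_\omega : d(x_\omega, g y_\omega^\infty) \leq R + s/4 \rbrace$. A pigeonhole argument then extracts a sub-subsequence of $(y_\omega^{k_j})$ converging in $X_\omega$, so $\overline{B}(x_\omega, R)$ is compact.

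With $X_\omega$ proper, Proposition \ref{prop-GH-ultralimit}.(ii) provides a subsequence $\lbrace n_k \rbrace$ along which $(X_{n_k},x_{n_k},\Gamma_{n_k}) \underset{\textup{eq-pGH}}{\longrightarrow} (X_\omega,x_\omega,\Gamma_\omega)$. Setting $(X_\infty, x_\infty, \Gamma_\infty) := (X_\omega,x_\omega,\Gamma_\omega)$, membership in $\mathcal{M}(\delta,D)$ is established by repeating verbatim the ultralimit arguments of the proof of Theorem \ref{theorem-main}.(i): $\delta$-hyperbolicity is preserved under ultralimits, discreteness and torsion-freeness come from Corollary \ref{cor-discrete-limit}, the identification $\Lambda(\Gamma_\omega) = \Psi(\omega\text{-}\lim \Lambda(\Gamma_n))$ of Proposition \ref{boundary-convergence} furnishes both $x_\omega \in \textup{QC-Hull}(\Lambda(\Gamma_\omega))$ and cocompactness of codiameter $\leq D$, and non-elementariness follows from the counting argument on the sets $A_n(k_0)$ combined with Lemma \ref{lemma-generating-set}. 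Finally, applying part (i) to the subsequence gives $(\bar{X}_{n_k}, \bar{x}_{n_k}) \underset{\textup{pGH}}{\longrightarrow} (\Gamma_\infty\backslash X_\infty, \bar{x}_\infty)$, and combining with the hypothesis $(\bar{X}_n, \bar{x}_n) \underset{\textup{pGH}}{\longrightarrow} (Y,y)$ via uniqueness of pGH-limits yields $(Y, y) \cong (\Gamma_\infty\backslash X_\infty, \bar{x}_\infty)$.

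The principal obstacle is the properness of $X_\omega$ in part (ii): since $\Gamma_n$ acts cocompactly only on the quasiconvex hull $Y_n$, and not on the full space $X_n$, direct uniform packing estimates for $X_n$ are not at hand, and one cannot invoke Lemma \ref{lemma-uniform-packing-convergence}, which already presupposes equivariant pGH convergence. The fiber-compactness argument above circumvents this by combining discreteness of $\Gamma_\omega$ (via the uniform systole bound) with compactness of the quotient ultralimit. A secondary, routine difficulty is keeping track of the additive error constants when converting equivariant approximations to quotient approximations in part (i).
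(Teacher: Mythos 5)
Your part (i) is correct but takes a genuinely different route from the paper: you build classical $O(\varepsilon)$-approximations between the quotient balls directly from the equivariant approximations $(f_n,\varphi_n,\psi_n)$, whereas the paper passes to ultralimits, constructs the limit projection $p_\omega\colon X_\omega\to \bar X_\omega$ of the maps $p_n\colon X_n\to\Gamma_n\backslash X_n$, shows the induced map $\bar p_\omega\colon \Gamma_\omega\backslash X_\omega\to\bar X_\omega$ is a bijective local isometry with continuous inverse (using the uniform systole bound), and concludes via Corollary \ref{cor-ultralimit-unique-limit}. Your approach is more elementary and self-contained for (i); the paper's buys the identification $\bar X_\omega\cong\Gamma_\omega\backslash X_\omega$ together with the map $p_\omega$, both of which are then reused verbatim in (ii). Note that your direct proof of (i) does not actually produce the map $\pi_\omega\colon X_\omega\to\bar X_\omega$ nor the identification of $\bar X_\omega$ with $\Gamma_\omega\backslash X_\omega$ that your part (ii) silently relies on, so you would still need to carry out that ultralimit construction.

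The genuine gap is in your properness argument for $X_\omega$ in part (ii), precisely at the step you single out as crucial. You invoke the \emph{finite} set $\lbrace g\in\Gamma_\omega : d(x_\omega, g y_\omega^\infty)\leq R+s/4\rbrace$ in order to pigeonhole, but finiteness of this set is not justified and cannot follow from the systole bound alone in a space not yet known to be proper: the orbit points $g y_\omega^\infty$ are pairwise $s$-separated, and an infinite $s$-separated set can perfectly well sit inside a bounded subset of a non-proper space (think of the orbit of a lattice of translations in an infinite-dimensional Hilbert space). In other words, the finiteness you assert is essentially equivalent to the local finiteness of the orbit that properness would deliver, so the argument is circular; and no uniform packing bound for balls in $X_n$ or $Y_n$ is available here, since Lemma \ref{lemma-uniform-packing-convergence} presupposes equivariant convergence of the total spaces. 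The paper avoids this entirely: once $\bar p_\omega$ is known to be an isometry onto $\bar X_\omega\cong Y$, the systole bound makes the quotient map $X_\omega\to\Gamma_\omega\backslash X_\omega$ a local isometry, hence $X_\omega$ is locally compact (being locally isometric to a proper space); since $X_\omega$ is also complete and geodesic, Hopf--Rinow gives properness. Replacing your fiber-counting step by this local-compactness-plus-Hopf--Rinow argument repairs the proof; the remainder of your part (ii) (uniform systole from Proposition \ref{prop-bound-systole}, Proposition \ref{prop-GH-ultralimit}.(ii), membership in $\mathcal{M}(\delta,D)$ via the arguments of Theorem \ref{theorem-main}.(i), and uniqueness of pGH limits) matches the paper.
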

\begin{proof}
	If $(X_n,x_n,\Gamma_n) \underset{\textup{eq-pGH}}{\longrightarrow} (X_\infty, x_\infty,\Gamma_\infty)$ then $\sup_{n\in\mathbb{N}}h_{\Gamma_n} < +\infty$ by Corollary \ref{cor-entropy-bound-convergence}. The second part of the first statement is true once we show that the ultralimit $(\bar X_\omega, \bar x_\omega)$ of the sequence $(\Gamma_n\backslash X_n =:\bar X_n, \bar{x}_n)$ is isometric to $(\Gamma_\infty\backslash X_\infty, \bar{x}_\infty)$ for every non-principal ultrafilter, by Corollary \ref{cor-ultralimit-unique-limit}. We fix a non-principal ultrafilter $\omega$. By Proposition \ref{prop-GH-ultralimit} the triple $(X_\omega, x_\omega, \Gamma_\omega)$ is equivariantly isometric to $(X_\infty, x_\infty, \Gamma_\infty)$.\\
	The projections $p_n\colon X_n \to  \bar X_n$ form an admissible sequence of $1$-Lipschitz maps and then, by Proposition A.5 of \cite{CavS20}, they yield a limit map $p_\omega\colon X_\omega \to  \bar X_\omega$ defined as $p_\omega ( y_\omega) = \omega$-$\lim p_n(   y_n)$, for $\omega$-$\lim  y_n =  y_\omega$. 
	The map $p_\omega$ is clearly surjective. It is also $\Gamma_\omega$-equivariant: indeed
	$$p_\omega (\gamma_\omega   y_\omega) = \omega\text{-}\lim p_n(\gamma_n y_n) = \omega\text{-}\lim p_n(y_n) = p_\omega(y_\omega)$$
	for every $g_\omega = \omega$-$\lim g_n \in \Gamma_\omega$ and $y_\omega = \omega$-$\lim y_n \in X_\omega$.
	Therefore we have a well defined, surjective quotient map $\bar{p}_\omega\colon \Gamma_\omega\backslash X_\omega \to \bar X_\omega$.
	The next step is to show it is a local isometry. We fix an arbitrary point $y_\omega = \omega\text{-}\lim y_n \in X_\omega$ and we  consider its class $[y_\omega]  \in \Gamma_\omega\backslash X_\omega$. By Proposition \ref{prop-bound-systole} there exists $s>0$ such that sys$(\Gamma_n, X_n) \geq s$ for every $n$. So, as in the proof of Corollary \ref{cor-discrete-limit}, the systole of $\Gamma_\omega$ is at least $s$. Therefore the quotient map $X_\omega \to \Gamma_\omega\backslash X_\omega$ is an isometry between $\overline{B}(y_\omega, \frac{s}{2})$ and $\overline{B}([y_\omega],\frac{s}{2})$. Moreover $\overline{B}(p_n(y_n), \frac{s}{2})$ is isometric to $\overline{B}(y_n,\frac{s}{2})$ for every $n$. By Lemma A.8 of \cite{CavS20} we know that  $\omega$-$\lim \overline{B}(p_n(y_n), \frac{s}{2})$ is isometric to $\overline{B}(p_\omega(y_\omega), \frac{s}{2})=\overline{B}(\bar p_\omega([y_\omega]), \frac{s}{2})$ and that  $\omega$-$\lim \overline{B}(y_n, \frac{s}{2})$ is isometric to $\overline{B}(y_\omega, \frac{s}{2})$.  Therefore $\overline{B}(\bar p_\omega([y_\omega]), \frac{s}{2})$ is isometric to $\overline{B}([y_\omega],\frac{s}{2})$, i.e. $\bar p_\omega$ is a local isometry.\\
	Now we prove $\bar{p}_\omega$ is injective. Let $[z_\omega], [y_\omega] \in \Gamma_\omega\backslash X_\omega$. Clearly
	$\bar{p}_\omega([z_\omega]) = \bar{p}_\omega([y_\omega])$ if and only if $p_\omega(z_\omega)=p_\omega(y_\omega)$. This means $\omega\text{-}\lim d(p_n (z_n), p_n(y_n)) = 0$ and, as the systole of $\Gamma_n$ is $\geq s>0$,  we have $\omega$-$\lim d(z_n, g_n y_n) = 0$ for some $g_n\in \Gamma_n$, $\omega$-a.s. The sequence $(g_n)$ is admissible, hence it defines an element $g_\omega = \omega$-$\lim g_n \in \Gamma_\omega$ satisfying $d(z_\omega,g_\omega y_\omega)=0$. This implies $[z_\omega] = [y_\omega]$.\\
	The map $\bar p_\omega\colon \Gamma_\omega \backslash X_\omega \to \bar X_\omega$ is a bijective local isometry between two length spaces. If its inverse is continuous then it is an isometry. We take points $\bar y_\omega^k = \omega$-$\lim \bar y_n^k \in \bar X_\omega$ converging to $\bar y_\omega^\infty = \omega$-$\lim \bar y_n^\infty \in \bar X_\omega$ as $k \to +\infty$. We have $\bar y_n^k = p_n(y_n^k), \bar y_n^\infty = p_n(y_n^\infty)$ for some $y_n^k,y_n^\infty \in X_n$. We can suppose that $y_n^k,y_n^\infty$ belong to a fixed ball around $x_n$. We consider the points $y_\omega^k = \omega$-$\lim y_n^k$ and $y_\omega^\infty = \omega$-$\lim y_n^\infty$ of $X_\omega$ and their images $[y_\omega^k], [y_\omega^\infty] \in \Gamma_\omega \backslash X_\omega$. It is straightforward to show that $\bar p_\omega ([y_\omega^k]) = \bar y_\omega^k$ and $\bar p_\omega ([y_\omega^\infty]) = \bar y_\omega^\infty$. Now it is not difficult to check that the sequence $[y_\omega^k]$ converges to $[y_\omega^\infty]$ when $k \to +\infty$, proving that the inverse of $\bar{p}_\omega$ is continuous. \\
	Therefore $(\bar X_\omega, \bar{x}_\omega)$ is isometric to $(\Gamma_\omega \backslash X_\omega, p_\omega{x_\omega})$ which is clearly isometric to $(\Gamma_\infty \backslash X_\infty, \bar{x}_\infty)$. The proof of (i) is then finished since this is true for every non-principal ultrafilter $\omega$.
	\vspace{2mm}
	
	Suppose now that $(\Gamma_n\backslash X_n, \bar{x}_n) \underset{\textup{pGH}}{\longrightarrow} (Y,y)$ and $\sup_{n\in\mathbb{N}}h_{\Gamma_n} < +\infty$. Again by Proposition \ref{prop-bound-systole} there exists $s>0$ such that sys$(\Gamma_n, X_n) \geq s$ for every $n$. We fix a non-principal ultrafilter $\omega$ and we consider the ultralimit triple $(X_\omega,x_\omega,\Gamma_\omega)$. As usual we get sys$(\Gamma_\omega,X_\omega) \geq s$. We can apply the same argument above to show that $\Gamma_\omega\backslash X_\omega$ is isometric to $Y$. Moreover by the condition on the systole of $\Gamma_\omega$ we know that $X_\omega$ is locally isometric to $\Gamma_\omega\backslash X_\omega$.	
	Since $Y$ is compact we conclude that $X_\omega$ is a geodesic, complete, locally compact metric space. Therefore it is proper by Hopf-Rinow's Theorem, see Corollary I.3.8 of \cite{BH09}. Then there exists a subsequence $\lbrace n_k \rbrace$ such that $(X_{n_k},x_{n_k},\Gamma_{n_k}) \underset{\textup{eq-pGH}}{\longrightarrow} (X_\omega, x_\omega,\Gamma_\omega)$ by Proposition \ref{prop-GH-ultralimit}.
\end{proof}

\section{Continuity of the critical exponent}

Let $\Gamma$ be a discrete, quasiconvex-cocompact group of isometries of a proper, $\delta$-hyperbolic metric space $X$. Then it is proved in \cite{Coo93} that the Patterson-Sullivan measure on $\Lambda(\Gamma)$ is $(A,h_\Gamma)$-Ahlfors regular for some $A> 0$. Our goal is to quantify the constant $A$.
%We introduce a notation: two functions $f,g\colon [0,+\infty) \to \mathbb{R}$ \emph{have the same asymptotic behaviour} if for all $\varepsilon > 0$ there exists $T_\varepsilon \geq 0$ such that if $T\geq T_\varepsilon$ then $\vert f(T) -  g(T) \vert \leq \varepsilon$. In this case we write $f\asymp g$. The function $T_\varepsilon$ is called the \emph{threshold function}.
%Moreover we write $f\underset{P_0,r_0,\delta,\ldots}{\asymp} g$ meaning that the threshold function can be expressed only in terms of $\varepsilon$ and $P_0,r_0,\delta,\ldots$
%For instance if $g$ is constantly equal to $g_0 \in \mathbb{R}$ and $f \underset{P_0,r_0,\delta,\ldots}{\asymp} g_0$ then the function $f$ tends to $g_0$ when $T$ goes to $+\infty$ and moreover the rate of convergence of the limit can be expressed only in terms of $P_0,r_0,\delta,\ldots$
\begin{theo}
	\label{cocompactness}
	Let $\delta,D,H\geq 0$. There exists $A=A(\delta,D,H) \geq 1$ such that for all $(X,x,\Gamma)\in \mathcal{M}(\delta,D)$ with $h_\Gamma \leq H$ the subset $\Lambda(\Gamma)$ is visually $(A,h_\Gamma)$-Ahlfors regular with respect to every Patterson-Sullivan measure.
%		\item[(ii)] it holds
%		$$\frac{1}{T}\log\textup{Cov}(\Lambda(\Gamma), e^{-T}) \underset{h^-,h^+,\delta,D}{\asymp} h_\Gamma;$$
%		\item[(iii)] $\overline{\textup{MD}}(\Lambda(\Gamma)) = \underline{\textup{MD}}(\Lambda(\Gamma)) = h_\Gamma$.		
%	\end{itemize}
\end{theo}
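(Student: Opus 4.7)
The plan is to deduce the uniform Ahlfors regularity from a quantified form of Sullivan's Shadow Lemma, using the $\Gamma$-quasiconformal density property of the Patterson--Sullivan measure (Proposition \ref{patterson-sullivan}) whose constant $Q$ depends only on $\delta$ and $H$. The first step is a uniform shadow lemma: there exist $r_0 = r_0(\delta, D)$ and $A' = A'(\delta, D, H) \geq 1$ such that for every $(X, x, \Gamma) \in \mathcal{M}(\delta, D)$ with $h_\Gamma \leq H$, every Patterson--Sullivan measure $\mu_{\text{PS}}$, and every $g \in \Gamma$,
\begin{equation*}
\frac{1}{A'}\, e^{-h_\Gamma d(x, gx)} \;\leq\; \mu_{\text{PS}}\bigl(\text{Shad}_x(gx, r_0)\bigr) \;\leq\; A'\, e^{-h_\Gamma d(x, gx)}.
\end{equation*}
For $z \in \text{Shad}_x(gx, r_0)$, the geodesic ray $[x,z]$ enters $B(gx, r_0)$, so $B_z(x, gx) = -d(x, gx) + O(r_0 + \delta)$. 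Combining this with the quasiconformality and the identity $\text{Shad}_{g^{-1}x}(x, r_0) = g^{-1}\,\text{Shad}_x(gx, r_0)$ yields
\begin{equation*}
\mu_{\text{PS}}\bigl(\text{Shad}_x(gx, r_0)\bigr) \;\asymp\; e^{-h_\Gamma d(x,gx)}\cdot \mu_{\text{PS}}\bigl(\text{Shad}_{g^{-1}x}(x, r_0)\bigr),
\end{equation*}
reducing the problem to controlling the mass of shadows cast from hull points by the ball $B(x, r_0)$. The upper bound is automatic since $\mu_{\text{PS}}$ is a probability measure; the uniform positive lower bound is the delicate point, addressed below.

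With the shadow lemma available, the theorem follows by sandwiching a generalized visual ball between two shadows of orbit points via Lemma \ref{shadow-ball}. For the upper bound at $\rho = e^{-T}$, Lemma \ref{shadow-ball}.(i) gives $B(z, e^{-T}) \subseteq \text{Shad}_x(\xi_{x,z}(T), 7\delta)$; by Lemma \ref{approximation-ray-line} the point $\xi_{x,z}(T)$ lies within $14\delta$ of a hull point, and cocompactness with codiameter $\leq D$ yields $g \in \Gamma$ with $d(gx, \xi_{x,z}(T)) \leq 14\delta + D$, whence $B(z, e^{-T}) \subseteq \text{Shad}_x(gx, 21\delta + D)$. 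After possibly enlarging $r_0$ to $\max\{r_0, 21\delta + D\}$, the shadow lemma together with $d(x, gx) \geq T - 14\delta - D$ yields $\mu_{\text{PS}}(B(z, e^{-T})) \leq A\, e^{-h_\Gamma T}$, with $A$ absorbing the factor $e^{H(14\delta + D)}$. For the lower bound, I would pick $T' := T + 7\delta + 14\delta + D + r_0$ and some $g' \in \Gamma$ with $d(g'x, \xi_{x,z}(T')) \leq 14\delta + D$; then Lemma \ref{shadow-ball}.(ii) gives
\begin{equation*}
\text{Shad}_x(g'x, r_0) \;\subseteq\; \text{Shad}_x(\xi_{x,z}(T'), r_0 + 14\delta + D) \;\subseteq\; B(z, e^{-T}),
\end{equation*}
and the shadow lemma delivers $\mu_{\text{PS}}(B(z, e^{-T})) \geq (1/A')\, e^{-h_\Gamma(T' + 14\delta + D)} \geq (1/A)\, e^{-h_\Gamma T}$ for a suitable $A$.

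The main obstacle is the uniform positive lower bound $\mu_{\text{PS}}(\text{Shad}_y(x, r_0)) \geq \epsilon_0(\delta, D, H)$ for every hull point $y$ needed to close the shadow lemma. Coornaert's classical argument uses compactness of $\Gamma \backslash \text{QC-Hull}(\Lambda(\Gamma))$ and produces a constant that a priori depends on $\Gamma$. To obtain uniformity I would quantify the covering: by cocompactness and Lemma \ref{approximation-ray-line}, $\Lambda(\Gamma)$ is covered by a family of shadows $\{\text{Shad}_y(g_i y, r_0)\}_{i \in I}$ whose cardinality $\#I$ is bounded above in terms of the uniform packing estimate of Proposition \ref{prop-critical-exponent-covering-entropy} (equivalently Lemma \ref{lemma-bound-entropy-packing}), hence in terms of $\delta, D, H$. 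Since $\mu_{\text{PS}}(\Lambda(\Gamma)) = 1$, at least one shadow in this finite family has measure at least $1/\#I$, and transferring this mass via quasiconformality and the already-established scaling identity to $\text{Shad}_y(x, r_0)$ itself produces the desired $\epsilon_0$ and closes the argument.
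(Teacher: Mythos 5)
Your overall architecture --- a uniform Sullivan shadow lemma with constants depending only on $\delta,D,H$, followed by sandwiching generalized visual balls between shadows of orbit points via Lemma \ref{shadow-ball} and Lemma \ref{approximation-ray-line} --- is the same as the paper's, and your upper bounds are sound: the shadow upper bound from quasiconformality plus $\mu_{\textup{PS}}(\textup{Shad}_{g^{-1}x}(x,r_0))\leq 1$, and the resulting upper bound on $\mu_{\textup{PS}}(B(z,\rho))$, both come with constants controlled by $\delta,D,H$. (The paper gets the ball upper bound instead by a direct estimate on the partial Poincar\'e series over $\Gamma x\cap\tilde B(z,\rho)$; the difference is immaterial.) The sandwiching in your second paragraph is also correct modulo the shadow lemma.

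The gap is in the lower bound of the shadow lemma. What you need is $\mu_{\textup{PS}}(\textup{Shad}_{g^{-1}x}(x,r_0))\geq\epsilon_0$ for \emph{every} $g\in\Gamma$, i.e.\ for shadows of the fixed ball $B(x,r_0)$ seen from viewpoints $g^{-1}x$ ranging over the whole unbounded orbit. Your covering-plus-pigeonhole argument produces a family $\lbrace\textup{Shad}_y(g_iy,r_0)\rbrace_{i\in I}$ of boundedly many shadows cast \emph{from} $y$ by balls around orbit points \emph{near} $y$, and concludes that \emph{one} of them has mass at least $1/\#I$. Quasiconformality then controls $\mu_{\textup{PS}}(\textup{Shad}_{g_{i_0}^{-1}y}(y,r_0))$ for that single, bounded element $g_{i_0}$ --- but this is not the set you need: there is no group element carrying the lucky shadow $\textup{Shad}_y(g_{i_0}y,r_0)$ to $\textup{Shad}_y(x,r_0)$ when $d(x,y)$ is large, so the ``transfer'' step has nothing to act on, and the pigeonhole in any case yields a bound for only one member of the family rather than for all viewpoints. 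The standard way to close this (and the paper's Step 2) is different: if $z,w\in\partial X\setminus g\bigl(\textup{Shad}_x(g^{-1}x,R)\bigr)$ then there are rays from $gx$ to $z$ and to $w$ missing $B(x,R)$, whence $(z,gx)_x\geq R-4\delta$ and $(w,gx)_x\geq R-4\delta$ by Lemma \ref{projection}, so $(z,w)_x\geq R-5\delta$ and the whole complement lies in a single generalized visual ball of radius $e^{-(R-5\delta)}$. Feeding this into the ball upper bound you have already established --- together with the lower bound on $h_\Gamma$ from Proposition \ref{prop-lower-bound-entropy}, which makes the threshold $R_0$ depend only on $\delta,D,H$ --- gives mass at most $\frac{1}{2}$ for the complement, hence at least $\frac{1}{2}$ for $\textup{Shad}_{g^{-1}x}(x,R)$ uniformly in $g$, and quasiconformality finishes as you intended. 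With that substitution your proof goes through; as written, the lower bound does not.
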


\begin{proof}
	We divide the proof in steps.\\
	{\textbf{Step 1:} \em{$\forall z\in \partial X$ and $\forall\rho>0$ it holds $\mu_{\textup{PS}}(B(z,\rho))\leq e^{h_\Gamma(55\delta + 3D)}\rho^{h_\Gamma}$.}}\vspace{1mm} \\
	We suppose first $z\in \Lambda(\Gamma)$ and we take the set \vspace{-3mm}
	$$\tilde{B}(z,\rho)= \left\lbrace y \in X \cup \partial X \text{ s.t. } (y,z)_{x} > \log\frac{1}{\rho} \right\rbrace.\vspace{-3mm}$$
	It is open (cp. Observation 4.5.2 of \cite{DSU17}) and $\tilde{B}(z,\rho)\cap \partial X = B(z,\rho)$, so $\mu_{\text{PS}}(\tilde{B}(z,\rho)) = \mu_{\text{PS}}(B(z,\rho))$ since $\mu_{\text{PS}}$ is supported on $\Lambda(\Gamma)\subseteq \partial X$.
	Let $T=\log\frac{1}{\rho}$ and $\xi_{x,z}$ be a geodesic ray $[x,z]$. For every $y\in\Gamma x \cap \tilde{B}(z,\rho)$ we have
	\begin{equation}
		\label{ppp}
		d(x,y)\geq T - \delta \,\,\,\text{ and } \,\,\, d(x,y) \geq d(x,\xi_{x,z}(T)) + d(\xi_{x,z}(T), y) - 12\delta.
	\end{equation}
	The first inequality is given by Remark \ref{rmk-product-finite}. Let $\gamma$ be any geodesic segment $[x,y]$. Again by Remark \ref{rmk-product-finite} we have $d(\xi_{x,z}(T), \gamma(T)) \leq 6\delta$, therefore
	$$d(x,y) = d(x,\gamma(T)) + d(\gamma(T), y) \geq d(x,\xi_{x,z}(T)) + d(\xi_{x,z}(T), y) -12\delta.$$
	% it holds
	%	\begin{equation*}
	%		\begin{aligned}
	%			\log\frac{1}{\rho}&<(gx_0, z)_{x_0} \leq \lim_{T \to + \infty} \frac{1}{2} \big[ d(x_0,gx_0) + d(x_0,\gamma_z(T)) - d(gx_0,\gamma_z(T)) \big] + \delta \\
	%			&\leq \lim_{T \to + \infty} \frac{1}{2} \big[ d(x_0,gx_0) + T - (T - d(x_0,gx_0)) \big] + \delta = d(x_0,gx_0) + \delta.
	%		\end{aligned}
	%	\end{equation*}
	%	In order to prove the second inequality we extend the $\sigma$-geodesic segment $[x,y]$ to a $\sigma$-geodesic ray $\xi_w$ with $\xi_w^+ = w$. By the analogue of \eqref{hyperbolicity-boundary} we have
	%	$(w,z)_{x} \geq \min\lbrace (y, z)_{x}, (y, w)_{x} \rbrace - \delta > T - 2\delta,$ where the last inequality follows from $d(x,y)> T - \delta$.
	%	By Lemma \ref{product-rays} we have $d(\xi_z(T-3\delta), \xi_w(T-3\delta)) \leq 4\delta$ and applying the triangular inequality we get $d(y,z_T)\leq 10\delta$ and the second estimate in \eqref{ppp}.\\
	Moreover we have $d(\xi_{x,z}(T), \text{QC-Hull}(\Lambda(\Gamma))) \leq 14\delta$ by Lemma \ref{approximation-ray-line}, since $x\in \text{QC-Hull}(\Lambda(\Gamma))$. 
	By the cocompactness of the action on $\text{QC-Hull}(\Lambda(\Gamma))$ we can find a point $x_1 \in \Gamma x$ such that $d(\xi_{x,z}(T), x_1)\leq 14\delta+D$. This implies 
	$$d(x,y) \geq d(x,x_1) + d(x_1, y) - 40\delta - 2D$$
	for every $y\in \Gamma x \cap \tilde{B}(z,\rho)$.
	Therefore
	\begin{equation*}
		\begin{aligned}
			\sum_{y \in \Gamma x \cap \tilde{B}(z,\rho)} e^{-s d(x,y)} &\leq \sum_{y \in \Gamma x \cap \tilde{B}(z,\rho)} e^{-s (d(x, x_1) + d(x_1,y) - 40 \delta - 2D)} \\
			&= e^{s(40\delta + 2D)}e^{-sd(x,x_1)}\cdot \sum_{y \in \Gamma x \cap \tilde{B}(z,\rho)}e^{-sd(x_1,y)} \\
			&\leq e^{s(54\delta + 3D)}e^{-sd(x,\xi_{xz}(T))}\cdot  \sum_{g \in \Gamma}e^{-sd(x_1,g x_1)} \\
			&= e^{s(54\delta + 3D)}\cdot\rho^s\cdot \sum_{g \in \Gamma}e^{-sd(x,g x)}.
		\end{aligned}
	\end{equation*}
	In other words we have $\mu_s(\tilde{B}(z,\rho)) \leq e^{s(54\delta + 3D)}\rho^s$, and by $\ast$-weak convergence we conclude that
	\vspace{-2mm}
	\begin{equation*}
		\begin{aligned}
			\mu_{\text{PS}}(B(z,\rho)) = \mu_{\text{PS}}(\tilde{B}(z,\rho)) \leq \liminf_{i\to +\infty} \mu_{s_i}(\tilde{B}(z,\rho)) \leq e^{h_\Gamma(54\delta + 3D)}\rho^{h_\Gamma}.
		\end{aligned}
		\vspace{-2mm}
	\end{equation*}
	If $z\in \partial X$ we observe that if $B(z,\rho )\cap \Lambda(\Gamma)=\emptyset$ then the thesis is obviously true since $\mu_{\text{PS}}$ is supported on $\Lambda(\Gamma)$. Otherwise there exists $w\in \Lambda(\Gamma)$ such that $(z,w)_{x} > \log\frac{1}{\rho}$. It is straightforward to check that $B(w,\rho) \subseteq B(z,\rho e^\delta)$ by \eqref{hyperbolicity-boundary}. Then $\mu_{\text{PS}}(B(z,\rho))\leq e^{h_\Gamma(55\delta + 3D)}\rho^{h_\Gamma}$. \vspace{4mm}
	\\
	{\textbf{Step 2:} \em{for every $R\geq R_0 :=\frac{\log 2}{h_\Gamma} + 55\delta + 3D + 5\delta$ and for every $g \in \Gamma$ it holds
			$\mu_{\textup{PS}}(\textup{Shad}_{x}(g x, R)) \geq \frac{1}{2Q}e^{-h_\Gamma d(x,g x)},$ where $Q$ is the constant of Proposition \ref{patterson-sullivan} that depends only on $\delta$ and $H$.}} \vspace{1mm}\\
	From the first step we know that for every $\rho \leq \rho_0 := 2^{-\frac{1}{h_\Gamma}}e^{-(55\delta + 3D)}$ and for every $z\in \partial X$ it holds $\mu_{\text{PS}}(B(z,\rho))\leq \frac{1}{2}.$ A direct computation shows that $R_0 = \log \frac{1}{\rho_0} + 5\delta$. We claim that for every $R\geq R_0$ and every $g \in \Gamma$ the set
	$\partial X \setminus g(\text{Shad}_{x}(g^{-1}x,R))$ is contained in a generalized visual ball of radius at most $\rho_0$. Indeed if $z,w \in \partial X \setminus g(\text{Shad}_{x}(g^{-1}x,R))$ then there are geodesic rays $\xi =[g x, z], \xi'=[g x, w]$ that do not intersect the ball $B(x,R)$. Therefore we get $(\xi(T),g x)_{x} \geq d(x, [g x, \xi(T)]) - 4\delta \geq R - 4\delta$ by Lemma \ref{projection}, so $(z,g x)_{x} \geq \liminf_{T\to +\infty} (\xi(T),g x)_{x} \geq R - 4\delta$. The same holds for $w$. Thus by \eqref{hyperbolicity-boundary} we get $(z,w)_{x} \geq R - 5\delta,$ proving the claim.
	%	Hence if $R$ is $\geq R_0 = \log \frac{1}{\rho_0} + 3\delta$ then $\partial X \setminus g(\text{Shad}(g^{-1}x_0,R))$ is contained in a ball of radius $\rho_0$ for any $g \in \Gamma$.\\
	By Proposition \ref{patterson-sullivan} we get
	$$\frac{\mu_{\text{PS}}(\text{Shad}_{x}(g x, R))}{\mu_{\text{PS}}(g^{-1}(\text{Shad}_{x}(g x, R)))} \geq \frac{1}{Q}e^{-h_\Gamma (B_z(x,x) - B_z(x,g^{-1} x))}.$$
	Since $R\geq R_0$ the measure of $g^{-1}(\text{Shad}_{x}(g x, R))$ is at least $\frac{1}{2}$. Moreover the Busemann function is $1$-Lipschitz, so
	$$\mu_{\text{PS}}(\text{Shad}_{x}(g x, R)) \geq \frac{1}{2Q}e^{-h_\Gamma d(x,g^{-1}x)} = \frac{1}{2Q}e^{-h_\Gamma d(x,gx)}.$$
	{\textbf{Step 3.} \em{$\mu_{\textup{PS}}(B(z,\rho))\geq \frac{1}{2Q}e^{-h_\Gamma(R_0+28\delta+2D)}\rho^{h_\Gamma}$ for every $z\in \Lambda(\Gamma)$ and every $\rho>0$.}}\vspace{1mm}\\
	For every $\rho > 0$ we set $T=\log\frac{1}{\rho}$. If $z\in \partial X$ and $R\geq 0$ then by Lemma \ref{shadow-ball} we get Shad$_{x}(\xi_{x,z}(T+R), R) \subseteq B(z,e^{-T})$. 
	We take $R=R_0 + 14\delta + D$, where $R_0$ is the constant of the second step and we conclude that Shad$_{x}(\xi_{x,z}(T+R), R)$ is contained in $B(z,\rho)$.
	Applying again Lemma \ref{approximation-ray-line} and the cocompactness of the action we can find $g \in \Gamma$ such that $d(gx, \xi_{x,z}(T+R))\leq 14\delta + D$, implying Shad$_{x}(gx,R_0)\subseteq \text{Shad}_{x}(\xi_{x,z}(T+R), R) \subseteq B(z,\rho)$.  
	From the second step we obtain $\mu_{\text{PS}}(B(z,\rho)) \geq \frac{1}{2Q}e^{-h_\Gamma d(x,g x)}.$
	Furthermore $d(x, g x) \leq T + R_0 + 28\delta + 2D,$
	so finally
	$$\mu_{\text{PS}}(B(z,\rho))\geq \frac{1}{2Q}e^{-h_\Gamma(R_0+28\delta +2D)}\rho^{h_\Gamma}.$$	
	The explicit description of the constants shows as they depend only on $\delta,H,D$ and on a lower bound on $h_\Gamma$, which is given in terms of $\delta$ and $D$ by Proposition \ref{prop-lower-bound-entropy}.
\end{proof}

As a consequence, applying Corollary \ref{cor-entropy-bound-convergence}, we have
\begin{cor}
	In the standard setting of convergence there exists some $A>0$ such that every visual boundary $\partial X_n$ is visually $(A,h_{\Gamma_n})$-Ahlfors-regular with respect to any Patterson-Sullivan measure.
\end{cor}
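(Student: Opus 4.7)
The corollary is an immediate combination of the two preceding results, so the proposal is essentially a bookkeeping argument. First I would invoke Corollary \ref{cor-entropy-bound-convergence}, which guarantees that in the standard setting of convergence there is a finite constant
$$H := \sup_{n\in\mathbb{N}} h_{\Gamma_n} < +\infty.$$
Since by hypothesis every triple $(X_n, x_n, \Gamma_n)$ belongs to $\mathcal{M}(\delta, D)$ and its critical exponent is bounded above by $H$, each such triple satisfies the assumptions of Theorem \ref{cocompactness} with the \emph{same} parameters $\delta, D, H$.

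Then I would apply Theorem \ref{cocompactness} to each $(X_n,x_n,\Gamma_n)$. This produces a single constant $A = A(\delta, D, H) \geq 1$, depending only on $\delta, D, H$ and \emph{not} on $n$, such that for every $n$ the set $\Lambda(\Gamma_n)$ (which is where any Patterson-Sullivan measure of $\Gamma_n$ is supported, by Proposition \ref{patterson-sullivan}) is visually $(A, h_{\Gamma_n})$-Ahlfors regular with respect to every Patterson-Sullivan measure. Because the Patterson-Sullivan measure vanishes off $\Lambda(\Gamma_n)$, upper bounds on $\mu_{\textup{PS}}(B(z,\rho))$ for $z \in \partial X_n$ reduce to those given by Step~1 of the proof of Theorem \ref{cocompactness}, while lower bounds only need to be checked on the support; thus the same constant $A$ works when the statement is phrased on $\partial X_n$ instead of $\Lambda(\Gamma_n)$.

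There is no genuine obstacle here: the whole point is to observe that the quantification in Theorem \ref{cocompactness} depends only on $(\delta, D, H)$ and that Corollary \ref{cor-entropy-bound-convergence} provides such an $H$ uniformly in $n$. The only mildly subtle point is making sure the Ahlfors constant is truly uniform; this is guaranteed by the explicit description of the constants at the end of the proof of Theorem \ref{cocompactness}, which shows that they depend only on $\delta$, $D$, $H$ and a lower bound on $h_{\Gamma_n}$, the latter being provided uniformly by Proposition \ref{prop-lower-bound-entropy} in terms of $\delta$ and $D$ alone.
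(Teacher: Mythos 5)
Your proposal is correct and matches the paper's (essentially one-line) argument: the paper likewise deduces the corollary by combining the uniform bound $H = \sup_n h_{\Gamma_n} < +\infty$ from Corollary \ref{cor-entropy-bound-convergence} with the quantification $A = A(\delta,D,H)$ of Theorem \ref{cocompactness}. Your remark on passing from $\Lambda(\Gamma_n)$ to $\partial X_n$ via the support of the Patterson--Sullivan measure is a reasonable reading of the statement and is consistent with Step 1 of the proof of Theorem \ref{cocompactness}, which already treats arbitrary $z\in\partial X$ for the upper bound.
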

With this result it is possible to show the continuity of the critical exponent under the standard setting of convergence. However we prefer to use the equidistribution of the orbits, following again the ideas of \cite{Coo93}.

%We remark that for this theorem we need a control of the packing function, so the GCB-assumption seems necessary. We also need:
%\begin{prop}[Example 5.8 of \cite{CavS20bis}]
%	\label{systole-qc-cocompact}
%	Let $(X,\sigma)$ be a $\delta$-hyperbolic \textup{GCB}-space that is $P_0$-packed at scale $r_0$. Let $\Gamma$ be a discrete, non-elementary, quasiconvex-cocompact group of $\sigma$-isometries of $X$ with codiameter $\leq D$. Then there exists $s_0 = s_0(P_0,r_0,\delta,D) > 0$ depending only on $P_0,r_0,\delta$ and $D$ such that for all $x\in X$ and all $g\in \Gamma$, if $d(x,gx)\leq s_0$ then $x=gx$.
%\end{prop}

\begin{proof}[Proof of Theorem \ref{theo-uniform-distribution}]
	By Corollary \ref{cor-entropy-bound-convergence} we have $\sup_{n\in\mathbb{N}}h_{\Gamma_n} =: H < + \infty$. By Proposition \ref{prop-bound-systole} there exists $s > 0$ such that $\text{sys}(\Gamma_n,X_n) \geq s$ for every $n$. 
	Let $R_0 = R_0(\delta, D, H)$ be the number of Step 2 of Theorem \ref{cocompactness} and $Q$ be the constant of Proposition \ref{patterson-sullivan}. 
	By Lemma \ref{lemma-uniform-packing-convergence} we have $$\sup_{n\in\mathbb{N}}\text{Pack}_{Y_n}\left(4R_0+1,\frac{s}{2}\right) =: N <+\infty.$$
	We fix $n \in \mathbb{N}$. It is easy to check that if $[x_n,z_n]\cap B(y_n,R_0) \neq \emptyset$ and $[x_n,z_n]\cap B(y'_n,R_0) \neq \emptyset$, where $z_n\in \partial X$ and $y_n,y'_n$ are points of $X_n$ with $\vert d(x_n,y_n) - d(x_n,y'_n)\vert \leq 1$, then $d(y_n,y'_n)\leq 4R_0 + 1$. Thus for every $j\in \mathbb{N}$ we have $\#\lbrace y_n\in \Gamma_n x_n \text{ s.t. } y_n\in A(x_n,j,j+1) \text{ and } z_n\in \textup{Shad}_{x_n}(y_n,R_0) \rbrace \leq N.$\vspace{1mm}\\
	{\noindent\textbf{Step 1. }{\em For all $k\in \mathbb{N}$ it holds
			$\#\Gamma_n x_n \cap \overline{B}(x_n,k) \leq 4QNe^{h_{\Gamma_n} k}.$ 
	}}\vspace{1mm}\\
	Let $A_{n,j} = \Gamma_n x_n \cap A(x_n,j,j+1)$. By the observation made before we conclude that among the set of shadows $\lbrace \text{Shad}_{x_n}(y_n,R_0) \rbrace_{y_n\in A_j}$ there are at least $\frac{\# A_j}{N}$ disjoint sets. Thus
	$$1 \geq \mu_{\text{PS}}\left( \bigcup_{y_n\in A_{n,j}}\text{Shad}_{x_n}(y_n,R_0) \right) \geq \frac{\# A_{n,j}}{N}\cdot \frac{1}{2Q}e^{-h_{\Gamma_n}(j+1)},$$
	where we used Step 2 of Theorem \ref{cocompactness}. This implies $\#A_{n,j} \leq 2QNe^{h_{\Gamma_n}(j+1)}$ for every $j\in \mathbb{N}$. Finally we have
	$$\#\Gamma_n x_n \cap \overline{B}(x_n,k) \leq \sum_{j=0}^{k-1} \#A_{n,j} \leq 4QNe^{h_{\Gamma_n} k}.$$
	{\noindent\textbf{Step 2. }{\em For all $T\geq 0$ it holds
			$\#\Gamma_n x_n \cap \overline{B}(x_n,T) \geq e^{-h_{\Gamma_n}(84\delta + 5D + 1)}e^{h_{\Gamma_n} T}.$ 
	}}\vspace{1mm}\\
	We fix $z_1^n,\ldots,z_{K_n}^n \in \Lambda(\Gamma_n)$ realizing Pack$^*(\Lambda(\Gamma_n), e^{-T + 28\delta + 2D + 1})$: in particular $(z_i^n,z_j^n)_{x_n} \leq T - 28\delta - 2D - 1$ for all $1\leq i \neq j \leq K_n$. By Lemma \ref{product-rays} we deduce that $d(\xi_{x_n,z_i^n}(T-14\delta -D), \xi_{x_n,z_j^n}(T-14\delta -D)) \geq 28\delta + 2D + 1 > 28\delta + 2D$. Moreover for every $1\leq i \leq K_n$ we can find a point $y_i^n\in \Gamma x$ such that $d(\xi_{x_n,z_i^n}(T-14\delta -D), y_i^n) \leq 14\delta + D$ by Lemma \ref{approximation-ray-line}. Therefore we have $d(x_n,y_i^n)\leq T$ and $d(y_i^n,y_j^n) > 0$ for every $1\leq i \neq j \leq K_n$. So
	\begin{equation*}
		\begin{aligned}
			\#\Gamma_n x_n \cap \overline{B}(x_n,T) &\geq \text{Pack}^*(\Lambda(\Gamma_n), e^{-T+28\delta + 2D + 1}) \\
			&\geq \text{Cov}(\Lambda(\Gamma_n), e^{-T+29\delta + 2D + 1}) \\
			&\geq e^{-h_{\Gamma_n}(84\delta + 5D + 1)}e^{h_{\Gamma_n} T}.
		\end{aligned}
	\end{equation*}
	The first inequality follows from the discussion above, while the second one is Lemma \ref{packing*}. The last inequality follows by Step 1 of Theorem \ref{cocompactness}. Indeed we get
	\begin{equation*}
		\begin{aligned}
			\text{Cov}(\Lambda(\Gamma_n), e^{-T + 29\delta + 2D + 1}) &\geq e^{-h_{\Gamma_n}(55\delta + 3D)}e^{-h_{\Gamma_n}(-T + 29\delta + 2D + 1)} \\ &=e^{-h_{\Gamma_n}(84\delta + 5D + 1)}e^{h_{\Gamma_n} T}.
		\end{aligned}
	\end{equation*}
	%	For every $z\in \Lambda(\Gamma)$ we consider a geodesic ray $\xi_{xz}$. Arguing as usual using Lemma \ref{approximation-ray-line} and the cocompactness of the action we conclude that for every $t\geq 0$ there exists a point $y_t \in \Gamma x$ such that $d(\xi_{xz}(t),y_t)\leq 14\delta + D$. This implies that $z\in \text{Shad}_x(y_t, 14\delta + D)$ and $\vert d(x,y_t) - t \vert \leq 14\delta + D$. Therefore for every $t\geq 0$ we can cover $\Lambda(\Gamma)$ with shadows casted by points of $\Gamma x$ at distance between $t-14\delta - D$ and $t+14\delta + D$ from $x$ and with radius $14\delta + D$. Choosing $t=k-14\delta - D$ we get 
	%	$$\Lambda(\Gamma)\subseteq \bigcup_{y\in \Gamma x \cap A(x,k-28\delta - 2D,k)}\text{Shad}_x(y,14\delta + D).$$
	%	By Lemma \ref{shadow-ball} we have $\text{Shad}_x(y,D+L) \subseteq B(z_y, e^{-d(x,y) + D + L}) \subseteq B(z_y, e^{-k + 3D + 3L})$ for every $y\in \Gamma x \cap A(x,k-2D-2L,k)$, where $z_y$ is the point at infinity of a $\sigma$-extension of the $\sigma$-geodesic segment $[x,y]$. So by Step 1 of Theorem \ref{cocompactness} we conclude
	%	$$1=\mu_{\text{PS}}(\Lambda(\Gamma)) \leq \#\Gamma x \cap \overline{B}(x,k) \cdot  e^{h_\Gamma(21\delta + 6D + 6L)}e^{-h_\Gamma k}.$$
	The thesis follows by the bounded quantification of all the constants involved in terms of $\delta, D, H, N$ and the lower bound on the critical exponent given in terms of $\delta$ and $D$ by Proposition \ref{prop-lower-bound-entropy}.
\end{proof}
%We remark that the non-elementarity of $\Gamma$ is essential to this result. Indeed if one consider $X=\mathbb{R}$ and $\Gamma_\lambda = \lambda \mathbb{Z}$, where $\lambda$ is any positive real number, then $\#\lambda \mathbb{Z} \cap [-n,n] \geq \frac{2n}{\lambda}$ showing that there  is no a uniform upper  bound when $\lambda$ tends to $0$.

We can conclude now the
\begin{proof}[Proof of Theorem \ref{theorem-main}.(ii)]
	Let $\omega$ be a non-principal ultrafilter.
	By Proposition \ref{prop-GH-ultralimit} the triple $(X_\omega,x_\omega, \Gamma_\omega)$ is equivariantly isometric to $(X_\infty, x_\infty, \Gamma_\infty)$. By Theorem \ref{theorem-main}.(i) the triple $(X_\omega,x_\omega, \Gamma_\omega)$ belongs to $\mathcal{M}(\delta,D)$. By Proposition \ref{prop-critical-exponent-covering-entropy} the critical exponent of $\Gamma_\omega$ is finite, so for every $\varepsilon > 0$ there exists $T_\varepsilon \geq 0$ such that if $T\geq T_\varepsilon$ then 
	\begin{equation}
		\label{eq-limit-exponent}
		e^{T(h_{\Gamma_\omega} -\varepsilon)} \leq  \#\Gamma_\omega x_\omega\cap\overline{B}(x_\omega,T) \leq e^{T(h_{\Gamma_\omega} +\varepsilon)},
	\end{equation} 
	by Lemma \ref{entropy-critical}. We fix $K$ as in Theorem \ref{theo-uniform-distribution} and we set $T:= \max\lbrace T_\varepsilon, \frac{\log{K\cdot e}}{\varepsilon}\rbrace$. It is not difficult to show that
	$$\#\Gamma_nx_n\cap\overline{B}(x_n,T-1) \leq \#\Gamma_\omega x_\omega\cap\overline{B}(x_\omega,T) \leq \#\Gamma_nx_n\cap\overline{B}(x_n,T+1)$$
	$\omega$-a.s., so
	\begin{equation}
		\label{eq-limit-condition}
		\frac{1}{K}\cdot e^{-1}\cdot e^{T\cdot h_{\Gamma_n}} \leq \#\Gamma_\omega x_\omega\cap\overline{B}(x_\omega,T)  \leq K \cdot e \cdot e^{T\cdot h_{\Gamma_n}}
	\end{equation}
	$\omega$-a.s. Putting together \eqref{eq-limit-exponent} and \eqref{eq-limit-condition} and using the definition of $T$ we get
	$$h_{\Gamma_n} - 2\varepsilon \leq h_{\Gamma_\omega} \leq h_{\Gamma_n} + 2\varepsilon,$$
	$\omega$-a.s. This means $\omega$-$\lim h_{\Gamma_n} = h_{\Gamma_\omega}$, by definition. This is true for every non-principal ultrafilter, hence the continuity under equivariant pointed Gromov-Hausdorff convergence follows by the following lemma.
\end{proof}

\begin{lemma}
	Let $a_n$ be a bounded sequence of real numbers. 
	\begin{itemize}
		\item[(i)] If $a_{n_j}$ is a subsequence converging to $\tilde{a}$ then there exists a non-principal ultrafilter $\omega$ such that $\omega$-$\lim a_n = \tilde{a}$;
		\item[(ii)] if there exists $a\in \mathbb{R}$ such that $\omega$-$\lim a_n = a$ for every non-principal ultrafilter $\omega$, then $\exists \lim_{n\to +\infty} a_n = a$.
	\end{itemize} 
\end{lemma}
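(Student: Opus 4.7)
The plan is to exploit the correspondence between convergent subsequences of a bounded sequence and ultrafilter limits. The key fact I will use is that a filter containing only infinite sets can be extended to a non-principal ultrafilter, and that the ultrafilter limit of a sequence $(a_n)$ is characterized by the property that $\{n : |a_n - a| < \varepsilon\}$ has $\omega$-measure $1$ for every $\varepsilon > 0$.

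For part (i), I would start from the subsequence $(a_{n_j})$ converging to $\tilde{a}$ and consider the descending family of infinite tail sets $A_k := \{n_j : j \geq k\} \subseteq \mathbb{N}$. This family has the finite intersection property (indeed, it is totally ordered by inclusion) so it generates a proper filter $\mathcal{F}$. Extend $\mathcal{F}$ to an ultrafilter $\omega$ using Zorn's lemma. This $\omega$ is non-principal: if $\omega$ contained a finite set $F$, then $F \cap A_k \in \omega$ for every $k$, but $F \cap A_k = \emptyset$ as soon as the $k$-th index of the subsequence exceeds $\max F$, contradicting that the empty set is never in a proper filter. Finally, for every $\varepsilon > 0$, convergence of the subsequence gives some $k_\varepsilon$ with $A_{k_\varepsilon} \subseteq \{n : |a_n - \tilde{a}| < \varepsilon\}$, so this last set belongs to $\omega$. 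Hence $\omega\text{-}\lim a_n = \tilde{a}$.

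For part (ii), I would argue by contrapositive. Assuming $(a_n)$ does not converge to $a$, there exist $\varepsilon_0 > 0$ and a subsequence $(a_{n_j})$ with $|a_{n_j} - a| \geq \varepsilon_0$ for every $j$. Since $(a_n)$ is bounded, Bolzano–Weierstrass yields a sub-subsequence $(a_{n_{j_k}})$ converging to some $\tilde{a} \in \mathbb{R}$; passing to the limit in the strict inequality gives $|\tilde{a} - a| \geq \varepsilon_0$, in particular $\tilde{a} \neq a$. By part (i) applied to this sub-subsequence, there is a non-principal ultrafilter $\omega$ with $\omega\text{-}\lim a_n = \tilde{a} \neq a$, contradicting the hypothesis.

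The statement is essentially a standard exercise on ultrafilters; no serious obstacle is expected. The only subtle point is the construction of the non-principal ultrafilter in (i), and the argument above handles it directly by forcing the tail sets into $\omega$. The deduction of (ii) from (i) is then a clean application of Bolzano–Weierstrass plus the contrapositive.
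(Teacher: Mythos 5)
Your proof is correct and follows essentially the same strategy as the paper: for (i) the paper likewise obtains a non-principal ultrafilter containing the (infinite) set of subsequence indices and observes that the $\varepsilon$-approximation sets then have full measure, while for (ii) it also reduces to (i) via subsequential limits of the bounded sequence (using the $\liminf$ and $\limsup$ subsequences directly rather than your contrapositive with Bolzano--Weierstrass, a purely cosmetic difference). No gaps.
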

\begin{proof}
	Let us start with (i). The set $\{n_j\}_j$ is infinite, then there exists a non-principal ultrafilter $\omega$ containing $\{n_j\}_j$ (cp. \cite{Jan17}, Lemma 3.2). Moreover for every $\varepsilon > 0$ there exists $j_\varepsilon$ such that for all $j\geq j_\varepsilon$ it holds $\vert a_{n_j} - \tilde{a} \vert < \varepsilon$. The set of indices where the inequality is true belongs to $\omega$ since the complementary is finite. This implies exactly that $\tilde{a} = \omega$-$\lim a_n$.\\
	The proof of (ii) is now a direct consequence: we take subsequences $\lbrace n_j\rbrace_{j\in J}$, $\lbrace n_k\rbrace_{k\in K}$ converging respectively to the limit inferior and limit superior of the sequence. By (i) there are two non-principal ultrafilters $\omega_J$, $\omega_K$ such that $\omega_J$-$\lim a_{n_j} = \liminf_{n\to+\infty}a_n$ and $\omega_K$-$\lim a_{n_k} = \limsup_{n\to+\infty}a_n$. By assumption these two ultralimits coincide, so $\liminf_{n\to+\infty}a_n = \limsup_{n\to+\infty}a_n$.
\end{proof}

\section{Algebraic and equivariant GH-convergence}
\label{subsec-algebraic}
Let $X$ be a proper metric space and $G$ be a topological group. We denote by Act$(G,X)$ the set of homomorphisms $\varphi \colon G \to \text{Isom}(X)$.
\begin{defin}
	Let $\varphi_n,\varphi_\infty \in \textup{Act}(G,X)$. We say $\varphi_n$ converges in the algebraic sense to $\varphi_\infty$ if $\varphi_n$ converges to $\varphi_\infty$ in the compact-open topology. In this case we write $\varphi_n \underset{\textup{alg}}{\longrightarrow} \varphi_\infty$. The compact-open topology is Hausdorff since Isom$(X)$ is, so the algebraic limit is unique, if it exists.\\
	If $G$ has the discrete topology then the algebraic convergence is equivalent to: for every $g\in G$ the isometries $\varphi_n(g)$ converge to $\varphi_\infty(g)$ uniformly on compact subsets of $X$. \\	
	\noindent If $G$ has the discrete topology and is finitely generated by $\lbrace g_1,\ldots,g_\ell\rbrace$ then the algebraic convergence is equivalent to: for every $i=1,\ldots,\ell$ the isometries $\varphi_n(g_i)$ converge to $\varphi_\infty(g_i)$ uniformly on compact subsets of $X$. 
\end{defin}
%In general, given $\varphi\in \text{Act}(G,X)$, the group $\varphi(G)<\text{Isom}(X)$ is not discrete, even when $G$ has the discrete topology, it is finitely generated and $\varphi$ is faithful.
%\begin{ex}
%	We take $G=\mathbb{Z}$, $X= \mathbb{R}/\mathbb{Z}$ and we fix an irrational number $t\in \mathbb{R}$. We consider the homomorphism $\varphi\colon \mathbb{Z} \to \text{Isom}(X)$ defined by $\varphi(k)([x]) = [x + k\cdot t]$. The orbit of every point $[x]\in \mathbb{R}/\mathbb{Z}$ under $\varphi(\mathbb{Z})$ is bounded and contains infinite distinct points, therefore $\varphi(\mathbb{Z})$ is not discrete.
%\end{ex}
The algebraic limit is always contained in the ultralimit group in the following sense.
\begin{prop}
	Let $\varphi_n,\varphi_\infty \in\textup{Act}(G,X)$ and suppose $\varphi_n \underset{\textup{alg}}{\longrightarrow}{\varphi_\infty}$. Let $\omega$ be a non-principal ultrafilter. Then $\varphi_\infty(G) \subseteq (\varphi_n(G))_\omega$, where we use Lemma \ref{lemma-ultralimit-constant} to identify the ultralimit group of the sequence $\varphi_n(G)$ to a group of isometries of $X$.
\end{prop}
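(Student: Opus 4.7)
The plan is to show that for each $g \in G$, the natural constant-type sequence $(\varphi_n(g))_{n\in\mathbb{N}}$ is an admissible sequence of isometries whose ultralimit, under the identification $\iota$ of Lemma \ref{lemma-ultralimit-constant}, is exactly $\varphi_\infty(g)$.

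First I would verify admissibility. Since $\varphi_n \to \varphi_\infty$ in the compact-open topology, the sequence $\varphi_n(g)(x)$ converges to $\varphi_\infty(g)(x)$ in $X$, so it is bounded. Hence there exists $M\geq 0$ with $d(x,\varphi_n(g)x)\leq M$ for every $n$, which is more than enough to conclude that $(\varphi_n(g))$ is admissible with respect to any non-principal ultrafilter $\omega$. Let $g_\omega := \omega\text{-}\lim \varphi_n(g) \in (\varphi_n(G))_\omega$.

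Next I would identify $g_\omega$ with $\iota_*\varphi_\infty(g)$. Fix any $y\in X$ and consider its image $\iota(y) = \omega\text{-}\lim y \in X_\omega$ (the ultralimit of the constant sequence). By definition of the ultralimit isometry,
\[
g_\omega (\iota(y)) \;=\; \omega\text{-}\lim \varphi_n(g)(y).
\]
Because $\varphi_n(g) \to \varphi_\infty(g)$ uniformly on the compact set $\{y\}$, the sequence $\varphi_n(g)(y)$ converges in $X$ to $\varphi_\infty(g)(y)$. The ultralimit of a convergent sequence equals its ordinary limit (since for every $\varepsilon>0$ the set of $n$ where $d(\varphi_n(g)y,\varphi_\infty(g)y)<\varepsilon$ is cofinite, hence has $\omega$-measure one). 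Therefore
\[
g_\omega(\iota(y)) \;=\; \iota(\varphi_\infty(g)(y)) \;=\; \iota_*\varphi_\infty(g)(\iota(y)).
\]
Since $\iota$ is surjective onto $X_\omega$ and $y$ was arbitrary, this shows $g_\omega = \iota_*\varphi_\infty(g)$, i.e.\ $\iota_*\varphi_\infty(g) \in (\varphi_n(G))_\omega$. Running over all $g\in G$ yields $\varphi_\infty(G)\subseteq (\varphi_n(G))_\omega$ under the identification of Lemma \ref{lemma-ultralimit-constant}.

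There is no real obstacle here; the only thing to keep straight is the bookkeeping between $X$ and $X_\omega$ via $\iota$, and the basic fact that ultralimits respect ordinary limits of real-valued or metric-valued bounded sequences. No hypothesis on $G$ beyond being a topological group is needed, since the argument is done one element at a time.
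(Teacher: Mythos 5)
Your proof is correct and follows exactly the route the paper takes: it checks admissibility of the constant-index sequence $(\varphi_n(g))$ and identifies its ultralimit with $\iota_*\varphi_\infty(g)$ using uniform convergence on compact sets. The paper condenses this to ``it is easy to see''; your write-up simply supplies the details of that step.
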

\begin{proof}
	For every $g\in G$ the sequence $\varphi_n(g)$ converges uniformly on compact subsets of $X$ to $\varphi_\infty(g)$ by assumption. It is easy to see that the ultralimit element $\omega$-$\lim \varphi_n(g)$ coincides with $\varphi_\infty(g)$, so $ \varphi_\infty(G) \subseteq (\varphi_n(G))_\omega$.
\end{proof}

In general the inclusion is strict.
\begin{ex}
	\label{ex-algebraic-GH}
	Let $X=\mathbb{R}, G = \mathbb{Z}$ and $\varphi_n \colon \mathbb{Z} \to \textup{Isom}(\mathbb{R})$ defined by sending $1$ to the translation of length $\frac{1}{n}$. Clearly the sequence $\varphi_n$ converges algebraically to the trivial homomorphism. On the other hand $(\varphi_n(\mathbb{Z}))_\omega$ is the group $\Gamma$ of all translations of $\mathbb{R}$ for every non-principal ultrafilter $\omega$.
\end{ex}

However the two limits coincide when restricted to the class $\mathcal{M}(\delta,D)$.

\begin{theo}
	\label{theo-ultralimit-to-algebraic}
	Let $(X,x, \Gamma_n)\in \mathcal{M}(\delta,D)$.
	\begin{itemize}
		\item[(i)] if $(X,x, \Gamma_n) \underset{\textup{eq-pGH}}{\longrightarrow}(X,x, \Gamma_\infty)$ then there exists a group $G$ such that for every $n$ big enough $\Gamma_{n} = \varphi_{n}(G)$, $\Gamma_\infty = \varphi_\infty(G)$ with $\varphi_{n},\varphi_\infty$ isomorphisms and $\varphi_{n} \underset{\textup{alg}}{\longrightarrow} \varphi_\infty$.
		\item[(ii)] Viceversa if there exist a group $G$ and isomorphisms $\varphi_n\colon G \to \Gamma_n$ and if $\varphi_n \underset{\textup{alg}}{\longrightarrow} \varphi_\infty$ then $(X,x,\Gamma_n) \underset{\textup{eq-pGH}}{\longrightarrow} (X,x,\varphi_\infty(G))$.
	\end{itemize}
\end{theo}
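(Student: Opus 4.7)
The plan is to reduce both parts to ultralimit comparisons via Corollary \ref{cor-ultralimit-unique-limit}. Since the underlying pointed space is the fixed pair $(X,x)$, Lemma \ref{lemma-ultralimit-constant} identifies $(X_\omega,x_\omega)$ canonically with $(X,x)$ for every non-principal ultrafilter $\omega$, so the task in each case is to identify the ultralimit group $\Gamma_\omega$ with the prescribed subgroup of $\textup{Isom}(X)$. A uniform systole $s>0$ for $\{\Gamma_n\}$ is available throughout: the properness of the fixed space $X$ together with the codiameter bound $\leq D$ makes $\textup{Pack}_{Y_n}(72\delta+3,1)$ uniformly bounded, whence Proposition \ref{prop-critical-exponent-covering-entropy}, Lemma \ref{lemma-bound-entropy-packing} and Proposition \ref{prop-bound-systole} produce $s$.

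For (i), I take $G=\Gamma_\infty$ and $\varphi_\infty=\textup{id}$. With $R:=2D+72\delta$, Lemma \ref{lemma-generating-set} makes $S:=\Sigma_R(\Gamma_\infty,x)$ a finite generating set of $\Gamma_\infty$, and the quasiconvex-cocompact action on a hyperbolic space yields a finite presentation $\Gamma_\infty=\langle S\mid w_1,\ldots,w_k\rangle$ with relators of length $\leq L$. For $\varepsilon$ small and $n$ large, using the eq-pGH $\varepsilon$-approximation $(f_n,a_n,b_n)$, I set $\varphi_n(g):=b_n(g)$ on $S$ and extend multiplicatively. Iterating the defining inequality of the $\varepsilon$-approximation along each relator $w_j$ shows the product $b_n(g_{i_1})\cdots b_n(g_{i_L})$ moves $x$ by $O(L\varepsilon)$; for $\varepsilon<s/(CL)$ the systole of $\Gamma_n$ forces this product to be $\textup{id}$, so relations are preserved and $\varphi_n$ extends to a homomorphism. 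Injectivity is symmetric (a nontrivial kernel element would produce a small-displacement element in $\Gamma_n$), and surjectivity uses $a_n$ to exhibit preimages for $\Sigma_R(\Gamma_n,x)$. The algebraic convergence $b_n(g)\to g$ (uniformly on compacts) follows from the ultralimit identification of Proposition \ref{prop-GH-ultralimit} applied across all non-principal ultrafilters.

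For (ii), the uniform systole $s$ and a diagonal argument first show that $\varphi_\infty$ is injective and that $\varphi_\infty(G)$ is discrete (hence closed) in $\textup{Isom}(X)$: a nontrivial kernel element, or a sequence $\gamma_k$ with $\varphi_\infty(\gamma_k)\to\textup{id}$, would yield $\varphi_n(\gamma)\in\Gamma_n$ of displacement $<s$ for $n$ large, contradicting the systole. The easy inclusion $\varphi_\infty(G)\subseteq\Gamma_\omega$ is immediate: $(\varphi_n(\gamma))$ is admissible with ultralimit $\varphi_\infty(\gamma)$ by algebraic convergence. For the reverse inclusion I fix a finite generating set $\eta_1,\ldots,\eta_m$ of $G$ (which is finitely generated because $\Gamma_n\cong G$ is, by Lemma \ref{lemma-generating-set}); algebraic convergence gives $d(x,\varphi_n(\eta_i)x)\leq R_0$ uniformly in $n$ for $n$ large. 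Given $g_\omega=\omega$-$\lim g_n\in\Gamma_\omega$ with $d(x,g_nx)\leq M$, write $g_n=\varphi_n(\gamma_n)$. A uniform Svarc-Milnor estimate, applied to the action of $\Gamma_n$ on the $36\delta$-quasiconvex set $Y_n$ (Lemma \ref{lemma-quasigeodesic}) with codiameter $\leq D$ and generating set $\{\varphi_n(\eta_i)\}$ of uniform displacement $\leq R_0$, expresses $g_n$ as a word of length $\leq C_1M+C_2$ in $\{\varphi_n(\eta_i)^{\pm1}\}$ with $C_1,C_2$ depending only on $\delta,D,s,R_0$. Pulling back, $\gamma_n$ belongs to the finite set of words of length $\leq C_1M+C_2$ in $\{\eta_i^{\pm1}\}$, so by pigeonhole $\omega$-a.s.\ $\gamma_n=\gamma^\ast$ is constant, whence $g_\omega=\omega$-$\lim\varphi_n(\gamma^\ast)=\varphi_\infty(\gamma^\ast)\in\varphi_\infty(G)$. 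Therefore $\Gamma_\omega=\varphi_\infty(G)$ for every $\omega$, and Corollary \ref{cor-ultralimit-unique-limit} concludes.

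The main obstacle is the uniform Svarc-Milnor step at the end of (ii): one must verify that the Cayley graphs of $\Gamma_n$ with respect to the varying generating sets $\{\varphi_n(\eta_i)\}$ are uniformly quasi-isometric to $Y_n$, with quasi-isometry constants depending only on $\delta$, $D$, $s$ and the uniform displacement bound $R_0$. The ingredients are classical (codiameter controls orbit density, systole controls packing, hyperbolicity and quasiconvexity control the subdivision of geodesics from $x$ to $g_nx$), but the uniformity in $n$ relies crucially on the fact that the chosen generators $\varphi_n(\eta_i)$ move, along the sequence, to their algebraic limits $\varphi_\infty(\eta_i)$ with bounded displacement, so that all the geometric constants entering the subdivision and word-length estimates are bounded independently of $n$.
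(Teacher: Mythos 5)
Your overall framework (reduce everything to identifying the ultralimit group via Corollary \ref{cor-ultralimit-unique-limit} and Lemma \ref{lemma-ultralimit-constant}, with the uniform systole as the basic tool) is sound, and your easy inclusion $\varphi_\infty(G)\subseteq\Gamma_\omega$ and the discreteness of $\varphi_\infty(G)$ are fine. But the step you yourself flag as ``the main obstacle'' in (ii) is a genuine gap, not a routine verification. Lemma \ref{lemma-word-metric-comparison} expresses an element $g_n$ with $d(x,g_nx)\leq M$ as a word of length $O(M)$ in the \emph{full} set $\Sigma_R(\Gamma_n,x)$ of small-displacement elements; to convert this into a word of uniformly bounded length in your chosen generators $\{\varphi_n(\eta_i)^{\pm1}\}$ you need a bound, uniform in $n$, on the $\{\eta_i\}$-word length of every element of $\varphi_n^{-1}(\Sigma_R(\Gamma_n,x))$. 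Bounded displacement of the $\varphi_n(\eta_i)$ gives the inclusion $\{\varphi_n(\eta_i)\}\subseteq\Sigma_{R_0}(\Gamma_n,x)$, i.e.\ the \emph{upper} bound $d(x,\varphi_n(\gamma)x)\leq R_0\,|\gamma|_{\{\eta_i\}}$, but says nothing about the reverse: a priori $\Sigma_R(\Gamma_n,x)$ could contain elements whose preimages in $G$ have word length tending to infinity with $n$ (algebraic convergence controls $\varphi_n(\gamma)$ only for each \emph{fixed} $\gamma$, not uniformly over preimages that vary with $n$). Ruling this out is precisely the content of the eventual stability of the \emph{marked} isomorphism type, which the paper obtains from the finiteness theorem of \cite{BCGS2} via Proposition \ref{prop-stability-word-metrics}, and which its proof of (ii) imports wholesale by reducing to part (i) plus uniqueness of the algebraic limit. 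Any attempt I see to fill your step (e.g.\ passing to an ultrafilter on a bad subsequence) ends up needing $\Gamma_\omega\subseteq\varphi_\infty(G)$, i.e.\ is circular.

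There is a second, smaller gap in your part (i): your parenthetical justification of injectivity does not work. A nontrivial $g\in\ker\varphi_n$ is sent to the identity, which has zero displacement, so the systole gives no contradiction; the approximation inequality only shows that kernel elements must have $S$-word length at least of order $s/\varepsilon$, which does not exclude them. The standard fix --- and essentially what \cite{BCGS2} does --- is to build the homomorphism in the other direction, $\chi_n\colon\Gamma_n\to\Gamma_\infty$, from a presentation of $\Gamma_n$ with relators of length bounded \emph{uniformly in $n$} (this uniform presentability must itself be argued, e.g.\ via a Rips-complex argument using $\delta$, $D$ and the systole), and then check that $\chi_n\circ\varphi_n$ and $\varphi_n\circ\chi_n$ fix the generating sets, hence are the identity. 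Your construction of $\varphi_n$ on a finite presentation of $\Gamma_\infty$ and the verification that relators die, as well as the surjectivity argument via $a_n$, are correct and are a legitimately more hands-on alternative to the paper's route through Proposition \ref{prop-stability-word-metrics}; but as written both the injectivity in (i) and the word-length uniformity in (ii) are asserted rather than proved, and each requires a substantive additional argument.
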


%\begin{prop}
%	\label{prop-algebraic-to-GH}
%	Let $(X,x)$ be a pointed proper metric space and $\varphi_n, \varphi \in \textup{FD-Act}(G,X)$. If $\varphi_n \underset{\textup{alg}}{\longrightarrow} \varphi$ then $(X,x,\varphi_n(G)) \underset{\textup{eq-pGH}}{\longrightarrow} (X,x,\varphi(G))$.
%\end{prop}

%This example shows also that the converse of Proposition \ref{prop-algebraic-to-GH} does not hold: the groups $\varphi_n(\mathbb{Z})$ converge in the equivariant pointed Gromov-Hausdorff sense to $\Gamma$, but $\Gamma$ cannot be the image of $\mathbb{Z}$ under any homomorphism. 

\noindent Before the proof we need two results. Given a group $\Gamma$ and a finite set of generators $\Sigma$ of $\Gamma$ it is classically defined the word-metric $d_\Sigma$ on $\Gamma$ as 
$$d_\Sigma(g,h) := \inf\lbrace \ell \in \mathbb{N} \text{ s.t. } g =  h \cdot \sigma_1\cdots\sigma_\ell, \text{ where } \sigma_i\in \Sigma\rbrace.$$
By definition $d_\Sigma$ takes value in the set of natural numbers and $d_\Sigma(g,h)=d_\Sigma(h^{-1}g, \text{id})$. The couple $(\Gamma,\Sigma)$ is called a marked group.
\begin{lemma}[Lemma 4.6 of \cite{BCGS2}]
	\label{lemma-word-metric-comparison}
	Let $(X,x,\Gamma)\in\mathcal{M}(\delta,D)$ and let $R > 2D + 72\delta$. Set $\Sigma:=\Sigma_R(\Gamma,x)$  and denote by $d_\Sigma$ the associated word-metric on $\Gamma$ (observe that $\Sigma$ is a generating set of $\Gamma$ by Lemma \ref{lemma-generating-set}). Then 
	$$(R - 2D - 72\delta) \cdot d_\Sigma(g,h) \leq d(gx,hx) \leq  R \cdot d_\Sigma(g,h)$$
	for all $g,h\in \Gamma$.
\end{lemma}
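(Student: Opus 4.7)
The two inequalities split cleanly, and both sides of the statement are left-$\Gamma$-invariant in $(g,h)$ (since $d(gx,hx)=d(x,h^{-1}gx)$ and $d_\Sigma(g,h)=d_\Sigma(h^{-1}g,\textup{id})$ by definition of the word metric on a left-regular Cayley graph). So the first move is to reduce everything to the case $h=\textup{id}$.

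The upper bound $d(gx,x)\leq R\cdot d_\Sigma(g,\textup{id})$ is just the triangle inequality. If $\ell=d_\Sigma(g,\textup{id})$ and $g=\sigma_1\cdots \sigma_\ell$ is a geodesic word in $\Sigma=\Sigma_R(\Gamma,x)$, then the successive orbit points $x,\sigma_1x,\sigma_1\sigma_2x,\dots,gx$ are at consecutive distances $d(\sigma_1\cdots\sigma_{i-1}x,\sigma_1\cdots\sigma_i x)=d(x,\sigma_ix)\leq R$, so $d(x,gx)\leq \ell R$.

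For the lower bound I would mimic the discretization argument in the proof of Lemma \ref{lemma-generating-set}. Set $L=d(x,gx)$ and $r:=R-2D-72\delta>0$, and fix a geodesic segment $[x,gx]$; note that both endpoints belong to $Y=\textup{QC-Hull}(\Lambda(\Gamma))$, since $x\in Y$ by hypothesis and $Y$ is $\Gamma$-invariant. Choose an integer $\ell$ with $\ell \geq L/r$ (as small as possible) and subdivide $[x,gx]$ by consecutive points $x=x_0,x_1,\dots,x_\ell=gx$ at equal spacing $L/\ell\leq r$. By the $36\delta$-quasiconvexity of $Y$ (Lemma \ref{lemma-quasigeodesic}) each $x_i$ lies within $36\delta$ of some $y_i\in Y$, and by cocompactness of $\Gamma\curvearrowright Y$ with codiameter $\leq D$ we can find $h_i\in\Gamma$ with $d(y_i,h_ix)\leq D$; at the endpoints we choose $h_0=\textup{id}$ and $h_\ell=g$ (for which $d(x_i,h_ix)=0$), and for intermediate indices we obtain $d(x_i,h_ix)\leq 36\delta+D$. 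Setting $g_i:=h_{i-1}^{-1}h_i$, the triangle inequality gives
\[
d(x,g_ix)=d(h_{i-1}x,h_ix)\leq 2(36\delta+D)+\tfrac{L}{\ell}\leq 72\delta+2D+r=R,
\]
so every $g_i$ lies in $\Sigma$. Since $g=g_1\cdots g_\ell$, we get $d_\Sigma(g,\textup{id})\leq \ell$, and the choice of $\ell$ translates into the claimed bound $r\cdot d_\Sigma(g,\textup{id})\leq L$.

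The main obstacle in carrying this out is not the geometric estimate itself, which is routine once the three ingredients (quasi-convexity of $Y$, cocompactness with codiameter $D$, and the slack $72\delta+2D$ between $r$ and $R$) are lined up. Rather, the delicate point is the integer accounting in the discretization: one must pick the partition of $[x,gx]$ so that the number of intervals $\ell$ satisfies $\ell\cdot r\leq L$ sharply (rather than merely $\lceil L/r\rceil$), which is arranged by subdividing at the right scale and using that the endpoints $h_0=\textup{id},\, h_\ell=g$ are exact orbit points rather than $(36\delta+D)$-approximations.
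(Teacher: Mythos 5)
Your reduction to $h=\mathrm{id}$, the triangle-inequality upper bound, and the discretization of $[x,gx]$ via the $36\delta$-quasiconvexity of $Y$ and the $D$-cocompactness are exactly the paper's argument, and that part is sound. The genuine gap is the final integer accounting, which you correctly single out as the delicate point but do not actually close. With $\ell=\lceil L/r\rceil$ (where $r=R-2D-72\delta$) the construction yields only $d_\Sigma(g,\mathrm{id})\leq\lceil L/r\rceil$, hence $r\,(d_\Sigma(g,\mathrm{id})-1)\leq L$ and not $r\,d_\Sigma(g,\mathrm{id})\leq L$. If instead you force $\ell\leq L/r$, the spacing becomes $L/\ell\geq r$ and the \emph{interior} generators satisfy only $d(x,g_ix)\leq 72\delta+2D+L/\ell$, which can exceed $R$; the exactness of the endpoints $h_0=\mathrm{id}$, $h_\ell=g$ buys slack only for $g_1$ and $g_\ell$, not for the middle letters. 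In fact no bookkeeping can rescue the multiplicative form: since $\Sigma_{2D+72\delta}(\Gamma,x)$ generates the non-elementary group $\Gamma$, there exists $g\neq\mathrm{id}$ with $d(x,gx)\leq 2D+72\delta$; for any $R>2(2D+72\delta)$ this $g$ belongs to $\Sigma$, so $d_\Sigma(g,\mathrm{id})=1$ while $d(x,gx)<r$, and the claimed lower bound fails outright.

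To be fair, the paper's own proof asserts at the same spot that $\ell\leq\frac{d(x,gx)}{R-2D-72\delta}$, whereas the subdivision it describes ($\ell-1$ intervals of length exactly $r$ plus a final one of length $\leq r$) gives $\ell\geq\frac{d(x,gx)}{R-2D-72\delta}$; so you have faithfully reproduced the argument together with its soft spot. What the discretization honestly proves is the additive-error form $(R-2D-72\delta)\,(d_\Sigma(g,h)-1)\leq d(gx,hx)\leq R\, d_\Sigma(g,h)$, equivalently $d_\Sigma(g,h)\leq\frac{d(gx,hx)}{R-2D-72\delta}+1$, and this weaker quasi-isometry statement is all that is needed where the lemma is invoked (e.g.\ in Proposition \ref{prop-stability-word-metrics}, where it only serves to show that bounded displacement and bounded word length are equivalent). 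I would state and prove that version rather than try to remove the extra generator.
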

\begin{proof}
	It is enough to check the inequalities for $g\in \Gamma$ and $h=\text{id}$. We write $g=\sigma_1\cdots\sigma_\ell$, with $\sigma_i \in \Sigma$, $\ell = d_\Sigma(g,\text{id})$. The right inequality follows by the triangle inequality on $X$, indeed $d(gx,x)\leq \ell\cdot R$.\\
	We now take consecutive points $x_i$, $i=0,\ldots,\ell$, along a geodesic segment $[x,gx]$ with $x_0=x$, $x_\ell = gx$, $d(x_{i-1},x_{i}) = R-2D-72\delta$ for $i=1,\ldots,\ell -1$ and $d(x_{\ell -1}, gx) \leq R-2D-72\delta$. This implies $\ell \leq \frac{d(x,gx)}{R-2D-72\delta}$. By Lemma \ref{lemma-quasigeodesic} and by cocompactness we can find an element $g_i \in \Gamma$ such that $d(x_i,g_ix)\leq 36\delta + D$ for every $i=0,\ldots,\ell$. We choose $g_0=\text{id}$ and $g_\ell = g$. Clearly $d(g_ix, g_{i-1}x) \leq R$ for every $i=1,\ldots,\ell$. This shows that $\sigma_i = g_{i-1}^{-1}g_i \in \Sigma$. Moreover $g = \sigma_1\cdots \sigma_\ell$, i.e. $d_{\Sigma}(g,\text{id}) \leq \ell \leq \frac{d(x,gx)}{R-2D-72\delta}$.
\end{proof}

In the following proposition we explicit the metric in the pointed Gromov-Hausdorff convergence in order to make it more understable.

\begin{prop}
	\label{prop-stability-word-metrics}
	In the standard setting of convergence let $R$ be a real number satisfying
	\begin{itemize}
		\item[(i)] $2D+72\delta < R \leq 2D+72\delta +1$;
		\item[(ii)] for every $g\in \Gamma_\infty$ such that $d(x_\infty,gx_\infty)\leq R$ then $d(x_\infty, gx_\infty) < R$.
	\end{itemize}
	Let $\Sigma_n := \Sigma_R(\Gamma_n,x_n)$ and $\Sigma_\infty = \Sigma_R(\Gamma_\infty, x_\infty)$ be generating sets of $\Gamma_n$ and $\Gamma_\infty$ respectively (by Lemma \ref{lemma-generating-set}). Equip $\Gamma_n$ and $\Gamma_\infty$ with the word-metrics $d_{\Sigma_n}$, $d_{\Sigma_\infty}$ respectively. Then $(\Gamma_n, d_{\Sigma_n}, \textup{id}) \underset{\textup{pGH}}{\longrightarrow} (\Gamma_\infty, d_{\Sigma_\infty}, \textup{id}).$
\end{prop}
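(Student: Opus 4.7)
The plan is to argue through a fixed non-principal ultrafilter $\omega$ and then conclude ordinary convergence by the subsequence argument of Corollary \ref{cor-ultralimit-unique-limit}. By Proposition \ref{prop-GH-ultralimit} we identify $(X_\omega, x_\omega, \Gamma_\omega) \cong (X_\infty, x_\infty, \Gamma_\infty)$. Corollary \ref{cor-entropy-bound-convergence} and Proposition \ref{prop-bound-systole} give a uniform $s>0$ with $\textup{sys}(\Gamma_n, X_n)\geq s$ for every $n$, and the same bound passes to $\Gamma_\omega$ as in the proof of Corollary \ref{cor-discrete-limit}. Assumption (ii) together with the discreteness of $\Gamma_\infty$ furnishes $\alpha>0$ such that no element of $\Gamma_\infty$ has displacement at $x_\infty$ in $[R-\alpha, R+\alpha]$; in particular $\Sigma_\infty = \Sigma_{R+\alpha}(\Gamma_\infty, x_\infty)$.

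The first key step is a canonical $\omega$-a.s.\ bijection $\Sigma_n \leftrightarrow \Sigma_\infty$, given by $g_n \mapsto \omega$-$\lim g_n$. Indeed any admissible $g_n\in \Sigma_n$ has $d(x_\omega, g_\omega x_\omega)\leq R$, so by the gap $g_\omega\in \Sigma_\infty$. Conversely, every $g\in\Sigma_\infty$ comes via $\Gamma_\infty\cong\Gamma_\omega$ from an admissible sequence $g_n$ with $\omega$-$\lim d(x_n, g_n x_n)\leq R-\alpha$, hence $g_n\in\Sigma_n$ $\omega$-a.s. Distinct limits yield $\omega$-a.s.\ distinct sequences by the systole bound, and $\#\Sigma_n$ is uniformly bounded by Lemma \ref{lemma-uniform-packing-convergence}; so the natural map is an $\omega$-a.s.\ bijection.

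Next I would extend this to a bijective word-isometry $\Phi_n \colon B_{d_{\Sigma_n}}(\textup{id}, N)\to B_{d_{\Sigma_\infty}}(\textup{id}, N)$ for each fixed $N$. Given $h_n = \sigma_{n,1}\cdots\sigma_{n,\ell_n}$ with $\ell_n\leq N$ and $\sigma_{n,i}\in\Sigma_n$, the tuple $(\ell_n, j_{n,1}, \ldots, j_{n,\ell_n})$, where $j_{n,i}$ labels $\sigma_{n,i}$ inside $\Sigma_n$, lives in a fixed finite set and is therefore $\omega$-a.s.\ constant; Lemma \ref{composition} then yields $h_\omega = \omega$-$\lim h_n \in B_{d_{\Sigma_\infty}}(\textup{id}, N)$, so $\Phi_n(h_n):=h_\omega$ is well-defined. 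The inverse is built symmetrically by lifting any $\Sigma_\infty$-decomposition $g=\tau_1\cdots\tau_\ell$ to the product $\tau_{n,1}\cdots\tau_{n,\ell}$ in $\Gamma_n$ through the bijection of the previous step; uniqueness modulo $\omega$-a.s.\ equality, and the injectivity of $\Phi_n$, both follow from the systole. Word distances match because any minimal $\Sigma_\infty$-factorization of $(h_\omega')^{-1} h_\omega$ lifts to a $\Sigma_n$-factorization of $(h_n')^{-1}h_n$ of the same length, and conversely.

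Since word metrics are integer-valued, a bijective distance-preserving map of balls of word-radius $N$ is automatically an $\varepsilon$-approximation for every $\varepsilon>0$ with $1/\varepsilon\leq N$. Hence for every fixed $N$ and every non-principal ultrafilter we obtain the pGH condition $\omega$-a.s., and the subsequence argument used in Corollary \ref{cor-ultralimit-unique-limit} upgrades this to the genuine convergence $(\Gamma_n, d_{\Sigma_n}, \textup{id})\underset{\textup{pGH}}{\longrightarrow}(\Gamma_\infty, d_{\Sigma_\infty}, \textup{id})$. The main technical obstacle is the $n$-dependence of the word decompositions, which is tamed by the fact that all relevant combinatorial data live in fixed finite sets, so that $\omega$ selects coherent choices uniformly across $n$.
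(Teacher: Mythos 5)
Your proposal is correct and follows essentially the same route as the paper: reduce to a fixed non-principal ultrafilter via Proposition \ref{prop-GH-ultralimit} and Corollary \ref{cor-ultralimit-unique-limit}, use the uniform systole bound and the displacement gap at $R$ coming from hypothesis (ii) to match $\Sigma_n$ with $\Sigma_\infty$ $\omega$-a.s., and transfer word factorizations in both directions. The only (cosmetic) difference is that you assemble explicit bijective isometries of word-metric $N$-balls, whereas the paper packages the same computation as an isometry between the ultralimit of $(\Gamma_n,d_{\Sigma_n},\textup{id})$ and $(\Gamma_\omega,d_{\Sigma_\omega},\textup{id})$.
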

\begin{proof}
	We fix a non-principal ultrafilter $\omega$, we take the ultralimit triple $(X_\omega,x_\omega, \Gamma_\omega)$ and we set $\Sigma_\omega := \Sigma_R(\Gamma_\omega,x_\omega)$. $(X_\omega, x_\omega, \Gamma_\omega)$ is equivariantly isometric to $(X_\infty,x_\infty,\Gamma_\infty)$ by Proposition \ref{prop-GH-ultralimit}, so $(\Gamma_\omega, d_{\Sigma_\omega}, \text{id})$ is isometric to $(\Gamma_\infty, d_{\Sigma_\infty}, \text{id})$, and they are proper. If we show that the ultralimit of the sequence of spaces $(\Gamma_n, d_{\Sigma_n}, \text{id})$ is isometric to $(\Gamma_\omega, d_{\Sigma_\omega}, \text{id})$ we conclude by Corollary \ref{cor-ultralimit-unique-limit} that $(\Gamma_n, d_{\Sigma_n}, \textup{id}) \underset{\textup{pGH}}{\longrightarrow} (\Gamma_\infty, d_{\Sigma_\infty}, \textup{id})$. \\
	We denote by $\omega$-$\lim (\Gamma_n, d_{\Sigma_n}, \text{id})$ the ultralimit space of this sequence. Observe that each element is represented by a sequence $(g_n)$ with $g_n\in \Gamma_n$ and $d_{\Sigma_n}(\text{id},g_n) \leq M$ $\omega$-a.s., for some $M$. We define $\Phi \colon \omega$-$\lim (\Gamma_n, d_{\Sigma_n}, \text{id}) \to (\Gamma_\omega, d_{\Sigma_\omega}, \text{id})$ by sending a point $(g_n)$ to the isometry of $\Gamma_\omega$ defined by $\omega$-$\lim g_n$. We have to show that $\Phi$ is well defined. First of all the condition $d_{\Sigma_n}(\text{id},g_n) \leq M$ implies $d(x_n,g_nx_n)\leq R\cdot M$ $\omega$-a.s. by Lemma \ref{lemma-word-metric-comparison}, so the sequence $(g_n)$ is admissible and defines a limit isometry belonging to $\Gamma_\omega$. Suppose $(h_n)$ is another sequence of isometries of $\Gamma_n$ such that $\omega$-$\lim d_{\Sigma_n}(g_n,h_n) = 0$. Then $d_{\Sigma_n}(g_n,h_n) < 1$ $\omega$-a.s, thus $d_{\Sigma_n}(g_n,h_n) = 0$ $\omega$-a.s., i.e. $g_n = h_n$ $\omega$-a.s. This shows that the map $\Phi$ is well defined.\\
	It remains to show it is an isometry. Let $(g_n), (h_n) \in \omega$-$\lim (\Gamma_n, d_{\Sigma_n}, \text{id})$ and set $\ell = \omega$-$\lim d_{\Sigma_n}(g_n,h_n)$. Since any word-metric takes values in $\mathbb{N}$ we get $d_{\Sigma_n}(g_n,h_n) = \ell$ $\omega$-a.s. For these indices we can write $g_n = h_n \cdot a_n^1 \cdots a_n^\ell$ with $a_n^1,\ldots,a_n^\ell \in \Sigma_n$. The sequence of isometries $(a_n^i)$ are admissible by definition, so they define elements $a_\omega^i \in \Sigma_\omega$. We have $g_\omega = h_\omega \cdot a_\omega^1 \cdots a_\omega^\ell$, showing that $d_{\Sigma_\omega}(g_\omega,h_\omega) \leq \ell = \omega$-$\lim d_{\Sigma_n}(g_n,h_n)$. \\
	We now take isometries $g_\omega =\omega$-$\lim g_n$, $h_\omega = \omega$-$\lim h_n$ of $\Gamma_\omega$. By definition $d(x_n,g_nx_n) \leq M$ and $d(x_n,h_nx_n) \leq M$ $\omega$-a.s., for some $M$. By Lemma \ref{lemma-word-metric-comparison} we get $d_{\Sigma_n}(g_n,\text{id}) \leq M'$ and $d_{\Sigma_n}(h_n,\text{id}) \leq M'$ $\omega$-a.s., for $M' = \frac{M}{R-2D-72\delta}$. Therefore the sequences $(g_n),(h_n)$ defines point of $\omega$-$\lim (\Gamma_n, d_{\Sigma_n}, \text{id})$. Observe that this shows also that $\Phi$ is surjective. We set $d_{\Sigma_\omega}(g_\omega,h_\omega) = \ell$. Then we can write $g_\omega = h_\omega \cdot a_\omega^1\cdots a_\omega^\ell$, for some $a_\omega^i = \omega$-$\lim a_n^i \in \Sigma_\omega$. This means that $d(x_\omega, a_\omega^i x_\omega) \leq R$, so $d(x_\omega, a_\omega^i x_\omega) < R$ by our assumptions on $R$. Therefore the following finite set of conditions hold $\omega$-a.s.: $d(x_n, a_n^i x_n) \leq R$ for every $i=1,\ldots,\ell$, i.e. $a_n^i \in \Sigma_n$ $\omega$-a.s. Now observe that if $g_n \neq h_n\cdot a_n^1\cdots a_n^\ell =: b_n$ $\omega$-a.s. then $d(g_nx_n,b_nx_n)\geq s > 0$ $\omega$-a.s. Indeed by Corollary \ref{cor-entropy-bound-convergence} and Proposition \ref{prop-bound-systole} it is enough to take $s$ smaller than a uniform lower bound of the systole of all the groups $\Gamma_n$. Hence we get $d(g_\omega x_\omega, b_\omega x_\omega) > 0$ which is clearly false. This shows that $\omega$-$\lim d_{\Sigma_n}(g_n,h_n)\leq \ell$, i.e. $d_{\Sigma_\omega}(g_\omega, h_\omega) \geq \omega$-$\lim d_{\Sigma_n}(g_n,h_n)$. Therefore $\Phi$ is an isometry.
\end{proof}

\begin{proof}[Proof of Theorem \ref{theo-ultralimit-to-algebraic}.]
	We prove first (i). We can always find $R$ as in the assumptions of Proposition \ref{prop-stability-word-metrics} since $\Gamma_\infty$ is discrete by Corollary \ref{cor-discrete-limit}. So, with the same notation of Proposition \ref{prop-stability-word-metrics}, $(\Gamma_n, d_{\Sigma_n}, \textup{id}) \underset{\textup{pGH}}{\longrightarrow} (\Gamma_\infty, d_{\Sigma_\infty}, \textup{id}).$ Applying word by word the proof of Theorem 4.4 of \cite{BCGS2}, using Lemma \ref{lemma-word-metric-comparison} instead of Lemma 4.6 therein, we get only a finite number of isomorphic types of the marked groups $(\Gamma_n, \Sigma_n)$. This implies that for $n$ big enough all the marked groups $(\Gamma_n, \Sigma_n)$ are pairwise isomorphic, and in particular isomorphic to $(\Gamma_\infty, \Sigma_\infty)$. We set $G=\Gamma_\infty$, $\varphi_\infty = \text{id}$ and $\varphi_n'$ one fixed marked isomorphism between $(\Gamma_\infty, \Sigma_\infty)$ and $(\Gamma_n, \Sigma_n)$. By Corollary \ref{cor-entropy-bound-convergence} and Proposition \ref{prop-bound-systole} we can find $s>0$ such that sys$(\Gamma_n,X)\geq 2s$ for every $n$.	
	By definition of equivariant pointed Gromov-Hausdorff convergence for each element $g\in \Sigma_\infty$ there exists $g_n\in \Gamma_n$ such that $d(g_n x, gx) < s$, if $n$ is big enough. By the condition on the systole we deduce that $g_n$ is unique. Finally by the definition of $R$, if $n$ is taken maybe bigger, every such $g_n$ belongs to $\Sigma_n$. Clearly this correspondance $g \mapsto g_n$ is one-to-one. This means that there exists a permutation $\mathcal{P}_n$ of the set $\Sigma_n$ such that $\varphi_n = \mathcal{P}_n \circ \varphi_n'$ is again a marked isomorphism between $(\Gamma_\infty, \Sigma_\infty)$ and $(\Gamma_n,\Sigma_n)$ such that $\varphi_n(g)=g_n$.
	It is now trivial to show that $\varphi_n(g)$ converges uniformly on compact subsets of $X$ to $g$ for every $g\in \Sigma_\infty$, and so that $\varphi_n$ converges algebraically to $\varphi_\infty$.

	We show now (ii). By Corollary \ref{cor-ultralimit-unique-limit} it is enough to show that $\Gamma_\omega = \varphi_\infty(G)$ for every non-principal ultrafilter $\omega$, where $\Gamma_\omega$ is the ultralimit group of the sequence $\Gamma_n$. Here we are using Lemma \ref{lemma-ultralimit-constant} to identify the ultralimit group of the sequence $(X,x,\Gamma_n)$ to a group of isometries of $X$.	
	By Proposition \ref{prop-GH-ultralimit} we know that there is a subsequence $\lbrace n_k \rbrace$ such that $(X,x, \Gamma_{n_k}) \underset{\textup{eq-pGH}}{\longrightarrow}(X,x, \Gamma_\omega)$ because $X$ is proper. Therefore by the first part of the theorem there exists a homomorphism $\psi \colon G \to \text{Isom}(X)$ with $\psi(G)=\Gamma_\omega$ and $\varphi_{n_k}\underset{\text{alg}}{\longrightarrow} \psi$. So $\psi = \varphi_\infty$ by the uniqueness of the algebraic limit. This shows that $\Gamma_\omega = \varphi_\infty(G)$ and concludes the proof.
\end{proof}
We observe that the first part of the argument above shows
\begin{cor}
	\label{cor-isomorphic-constant}
	In the standard setting of convergence the groups $\Gamma_n$ are eventually isomorphic to $\Gamma_\infty$.
\end{cor}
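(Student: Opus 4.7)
The plan is to run essentially the first half of the proof of Theorem \ref{theo-ultralimit-to-algebraic}.(i), observing that once we have Proposition \ref{prop-stability-word-metrics} at our disposal, the argument never used the assumption that the underlying metric space was the same for every $n$. Thus the general standard setting of convergence presents no new difficulty.

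Concretely, I would first choose a radius $R$ with $2D+72\delta < R \leq 2D+72\delta+1$ such that no nontrivial element of $\Gamma_\infty$ has displacement at $x_\infty$ exactly equal to $R$; this is possible because $\Gamma_\infty$ is discrete by Corollary \ref{cor-discrete-limit}, so the displacement function at $x_\infty$ assumes only countably many values. With this $R$, set $\Sigma_n := \Sigma_R(\Gamma_n,x_n)$ and $\Sigma_\infty := \Sigma_R(\Gamma_\infty, x_\infty)$; by Lemma \ref{lemma-generating-set} these are finite generating sets. Proposition \ref{prop-stability-word-metrics} then yields $(\Gamma_n, d_{\Sigma_n}, \textup{id}) \underset{\textup{pGH}}{\longrightarrow} (\Gamma_\infty, d_{\Sigma_\infty}, \textup{id})$.

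Next, using Corollary \ref{cor-entropy-bound-convergence} to bound the critical exponents uniformly and Proposition \ref{prop-bound-systole} to get a uniform positive lower bound on $\textup{sys}(\Gamma_n, X_n)$, together with Lemma \ref{lemma-word-metric-comparison} (which uniformly compares the word metric $d_{\Sigma_n}$ with the orbit metric), I would reproduce word for word the argument of Theorem 4.4 of \cite{BCGS2}. Because the word metrics $d_{\Sigma_n}$ are integer-valued, pointed Gromov--Hausdorff convergence forces the balls of any fixed radius $r$ in $(\Gamma_n, d_{\Sigma_n}, \textup{id})$ to eventually be isometric to the ball of radius $r$ in $(\Gamma_\infty, d_{\Sigma_\infty}, \textup{id})$ as marked pointed sets. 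This produces only finitely many isomorphism classes of marked groups $(\Gamma_n, \Sigma_n)$ along the sequence, so for $n$ large enough all of them are isomorphic to $(\Gamma_\infty, \Sigma_\infty)$ as marked groups; in particular $\Gamma_n \cong \Gamma_\infty$.

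The main obstacle, such as it is, is checking that nothing in the cited argument genuinely requires $X_n = X_\infty$: the role played by a fixed ambient space in Theorem \ref{theo-ultralimit-to-algebraic} was only to allow us afterwards to speak of algebraic convergence of the isomorphisms $\varphi_n$, which is exactly what the corollary \emph{drops}. All ingredients actually used up to the point of producing an isomorphism — the discreteness and torsion-freeness of $\Gamma_\infty$, the uniform systole bound, Lemma \ref{lemma-generating-set}, Lemma \ref{lemma-word-metric-comparison}, and Proposition \ref{prop-stability-word-metrics} — are available verbatim in the standard setting of convergence, so the argument ports without modification.
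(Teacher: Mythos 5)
Your proof is correct and follows the same route as the paper: the paper derives this corollary precisely by observing that the first half of the proof of Theorem \ref{theo-ultralimit-to-algebraic}.(i) --- choosing $R$ via discreteness of $\Gamma_\infty$, applying Proposition \ref{prop-stability-word-metrics}, and running the finiteness argument of Theorem 4.4 of \cite{BCGS2} with Lemma \ref{lemma-word-metric-comparison} --- uses nothing that requires the spaces $X_n$ to coincide. Your explicit check of that last point is exactly the (unstated) content of the paper's one-line justification.
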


\section{Examples}
\label{subsec-counterexamples}

We show that each assumption on the class $\mathcal{M}(\delta,D)$ is necessary in order to have the discreteness of the limit group.
\begin{ex}[Non-elementarity of the group.] We take $X_n=\mathbb{R}$, $x_n=0$ and $\Gamma_n = \mathbb{Z}_{\frac{1}{n}}$, the group generated by the translation of length $\frac{1}{n}$. It is easy to show that $(X_\omega, x_\omega)=(\mathbb{R},0)$ and $\Gamma_\omega$ is the group of all translations of $\mathbb{R}$, for every non-principal ultrafilter $\omega$. Clearly $\Gamma_\omega$, and therefore any possible limit under equivariant pointed Gromov-Hausdorff convergence, is not discrete. Observe that each $X_n$ is a proper, geodesic, $0$-hyperbolic metric space and each $\Gamma_n$ is discrete, torsion-free and cocompact with codiameter $\leq \frac{1}{n}$.
\end{ex}
\begin{ex}[Non-uniform bound on the diameter.] For every $n$ we take a genus $2$ hyperbolic surface with systole equal to $ \frac{1}{n}$. Its fundamental group acts cocompactly on $X_n=\mathbb{H}^2$ as a subgroup $\Gamma_n$ of isometries. We take a basepoint $x_n \in \mathbb{H}^2$ which belongs to the axis of an isometry of $\Gamma_n$ with translation length $\frac{1}{n}$. As in the example above $\Gamma_\omega$ contains all the possible translations along an axis of $X_\omega = \mathbb{H}^2$ and so it is not discrete, for every non-principal ultrafilter $\omega$. Observe that each $X_n$ is a proper, geodesic, $\log 3$-hyperbolic metric space and each $\Gamma_n$ is discrete, non-elementary, torsion-free and cocompact. However the codiameter of $\Gamma_n$ is not uniformly bounded above.	
\end{ex}
\begin{ex}[Groups with torsion]
	Let $Y$ be the wedge of a hyperbolic surface of genus two $S$ and a sphere $\mathbb{S}^2$ and $X$ be its universal cover, which is Gromov-hyperbolic. Denote by $G_n$ the group of isometries of $Y$ generated by the isometry that fixes $S$ and acts as a rotation of angle $\frac{2\pi}{n}$ on $\mathbb{S}^2$ fixing the wedging point. Let $\Gamma_n$ be the covering group of $G_n$ acting on $X_n := X$ by isometries. The action of $\Gamma_n$ is clearly discrete with bounded codiameter. However $\Gamma_\omega$ is not discrete.
\end{ex}

\begin{ex}[Gromov-hyperbolicity]
	Let $X=\mathbb{R}^2$, $x = (0,0)$ and $\Gamma_n$ be the cocompact, discrete, torsion-free group generated by the translations of vectors $(\frac{1}{n},0)$ and $(0,1)$. It is clear that $\Gamma_\omega$ is not discrete for every non-principal ultrafilter $\omega$.
%	Let $(\Gamma,\Sigma)$ be a non-cyclic, torsion-free, Gromov-hyperbolic group of infinite cardinality. This means that the Cayley graph of $\Gamma$ with respect to $\Sigma$ is Gromov-hyperbolic. Let $\lbrace g_n \rbrace_{n\in\mathbb{Z}}$ be elements of $\Gamma\setminus \Sigma$ such that $g_{-n} = g_n^{-1}$. For every $n$ the set $\Sigma_n = \Sigma\cup \lbrace g_{\pm 1}, \ldots, g_{\pm n}\rbrace$ is a finite, symmetric generating set of $\Gamma$. Let $X_n$ be the Cayley graph of $\Gamma$ with respect to $\Sigma_n$. It is classical that $X_n$ is still Gromov-hyperbolic and $\Gamma$ acts on $X_n$ by isometries. The action is discrete, non-elementary and with codiameter $1$. Take $x_n = \text{id}$ as basepoint of $X_n$. Then $\#\Gamma x_n \cap \overline{B}(x_n,1) = \#\Sigma + 2n$, for every $n$.
\end{ex}

%Finally we present examples and counterexamples to the continuity of the critical exponent under equivariant pointed Gromov-Hausdorff convergence. If the limit group is not discrete we cannot have continuity in general, so we give examples in case the limit group is discrete.
%
%\begin{ex}[Non-uniform diameter]
%	For every $n$ we set $Y_n$ to be the wedge of two $\mathbb{S}^1$ of length $n$ by one point and $X_n$ to be its universal cover. 
%\end{ex}	
%\begin{ex}[Elementarity]
%	If all the groups $\Gamma_n$ are elementary and the limit group $\Gamma_\infty$ is discrete, then it is elementary. The critical exponent of every elementary group is $0$, so in this case the continuity is trivial.
%\end{ex}
%\begin{ex}[Torsion]
%	If all the groups $\Gamma_n$ have torsion and $\Gamma_\infty$ is discrete then 
%\end{ex}
%\begin{ex}[Hyperbolicity constant]
%	QUESTO ANCORA NON LO SO.
%\end{ex}

		\bibliographystyle{alpha}
		\bibliography{Continuity_of_critical_exponent_new}

\begin{thebibliography}{BCGS21}

\bibitem[BCGS17]{BCGS}
G.~Besson, G.~Courtois, S.~Gallot, and A.~Sambusetti.
\newblock Curvature-free margulis lemma for gromov-hyperbolic spaces.
\newblock {\em arXiv preprint arXiv:1712.08386}, 2017.

\bibitem[BCGS21]{BCGS2}
Gérard Besson, Gilles Courtois, Sylvestre Gallot, and Andrea Sambusetti.
\newblock Finiteness theorems for gromov-hyperbolic spaces and groups.
\newblock {\em arXiv preprint arXiv:2109.13025}, 2021.

\bibitem[BH13]{BH09}
M.~Bridson and A.~Haefliger.
\newblock {\em Metric spaces of non-positive curvature}, volume 319.
\newblock Springer Science \& Business Media, 2013.

\bibitem[BJ97]{BJ97}
C.~Bishop and P.~Jones.
\newblock Hausdorff dimension and kleinian groups.
\newblock {\em Acta Mathematica}, 179(1):1--39, 1997.

\bibitem[BL12]{BL12}
A.~Bartels and W.~Lück.
\newblock Geodesic flow for $\mathrm{CAT}(0)$–groups.
\newblock {\em Geom. Topol.}, 16(3):1345--1391, 2012.

\bibitem[Cav21a]{Cav21}
N.~Cavallucci.
\newblock Entropies of non positively curved metric spaces.
\newblock {\em arXiv preprint arXiv:2102.07502}, 2021.

\bibitem[Cav21b]{Cav21bis}
Nicola Cavallucci.
\newblock Topological entropy of the geodesic flow of non-positively curved
  metric spaces.
\newblock {\em arXiv preprint arXiv:2105.11774}, 2021.

\bibitem[CDP90]{CDP90}
M.~Coornaert, T.~Delzant, and A.~Papadopoulos.
\newblock {\em G{\'e}om{\'e}trie et th{\'e}orie des groupes: les groupes
  hyperboliques de Gromov}.
\newblock Lecture notes in mathematics. Springer-Verlag, 1990.

\bibitem[Coo93]{Coo93}
M.~Coornaert.
\newblock Mesures de patterson-sullivan sur le bord d'un espace hyperbolique au
  sens de gromov.
\newblock {\em Pacific J. Math.}, 159(2):241--270, 1993.

\bibitem[CS20]{CavS20bis}
N.~Cavallucci and A.~Sambusetti.
\newblock Discrete groups of packed, non-positively curved, gromov hyperbolic
  metric spaces.
\newblock {\em arXiv preprint arXiv:2102.09829}, 2020.

\bibitem[CS21]{CavS20}
Nicola Cavallucci and Andrea Sambusetti.
\newblock Packing and doubling in metric spaces with curvature bounded above.
\newblock {\em Mathematische Zeitschrift}, pages 1--46, 2021.

\bibitem[DK18]{DK18}
Cornelia Dru{\c{t}}u and Michael Kapovich.
\newblock {\em Geometric group theory}, volume~63.
\newblock American Mathematical Soc., 2018.

\bibitem[DSU17]{DSU17}
T.~Das, D.~Simmons, and M.~Urba{\'n}ski.
\newblock {\em Geometry and dynamics in Gromov hyperbolic metric spaces},
  volume 218.
\newblock American Mathematical Soc., 2017.

\bibitem[Fuk86]{Fuk86}
K.~Fukaya.
\newblock Theory of convergence for riemannian orbifolds.
\newblock {\em Japanese journal of mathematics. New series}, 12(1):121--160,
  1986.

\bibitem[Gro81]{Gr81}
M.~Gromov.
\newblock Groups of polynomial growth and expanding maps (with an appendix by
  jacques tits).
\newblock {\em Publications Math\'ematiques de l'IH\'ES}, 53:53--78, 1981.

\bibitem[Her16]{Her16}
David~A Herron.
\newblock Gromov--hausdorff distance for pointed metric spaces.
\newblock {\em The Journal of Analysis}, 24(1):1--38, 2016.

\bibitem[Jan17]{Jan17}
D.~Jansen.
\newblock Notes on pointed gromov-hausdorff convergence.
\newblock {\em arXiv preprint arXiv:1703.09595}, 2017.

\bibitem[Kel17]{Kel17}
John~L Kelley.
\newblock {\em General topology}.
\newblock Courier Dover Publications, 2017.

\bibitem[Pau96]{Pau96}
F.~Paulin.
\newblock Un groupe hyperbolique est déterminé par son bord.
\newblock {\em Journal of the London Mathematical Society}, 54(1):50--74, 1996.

\bibitem[Pau97]{Pau97}
F.~Paulin.
\newblock On the critical exponent of a discrete group of hyperbolic
  isometries.
\newblock {\em Differential Geometry and its Applications}, 7(3):231--236,
  1997.

\end{thebibliography}
		
	\end{document}